\tikzset{snake it/.style={decorate, decoration=snake}}
\theoremstyle{plain}
\newtheorem{theorem}{Theorem}[section]
\crefname{theorem}{Theorem}{Theorems}
\newtheorem{proposition}[theorem]{Proposition}
\crefname{proposition}{Proposition}{Propositions}
\newtheorem{corollary}[theorem]{Corollary}
\crefname{corollary}{Corollary}{Corollaries}
\newtheorem{lemma}[theorem]{Lemma}
\crefname{lemma}{Lemma}{Lemmas}
\crefname{conjecture}{Conjecture}{Conjectures}
\crefname{problem}{Problem}{Problem}
\newtheorem{claim}[theorem]{Claim}
\crefname{claim}{Claim}{Claims}
\crefname{observation}{Observation}{Observations}
\crefname{setup}{Setup}{Setups}
\crefname{fact}{Fact}{Facts}
\crefname{algorithm}{Algorithm}{Algorithms}
\crefname{remark}{Remark}{Remarks}
\crefname{example}{Example}{Examples}
\theoremstyle{definition}
\newtheorem{definition}[theorem]{Definition}
\crefname{definition}{Definition}{Definitions}
\crefname{construction}{Construction}{Constructions}
\crefname{question}{Question}{Questions}
\numberwithin{equation}{section}
\newcommand{\itref}[1]{\emph{\ref{#1}}}
\def\eps{\varepsilon}
\renewcommand{\int}[1]{\mathop{\mkern 0mu\mathrm{int}}\nolimits(#1)}
\newcommand\ceil[1]{\left\lceil#1\right\rceil}
\newcommand\floor[1]{\left\lfloor#1\right\rfloor}
\newcommand\E{\mathbb{E}}
\renewcommand\P{\mathbb{P}}
\newcounter{propcounter}
\definecolor{DarkDesaturatedBlue}{HTML}{3A3556}
\definecolor{VividOrange}{HTML}{F15918}
\definecolor{PureOrange}{HTML}{FFBA00}
\definecolor{LightGrayishPink}{HTML}{EEC5D5}
\definecolor{VerySoftBlue}{HTML}{B5AFDB}
\title{Ramsey numbers of bounded degree trees versus general graphs}
\author{Richard Montgomery\thanks{Mathematics Institute, University of Warwick, Coventry CV4 7AL, UK. richard.montgomery@warwick.ac.uk}
\and Mat\'ias Pavez-Sign\'e\thanks{Centro de Modelamiento Matem\'atico (CNRS IRL2807), Universidad de Chile, Santiago, Chile. mpavez@dim.uchile.cl} \and Jun Yan\thanks{
Mathematics Institute, University of Warwick, Coventry CV4 7AL, UK. jun.yan@warwick.ac.uk.
\newline \vspace{-0.2cm}
\newline RM and MPS supported by the European
Research Council (ERC) under the European Union Horizon 2020 research and innovation programme (grant agreement No. 947978) and the Leverhulme Trust.
\newline \vspace{-0.2cm}
\newline JY supported by the Warwick Mathematics Institute Centre for Doctoral Training, and by funding from the UK EPSRC (Grant number: EP/W523793/1)}}
\date{}
\begin{document}
	\maketitle
	\begin{abstract}
For every $k\ge 2$ and $\Delta$, we prove that there exists a constant $C_{\Delta,k}$ such that the following holds. For every graph $H$ with $\chi(H)=k$ and every tree with at least $C_{\Delta,k}|H|$ vertices and maximum degree at most $\Delta$, the Ramsey number $R(T,H)$ is $(k-1)(|T|-1)+\sigma(H)$, where $\sigma(H)$ is the size of a smallest colour class across all proper $k$-colourings of $H$. This is tight up to the value of $C_{\Delta,k}$, and confirms a conjecture of Balla, Pokrovskiy, and Sudakov.
\end{abstract}

\section{Introduction}\label{sec:intro}
Given two graphs $G$ and $H$, the \textit{Ramsey number} $R(G,H)$ is defined as the smallest integer $N$ such that every red/blue colouring of the edges of the complete graph $K_N$ contains either a red copy of $G$ or a blue copy of $H$. The fundamental result of Ramsey~\cite{Ramsey} implies that $R(G,H)$ is well-defined for any pair of graphs $G$ and $H$. The exact value of $R(G,H)$ is known for very few pairs of graphs $(G,H)$, and in general it is difficult even to give good bounds on $R(G,H)$. Of the few exact Ramsey numbers known, many share the same general extremal lower bound construction given below. The area of \emph{Ramsey goodness} studies the graphs $G$ and $H$ for which this lower bound is tight.

Erd\H{o}s showed in 1947 that the Ramsey number of an $n$-vertex path $P_n$ and a complete $m$-vertex graph $K_m$ is $R(P_n,K_m)=(m-1)(n-1)+1$. Here, the lower bound construction is the disjoint union of $m-1$ red $(n-1)$-vertex cliques with all possible edges in blue added between them. As Chv\'atal and Harary~\cite{ChvatalHarary} observed, this construction contains no connected red $n$-vertex subgraph and no blue subgraph with chromatic number at least $m$; if $G$ is connected, we therefore have $R(G,H)\ge (\chi(H)-1)(|G|-1)+1$.
Let $\sigma(H)$ denote the size of a smallest colour class across all proper $\chi(H)$-colourings of $H$. If $2\leq \sigma(H)\leq |G|$, then this construction can be improved by adding further a red clique of order $\sigma(H)-1$ connected to the rest of the construction with blue edges. This observation, made by Burr~\cite{Burr81}, implies that for every connected graph $G$ with $|G|\geq\sigma(H)$ we have
    \begin{equation}\label{equation:ramseygood}
    R(G,H)\geq(|G|-1)(\chi(H)-1)+\sigma(H).
    \end{equation}
As coined by Burr and Erd\H os~\cite{BurrErdos} in 1983, we say that a graph $G$ is \emph{$H$-good} if~\eqref{equation:ramseygood} holds with equality, that is, if $R(G,H)=(|G|-1)(\chi(H)-1)+\sigma(H)$. A family of graphs $\mathcal G$ is said to be \emph{$H$-good} if there is some $n_0$ such that every graph $G\in \mathcal G$ is $H$-good if $|G|\ge n_0$. Chv\'{a}tal~\cite{chvatal} had already shown in 1977 that the family of trees is $K_r$-good for all $r$. Inspired by this, Burr and Erd\H os sought to determine which families $\mathcal G$ are $K_r$-good for all $r$. They showed that, for each fixed $b$, the family of graphs with \textit{bandwith} at most $b$ is $K_r$-good for all $r$, and raised a series of questions concerning the $K_r$-goodness of various natural families~\cite{BurrErdos}.

In 1985, Erd\H os, Faudree, Rousseau, and Schelp~\cite{erdHos1985multipartite} showed that the family of bounded degree trees is $H$-good for any graph $H$, where some (possibly much weaker) restriction on the maximum degree is needed (see~\cite{BURR198879}).
All but one of the Burr-Erd\H os questions raised in~\cite{BurrErdos} were solved by Nikiforov and Rousseau~\cite{nikiforov2009ramsey} in 2009, mostly as a result of their proof of the $K_r$-goodness for each $r$ of any family of graphs with constant maximum degree which have a `small' set of vertices whose removal divides the graph into small linear-sized components (see~\cite{nikiforov2009ramsey} for a more precise statement). The last question in \cite{BurrErdos} was solved by Fiz Pontiveros, Griffiths, Morris, Saxton, and Skokan~\cite{fiz2014ramsey} in 2014, who proved that the $n$-dimensional hypercube $Q_n$ is $H$-good if $n$ is sufficiently large. Beyond these questions, Allen, Brightwell, and Skokan~\cite{allen2013ramsey} showed in 2013 that the family of graphs $G$ with constant maximum degree and bandwith $o(|G|)$ is $H$-good for every graph $H$.

More recently, focus has been turned on quantitative considerations of the Ramsey goodness problem. That is, when $\mathcal{G}$ is an $H$-good family, how large does $n_0$ need to be before every $G\in \mathcal{G}$ is $H$-good if $|G|\geq n_0$.
As their result quoted above shows that for any graph $H$, the path $P_n$ and the $n$-vertex cycle $C_n$ are $H$-good if $n$ is sufficiently large depending on $H$, Allen, Brightwell, and Skokan~\cite{allen2013ramsey} conjectured that $n\ge \chi(H)|H|$ should be sufficient. In the case of paths, this was proved in a strong form when $\chi(H)\geq 4$ by Pokrovskiy and Sudakov~\cite{pokrovskiy2017ramsey}, who showed that, if $n\geq 4|H|$, then $P_n$ is $H$-good. 
For cycles, Pokrovskiy and Sudakov~\cite{pokrovskiy2020ramsey} later showed that $C_n$ is $H$-good as long as $n\ge 10^{60}|H|$ and $\sigma(H)\ge \chi(H)^{22}$. This result was recently improved by Haslegrave, Hyde, Kim, and Liu~\cite{haslegrave2021ramsey}, who showed that, for some universal $C>0$, $n\ge C|H|\log^4\chi(H)$ is sufficient for the cycle $C_n$ to be $H$-good, which is optimal up to the logarithmic factor of $\chi(H)$ and confirms the conjecture of Allen, Brightwell, and Skokan for graphs with large enough chromatic number.

Though not mentioned explicitly, quantitative bounds can be taken more generally for the Ramsey goodness of bounded degree trees from the proof of Erd\H os, Faudree, Rousseau, and Schelp~\cite{erdHos1985multipartite} mentioned above. That is, the proof can be used to show that, for each $k$ and $\Delta$, there is some $C_{\Delta,k}$ such that for each graph $H$ with $\chi(H)=k$ and each $n\geq C_{\Delta,k}|H|^4$, every $n$-vertex tree $T$ with maximum degree at most $\Delta$ is $H$-good.
This was improved by Balla, Pokrovskiy, and Sudakov~\cite{balla2018} in 2018, who proved that $n\ge C_{\Delta,k}|H|\log^4|H|$ suffices, for some appropriate $C_{\Delta,k}$. Furthermore, they showed the factor $\log^4|H|$ is not needed when $T$ has $\Omega(n)$ leaves, and conjectured this should be true for every tree, that is, that $n\geq C_{\Delta,k}|H|$ should suffice for $T$ to be $H$-good.
The purpose of this paper is to confirm this conjecture, as follows.
\begin{theorem}\label{theorem:main:2}
	For all $\Delta$ and $k$, there exists a constant $C_{\Delta,k}$ such that the following holds. Given a graph $H$ with $\chi(H)=k$ and $n\ge C_{\Delta,k}|H|$, every $n$-vertex tree $T$ with maximum degree at most $\Delta$ is $H$-good. In other words, $R(T,H)=(k-1)(n-1)+\sigma(H)$.
\end{theorem}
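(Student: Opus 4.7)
The lower bound $R(T,H)\ge (k-1)(n-1)+\sigma(H)$ is Burr's construction~\eqref{equation:ramseygood}. For the matching upper bound, set $N=(k-1)(n-1)+\sigma(H)$, take an arbitrary red/blue colouring of $K_N$, assume there is no blue copy of $H$, and seek a red copy of $T$. Throughout, $\varepsilon,\ell$ are constants depending only on $\Delta,k$ chosen at the end.

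\textbf{Reduction to trees with few leaves.} The quoted result of Balla, Pokrovskiy, and Sudakov~\cite{balla2018} already gives the conclusion when $T$ has at least $\varepsilon n$ leaves, so I would assume $T$ has fewer than $\varepsilon n$ leaves. Since $T$ has maximum degree at most $\Delta$, a standard counting argument then guarantees that $T$ contains $\Omega(n/\ell)$ vertex-disjoint bare paths (internal vertices of degree $2$) of length $\ell$. Contracting these paths yields a ``skeleton'' tree $T^\circ$ with $|T^\circ|=O(n/\ell)$; by choosing $\ell$ large we make $|T^\circ|$ an arbitrarily small fraction of $n$. So $T$ decomposes as a small skeleton plus many very flexible bare-path ``absorbers''.

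\textbf{Structural dichotomy for the red graph.} Since the blue graph $G_B$ contains no copy of $H$, a stability-type analysis (\`a la Erd\H os--Simonovits, together with the refined argument needed to track $\sigma(H)$) yields a dichotomy. In the \emph{non-extremal} case, $G_R$ has strictly more edges than $\binom{N}{2}-\mathrm{ex}(N,H)$, and a bounded-degree embedding lemma (of the kind used in~\cite{erdHos1985multipartite,balla2018}) produces a red $T$ directly. In the \emph{extremal} case, there is a partition $V(K_N)=U_1\cup\cdots\cup U_{k-1}\cup W$ with $|U_i|\approx n$, $|W|\approx\sigma(H)$, such that each $U_i$ is close to a red clique, between-$U_i$ edges are mostly blue, and $W$ has controlled red degree into the $U_i$'s --- in other words, the colouring resembles Burr's extremal construction, except with $\sigma(H)$ ``extra'' vertices.

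\textbf{Embedding and main obstacle.} In the extremal case, since $T$ is connected and inter-part edges are predominantly blue, the red copy of $T$ must lie essentially inside a single $U_i$, but $|U_i|$ is only slightly more than $n-1$, so the embedding is tight. I would first embed the skeleton $T^\circ$ greedily into $U_i$ (using that $|T^\circ|\ll n$ leaves large slack), then replace each contracted edge by a vertex-disjoint red bare path of length $\ell$ routed through the leftover vertices of $U_i$, absorbing the $\sigma(H)$ extra vertices into these bare paths via $W$. The crux is a \emph{robust red bare-path connection lemma}: given prescribed pairs $(a_j,b_j)$ in a near-red-clique of size barely more than the number of vertices needed, find vertex-disjoint red $a_j$--$b_j$ paths of length exactly $\ell$. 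Proving this in the near-extremal regime is the main obstacle, as it must respect the exact count $N=(k-1)(n-1)+\sigma(H)$ rather than an asymptotic bound, and must handle the absorption of $W$ into a single $U_i$'s embedding. I expect this to combine expansion/matching arguments inside $U_i$ with a careful stability analysis using how bare paths can simultaneously serve as connectors and absorbers.
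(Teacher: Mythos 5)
Your high-level strategy (Burr's lower bound, reduce to trees with few leaves, stability dichotomy for the red graph, then embed a skeleton and absorb the remaining vertices into bare paths) is a reasonable and genuinely different route from the paper, which instead reduces to $H = K^{k-1}_{\mu n}\times K^c_m$, inducts on $k$, and in the base case $k=2$ embeds $T$ into an $(n+m-1)$-vertex graph whose complement is $K_{m,\mu n}$-free via a ``vortex'' (iterative absorption into a nested sequence of random sets of geometrically decreasing size). However, your proposal contains a genuine gap, and it is exactly where you flag the ``main obstacle.'' After the stability analysis, you are left with the tight embedding problem: a set $U_i$ of size barely more than $n-1$ in which you must find a spanning red $T$, simultaneously absorbing roughly $\sigma(H)$ extra vertices from $W$ into bare paths. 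You correctly observe that this must respect the exact count, but you offer only the expectation that ``expansion/matching arguments inside $U_i$ with a careful stability analysis'' will work. This is not a small missing detail; it is the heart of the theorem. The difficulty is precisely that the spare room is $O(\sigma(H))$, which can range from $1$ to $\mu n$, so the standard Friedman--Pippenger/Haxell embedding (which needs $\Theta(\Delta\mu n)$ spare vertices) does not close the gap, and a ``robust bare-path connection lemma'' in a graph with only $O(m)$ excess vertices over the tree you want to embed does not follow from any off-the-shelf tool. The paper's solution -- passing to a random subset $V$ of size $\Theta(m)$ inside which the joinedness rescales to $K_{\lambda m,\lambda m}$-freeness, then iterating this in a vortex partition and covering all leftover vertices at each level -- is the key new idea that your proposal does not supply or replace.

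A second, smaller issue: your stability step for general $H$ that ``tracks $\sigma(H)$'' is itself nontrivial. The paper sidesteps it entirely by noting that avoiding a blue $H$ is implied by avoiding a blue $K^{k-1}_{\mu n}\times K^c_m$, which converts the hypothesis into the clean quantitative condition that $G_R$'s complement is $K_{m,\mu n}$-free (equivalently, $G_R$ is $(m,\mu n)$-joined). This reduction is what makes the exact bookkeeping manageable; a stability statement for arbitrary $H$ that is sharp enough to recover $\sigma(H)$ exactly, rather than up to $o(n)$, would itself require an argument comparable in difficulty to what the paper proves.
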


Theorem~\ref{theorem:main:2} is tight up to the value of $C_{\Delta,k}$, as it is easy to see that $R(G,H)\geq |H|$ for any non-empty graphs $G$ and $H$, so $G$ is not $H$-good if $|G|<|H|/k$, and hence $C_{\Delta,k}=\Omega(1/k)$.
However, without much more difficulty, it can be seen that we must have $C_{\Delta,k}\geq \Delta/100k\log \Delta$ for sufficiently large $\Delta$, by taking $m$ to be sufficiently large, letting $H$ be the $k$-partite complete graph with $m$ vertices in each class, and letting $T$ be any tree with maximum degree $\Delta$ on $n=\Delta m/100\log \Delta$ vertices. To see this, let $N=\Delta m/10\log \Delta$, and take a blue complete $\lfloor k/3\rfloor$-partite graph with $N$ vertices in each class, and colour the edges within each class red/blue so that the maximum red degree is at most $\Delta-1$ and there is no blue copy of $K_{m,m}$. This colouring is possible using the probabilistic method, more precisely, colouring edges of $K_{2N}$ red independently at random with probability $\Delta/4N$ and removing vertices with at least $\Delta$ red neighbouring edges. Call the final red/blue coloured graph $G$.
If there is a blue copy of $H$ in $G$, then at least $3m$ vertices from this copy are within one class of $N$ vertices in the original partition of $G$, and hence the colouring within this class must contain a blue copy of $K_{m,m}$, a contradiction. As $G$ has maximum red degree at most $\Delta-1$, it contains no red copy of $T$, and therefore $R(T,H)>\lfloor k/3\rfloor N >kn$. Thus, $T$ is not $H$-good, so we must have $C_{\Delta,k}\geq |T|/|H|\geq \Delta/100k\log \Delta$.
We have not optimised the value of $C_{\Delta,k}$ in Theorem~\ref{theorem:main:2} as our proof would give an upper bound on $C_{\Delta,k}$ that is very far from this $\Delta/100k\log\Delta$ lower bound.



\section{Preliminaries}\label{sec:prelim}
After covering the basic notation that we use, we will give a detailed sketch of the proof of Theorem~\ref{theorem:main:1} in Section~\ref{sec:sketch} before outlining the rest of the paper in Section~\ref{sec:overview}.


\subsection{Notation}
A graph $G$ has vertex set $V(G)$ and edge set $E(G)$, and we write $|G|=|V(G)|$ and $e(G)=|E(G)|$. A vertex $v\in V(G)$ has neighbourhood $N(v)$ and degree $d(v)=|N(v)|$. Given a vertex set $S\subset V(G)$, the set of neighbours of $S$ (including those in $S$) is $N'(S)=\cup_{s\in S}N(s)$ and its external neighbourhood is $N(S)=\cup_{s\in S}N(s)\setminus S$. For a vertex $v\in V(G)$ and sets $U,S\subset V(G)$, we write $d(v,U)=|N(v)\cap U|$ and $N(S,U)=N(S)\cap U$. Given a subset $S\subset V(G)$, $G[S]$ is the subgraph of $G$ induced on $S$, with vertex set $S$ and every edge of $G$ contained within $S$. We write $G-S$ for the graph $G[V(G)\setminus S]$. For a subset $X\subset V(G)$,  $I(X)$ denotes the subgraph of $G$ with vertex set $X$ and no edges. The complement of $G$, denoted $G^c$, is the graph with  $V(G^c)=V(G)$ and such that $xy$ forms an edge in $G^c$ if and only if $xy$ is not an edge in $G$.

Given two subgraphs $H_1,H_2\subset G$, $H_1\cup H_2$ is the subgraph with vertex set $V(H_1)\cup V(H_2)$ and edge set $E(H_1)\cup E(H_2)$. If $H\subset G$ is a subgraph and $e=xy\in E(G)$, then $H+e$ denotes the subgraph with vertex set $V(H)\cup\{x,y\}$ and edge set $E(H)\cup \{xy\}$. Given $x,y\in V(G)$, an $x,y$-path is a path with endvertices $x$ and $y$. A path $P$ with $\ell$ vertices has length $\ell-1$. We use $H+P$ to denote $H\cup P$, and $H-P$ to denote the graph resulting from removing the edges of $P$ from $H$ and any internal vertex of $P$ which is now isolated. For subsets $S_1,S_2\subset V(G)$, an $S_1,S_2$-path is a path with one endpoint in $S_1$ and the other endpoint in $S_2$.

For a positive integer $n\in\mathbb N$, we write $[n]=\{1,\ldots,n\}$ and $[n]_0=[n]\cup \{0\}$. We will use the standard hierarchy notation, that is, for $a,b\in (0,1]$, we will write $a\ll b$ to mean that there exists a non-decreasing function $f:(0,1]\to (0,1]$ such that if $a\le f(b)$ then the rest of the proof holds with $a$ and $b$. Hierarchies with more constants are defined in a similar way
and are to be read from the right to the left. For simplicity we will ignore floor and ceiling signs whenever this does not affect the argument.


\subsection{Outline of the proof of Theorem~\ref{theorem:main:1}}\label{sec:sketch}
Let $H$ have chromatic number $k$ such that $|H|\leq \mu n$. Let $m=\sigma(H)$. For convenience, if not elegance, sake, we will use $K^{k-1}_{\mu n}\times K^c_m$ to denote the complete $k$-partite graph whose first $k-1$ classes have size $\mu n$ and whose last class has size $m$. Thus, if a graph $G$ contains a copy of $K^{k-1}_{\mu n}\times K^c_m$, then it contains a copy of $H$, so that, to prove Theorem~\ref{theorem:main:2} it is enough to prove the following (where we also take $\mu_{\Delta,k}=1/C_{\Delta,k}$).
\begin{restatable}{theorem}{maintheorem}\label{theorem:main:1}
	For all $\Delta$ and $k$, there exists a constant $\mu_{\Delta,k}$ such that the following holds for each $m\leq \mu_{\Delta,k}n$. Every tree $T$ with $n$ vertices and maximum degree at most $\Delta$ satisfies $R(T,K^{k-1}_{\mu_{\Delta,k}n}\times K_m^c)=(k-1)(n-1)+m$.
\end{restatable}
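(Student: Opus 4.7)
We proceed by induction on $k\ge 2$, the lower bound following from Burr's construction. Given a $2$-coloured $K_N$ with $N=(k-1)(n-1)+m$ and no red copy of $T$, we aim to locate $k$ pairwise disjoint vertex sets $V_1,\ldots,V_{k-1},V_k$ with $|V_i|=\mu n$ for $i<k$ and $|V_k|=m$, such that every edge between distinct parts is blue and $V_k$ is a red clique; this exhibits the desired blue copy of $K^{k-1}_{\mu n}\times K^c_m$. The first ingredient is a robust tree-embedding lemma: any subgraph of the red graph with at least $n$ vertices and minimum degree at least $(1-\eps)n$ contains $T$ (for $n$ large in terms of $\Delta$ and $\eps$), extending the easy greedy threshold of $n-1$ by exploiting $\Delta(T)\le\Delta$ via absorption.

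\textbf{Cluster extraction and case split.} Run an iterative extraction on the red graph to partition $V(K_N)=C_0\sqcup C_1\sqcup\cdots\sqcup C_s$ where each $C_i$ ($i\ge 1$) has $|C_i|\ge \mu n$, is nearly complete in red, and every typical vertex of $C_i$ has few red edges outside $C_i$, while $C_0$ is small. If $s<k$, then by pigeonhole some cluster has at least $n$ vertices and, by its near-completeness in red, contains red $T$ by the embedding lemma --- a contradiction; hence $s\ge k$. Pick $k-1$ of the clusters and prune from each the vertices that have a red edge to another chosen cluster (few in number, by the clean-boundary property) to obtain subsets $V_1,\ldots,V_{k-1}$ of size $\mu n$ with only blue edges between distinct parts. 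From one remaining cluster extract a red $K_m$ --- feasible since the cluster has at least $\mu n\ge m$ vertices and misses few red edges --- and prune further to ensure no red edges from $V_k$ to $V_1,\ldots,V_{k-1}$; the slack in cluster sizes accommodates all these prunings.

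\textbf{Main obstacle.} The delicate point is to design the cluster extraction so that three competing thresholds (minimum cluster size, internal red density, external red sparseness) are simultaneously achievable and match the tree-embedding threshold. The sharpness of the resulting dichotomy --- $s<k$ forcing red $T$, $s\ge k$ forcing blue $K^{k-1}_{\mu n}\times K^c_m$ --- is what removes the $\log^4|H|$ factor of Balla, Pokrovskiy, and Sudakov and reaches $n\ge C_{\Delta,k}|H|$. Handling the exceptional set $C_0$ (redistributing its vertices among the $V_i$'s without polluting cluster boundaries), and strengthening the tree-embedding lemma to work inside near-complete but not exactly complete clusters, are the most technical components; a carefully designed absorber will likely be needed so that the irregularities of $C_0$ and of cluster interiors can both be absorbed into a single $T$-embedding attempt.
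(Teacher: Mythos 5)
The critical unjustified step is the cluster extraction. You take as given that the red graph can be partitioned into a small exceptional set $C_0$ together with clusters $C_1,\dots,C_s$ that are each nearly complete in red and have few red edges to the rest, but nothing in the hypotheses forces any such structure. For $k=2$ the assumption that the blue graph has no $K_{m,\mu n}$ is exactly the statement that the red graph is $(m,\mu n)$-joined (every $m$-set has fewer than $\mu n$ common non-neighbours), and this does not imply a near-clique decomposition. A pseudorandom graph $G(N,1/2)$ on $N=n+m-1$ vertices is $(m,\mu n)$-joined once $m$ is somewhat larger than $\log(1/\mu)$, yet with high probability every vertex set of size at least $\mu n$ has internal red density roughly $1/2$, so there is no near-complete cluster to extract; your dichotomy never gets started, even though this particular graph certainly does contain $T$. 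For $k\ge 3$ the constraint on the red graph is even weaker, so the issue is worse. To make your route work you would first need a stability-type structure theorem characterising joined graphs with no copy of $T$, which is essentially as hard as the theorem itself, and previous attempts along such lines (regularity/stability methods of Nikiforov--Rousseau or Allen--Brightwell--Skokan) lose polynomial factors in $|H|$, which is precisely what this theorem needs to avoid.

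The paper takes a very different route and never attempts such a decomposition. For $k=2$ it embeds $T$ directly into any $(m,\mu n)$-joined graph on $n+m-1$ vertices: after first trimming to gain a $(D,m)$-expansion property (Proposition~\ref{prop:expander:subgraphnew}), it uses the Friedman--Pippenger/Haxell extendability machinery to build the tree through a nested random ``vortex'' $U_0\supset U_1\supset\cdots\supset U_\ell$ (Lemma~\ref{lemma:vortex:partition} together with Lemma~\ref{lemma:iterativeembed}); the vortex is what allows the embedding to terminate with only $m-1$ spare vertices rather than $\Theta(\mu n)$, which is the crucial saving over earlier work. For $k\ge 3$ the paper inducts on $k$ with a case split on the structure of $T$ (many or few leaves) and of $G$ (well-connected or not), none of which corresponds to a pigeonhole on a cluster count. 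Your intuition is essentially a description of the extremal colouring, but extremal-free colourings need not resemble it, and the argument has to handle them too.
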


We will prove Theorem~\ref{theorem:main:1} by induction on $k$, and therefore the proof falls into two parts, the base case ($k=2$) and the inductive step. This follows the work of Balla, Pokrovskiy, and Sudakov~\cite{balla2018}, who proved Theorem~\ref{theorem:main:1} under a stronger assumption equivalent to replacing $\mu_{\Delta,k}n$ with $\mu_{\Delta,k}n/\log^4n$ throughout. The critical case for making our improvement is the case $k=2$, and we concentrate on this in this sketch, before outlining briefly how the case for $k\geq 3$ follows by induction.

For the case $k=2$, it is sufficient to prove the following for some $\mu=\mu(\Delta)$. That, for every $n$-vertex tree $T$ with maximum degree $\Delta$ and all $m\leq \mu n$, the following holds.
\stepcounter{propcounter}
\begin{enumerate}[label = \textbf{\Alph{propcounter}}]
\item If $G$ has $n+m-1$ vertices, and the complement of $G$ is $K_{m,\mu n}$-free, then $G$ contains a copy of $T$.\label{situation}
\end{enumerate}
An $(n+m-1)$-vertex graph $G$ whose complement is $K_{m,\mu n}$-free satisfies the natural expansion condition that, for each set $U$ of $m$ vertices, $|N_G(U)|\geq |G|-|U|-\mu n\geq (1-2\mu)n$.
Beginning with work by Friedman and Pippenger~\cite{FP1987}, through a key development by Haxell~\cite{haxell}, trees can be embedded into graphs satisfying expansion conditions (see, for example, Corollary~\ref{corollary:extendable:embedding}).
A typical such application of these methods would give the following.

\stepcounter{propcounter}
\begin{enumerate}[label = \textbf{\Alph{propcounter}}]
\item If every set $U$ of $m$ vertices in a graph $G$ satisfies $|N(U)|\geq |G|-2\mu n$, then a tree $T$ with maximum degree $\Delta$ can be embedded in $G$ if $G$ has at least $20\Delta\cdot \max\{\mu n,m\}$ more vertices than $T$.\label{principle}
\end{enumerate}
Our trouble, then, is the gulf between \ref{situation} and \ref{principle}: we only have $m-1$ more vertices in $G$ than $T$ in \ref{situation}, far from the $20\Delta\cdot \mu n$ spare vertices required to use \ref{principle}, particularly as $m$ can be any value between 1 and $\mu n$. Critically, we use the following observation for our graph $G$ whose complement is $K_{m,\mu n}$-free. That, for some large constant $K$, if we take a set $V\subset V(G)$ of $Km$ vertices at random, then,
any set $U\subset V(G)$ of $m$ vertices has at most $\mu n$ non-neighbours in $G$, so can expect to have at most $2\mu Km$ non-neighbours in $V$. With a little care, then, we can show, with high probability, that every set $U\subset V$ with $|U|=m$ satisfies $|N_{G[V]}(U)|\geq (1-3\mu)|V|$. Then, in $G[V]$, we can apply \ref{principle} to embed a tree with maximum degree $\Delta$ using only $O(\Delta\cdot \max\{\mu |V|,m\})=O(\Delta\cdot \max\{\mu Km,m\})$ spare vertices. When we embed using $m-1$ spare vertices (as at \ref{situation}), this is still not enough, but for small $m$ we are much closer, needing $O(m)$ spare vertices instead of $O(n)$ spare vertices. With more analysis, and choosing the constants involved carefully, we can show that, for some small $\lambda$, $G[V]$ here is likely to be $K_{\lambda m,\lambda m}$-free, which will allow us to use \ref{principle} to embed a tree with maximum degree $\Delta$ into $G[V]$ while using only $m-1$ spare vertices.

This key idea for finishing the embedding leads to the following strategy. We take a nested sequence of random subsets $V(G)=U_0\supset U_1\supset\ldots\supset U_\ell$ with geometrically-decreasing size, before embedding $T$ in $\ell$ stages, at each stage $i\in [\ell]$ using all unused vertices in $U_{i-1}\setminus U_{i}$ and as few vertices in $U_i$ as possible, until we finish by embedding the last part of $T$ within $G[U_{\ell}]$. 
This approach is inspired by the `cover down methods' in the iterative absorption techniques used by the first author with Glock, K\"uhn, Lo, and Osthus~\cite{glock2019decomposition} (developing earlier work by Barber, K\"uhn, Lo, and Osthus~\cite{barber2016edge}), and similarly we also call our nested sequence of sets a \emph{vortex}.


Between this sketch and our implementation, there are several complications. We cannot take a simple vortex and must run a cleaning process on an initial random vortex. The `vortex' embedding method also requires $m$ to be at least some large constant; when it is not we must use a separate (though much easier) embedding. Finally, before doing anything, we need to efficiently gain an expansion property for small sets (i.e., those with fewer than $m$ vertices). As in previous work (in particular,~\cite{balla2018}), we can remove a small set of vertices (effectively some small maximal set which does not expand) to gain this property, but removing these vertices gives us fewer `spare vertices' to work with than the $m-1$ we need. If we end up with ${m}'-1$ spare vertices, then it will not be hard to show that sets with size ${m}'$ expand significantly, but this change in the value of $m$ needs to be dealt with carefully, which we do in Section~\ref{sec:subd}.


Finally, we note that, while some of the tree embeddings we use are quite intricate, there exist very good methods to carry them out in a flexible fashion using expansion properties. More specifically, we use an `extendability' method for embedding trees that combines the inductive embedding of trees in expanding graphs by Haxell~\cite{haxell} (developing work by Friedman and Pippenger~\cite{FP1987}) with a `roll-back' idea by Johannsen (see~\cite{draganic2022rolling}), as used by Glebov, Johannsen, and Krivelevich~\cite{Glebov2013}, and Draganic, Krivelevich, and Nenadov~\cite{draganic2022rolling}. For embedding part of the tree into our vortex while making sure we cover all of the unused vertices (as discussed above) we use results of the first author from~\cite{montgomery2019}, which we develop into the form we require in Section~\ref{sec:embedtocover}.

\medskip

\noindent \textbf{Case $k\ge 3$.} Having proved the $k=2$ case, we will then proceed by induction on $k$. To prove Theorem~\ref{theorem:main:1} for $k$, we will have a graph $G$ with $(k-1)(n-1)+m$ vertices and an $n$-vertex tree $T$ with maximum degree $\Delta$, and $m\leq \mu n$ for some small $\mu>0$, and look to show that either $G$ contains a copy of $T$ or $G^c$ contains a copy of $K^{k-1}_{\mu n}\times K^c_m$. We proceed differently according to three cases, \textbf{a)}--\textbf{c)}, the proof of which are carried out in Sections~\ref{sec:manyleaves}--\ref{sec:fewleaveswellconnected} where a more detailed proof sketch for each case can be found.

\smallskip

\textbf{a)} In this case, $T$ has linearly (in $n$) many leaves, and we follow a method of Balla, Pokrovskiy, and Sudakov~\cite{balla2018}.

\smallskip

\textbf{b)} In this case, $G$ is not well connected in the sense that there is a small set $V_0$ such that $G-V_0$ has two large disjoint parts. We can assume (using induction) that the subgraphs of $G$ induced on these large parts both satisfy an expansion condition. If there is a vertex with $\Delta$ neighbours in each part then we can embed part of the tree $T$ in each part, connected together appropriately by this vertex. Where there is no such vertex, we partition $V(G)$ into two large sets which have few edges between them in $G$. If $G$ has no copy of $T$, then we can apply induction to both subgraphs induced by the sets of this partition to find complete partite graphs in their complement, which we combine before removing some vertices in the larger classes in order to find our desired complete partite graph in $G^c$.

\smallskip

\textbf{c)} In the final case, $G$ is well-connected and $T$ has few leaves. The embedding we use here is the most complicated one in the inductive step. Effectively, though, we use expansion conditions to embed $T$ with many bare paths (i.e., paths which no branching vertices in $T$) removed, which exist as $T$ has few leaves, before using a results on the Ramsey goodness of paths to find paths that we then  connect into the embedding of $T$ using the connectivity property. This sketch serves more to motivate our eventual embedding, which cannot follow this sketch closely, but will be discussed in full in Section~\ref{sec:fewleaveswellconnected}.

\subsection{Organisation of the paper}\label{sec:overview}
In the rest of this section, we will first show, in Section~\ref{sec:subd} that the case $k=2$ of Theorem~\ref{theorem:main:1} can be divided into 2 subcases, covered by Theorem~\ref{theorem:dense} and Theorem~\ref{theorem:k=2}, where the latter represents the critical case sketched in Section~\ref{sec:sketch}. We then find our tree decompositions (Section~\ref{sec:treedecomp}), give some basic results involving expansion properties (Section~\ref{sec:furtherexpand}), and cover the tree embedding tools we will use (in Section~\ref{sec:extendability}). In Section~\ref{sec:proof:k=2:dense}, we prove Theorem~\ref{theorem:dense}. In Section~\ref{sec:embedtocover}, we show how to embed trees while covering a vertex set, to be applied at each stage when embedding the tree in the critical case. This then allows us to prove the result in this critical case, Theorem~\ref{theorem:k=2}, in Section~\ref{section:k=2}. Finally, in Section~\ref{sec:largerk}, we use induction on $k$ to prove Theorem~\ref{theorem:main:1} from the $k=2$ case.


\subsection{Subcases when $k=2$}\label{sec:subd}
As discussed in Section~\ref{sec:sketch}, the case when $k=2$ in Theorem~\ref{theorem:main:1} is equivalent to the following result, which for convenience we state separately.

\begin{theorem}\label{corollary:k=2} Let $\mu\ll 1/\Delta$ and $1\le m\leq \mu n$, and let $T$ be an $n$-vertex tree with $\Delta(T)\leq \Delta$. Then, $R(T,K_{m,\mu n})=n+m-1$.
\end{theorem}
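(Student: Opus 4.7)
The lower bound $R(T,K_{m,\mu n})\ge n+m-1$ is given by the standard Burr construction around~\eqref{equation:ramseygood} (with $\chi(K_{m,\mu n})=2$ and $\sigma(K_{m,\mu n})=m$), so the task is to prove the matching upper bound. By the usual reformulation, this reduces to showing that whenever $G$ is a graph on $N:=n+m-1$ vertices whose complement $G^c$ is $K_{m,\mu n}$-free, $G$ contains a copy of $T$. The hypothesis translates into the expansion statement that every $U\subset V(G)$ with $|U|=m$ satisfies $|V(G)\setminus N'_G(U)|<\mu n$, and hence $|N_G(U)|\ge N-m-\mu n\ge(1-2\mu)n$, providing an excellent expansion condition on sets of size at least $m$. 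My plan is to combine this with the Friedman--Pippenger--Haxell extendability machinery recorded in Section~\ref{sec:extendability}.

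I will split at a threshold $m=m_0(\Delta)$ for some large constant $m_0$. When $m\le m_0$ (the setting of Theorem~\ref{theorem:dense}), the forbidden $K_{m,\mu n}$ is so small that $G$ is very dense: after removing a small bad set of vertices, the remainder has essentially full minimum degree and uniform expansion on all subsets, and a direct appeal to the extendability embedding tool finishes the job because the ``spare vertex'' deficit $m-1$ is only a constant. I expect this case to be routine.

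The critical range is $m_0\le m\le \mu n$ (Theorem~\ref{theorem:k=2}), where the vortex strategy sketched in Section~\ref{sec:sketch} is required. First I will preprocess $G$ by iteratively deleting small non-expanding sets (in the style of~\cite{balla2018}), passing to a subgraph $G'$ on $n+m'-1$ vertices for some $m'\le m$ whose complement is still $K_{m',\mu n}$-free and which additionally expands on \emph{every} subset of size at most $m'$; the bookkeeping for the change of parameter from $m$ to $m'$ is handled in Section~\ref{sec:subd}. Next I pick a nested sequence $V(G')=U_0\supset U_1\supset\ldots\supset U_\ell$ of random subsets with geometrically-decreasing sizes, terminating at $|U_\ell|=\Theta(m')$. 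A concentration and union-bound argument shows that with positive probability every $U_i$ inherits the expansion condition, while the innermost $U_\ell$ moreover satisfies that $G'[U_\ell]^c$ is $K_{\lambda m',\lambda m'}$-free for a small constant $\lambda$. I then partition $T$ into $\ell$ consecutive pieces of matching sizes and embed them in turn using the cover-down tool of Section~\ref{sec:embedtocover}: at stage $i$ the $i$-th piece of $T$ is embedded into $G'[U_{i-1}]$ so that every vertex of $U_{i-1}\setminus U_i$ is covered and as few vertices of $U_i$ as possible are used. The final piece is embedded entirely inside $G'[U_\ell]$, where the sharper $K_{\lambda m',\lambda m'}$-freeness provides expansion strong enough, relative to $|U_\ell|$, to complete the embedding with only $m'-1$ spare vertices.

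The main obstacle is precisely this last stage, and is the reason the vortex is needed at all. A standard Friedman--Pippenger-type embedding requires roughly $\Omega(\Delta\cdot\max\{\mu n,m\})$ spare vertices in the host graph, whereas the hypothesis only grants $m-1$, possibly far smaller than $\mu n$. Shrinking the host to a random vortex layer of size $\Theta(m)$ brings the available spare vertices up to a constant fraction of the host and revives the extendability method; the price is that the layers $U_{i-1}\setminus U_i$ discarded along the way must all be covered. The delicate technical work thus lies in making the cover-down embedding of Section~\ref{sec:embedtocover} compatible with the inherited expansion of each $U_i$, and in choosing the vortex parameters so that the expansion guarantees at successive scales fit together.
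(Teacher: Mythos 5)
Your proposal follows the paper's proof essentially verbatim: reduce to an embedding statement, clean $G$ by deleting a small non-expanding set $W$ (Proposition~\ref{prop:expander:subgraphnew}), and dispatch to Theorem~\ref{theorem:dense} for small spare-vertex count and to Theorem~\ref{theorem:k=2} (the vortex argument) for the critical range. Your descriptions of the vortex partition, the cover-down embedding of Section~\ref{sec:embedtocover}, and the role of $K_{\lambda m,\lambda m}$-freeness in the innermost layer are all accurate reflections of the paper's argument.

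One organizational wrinkle deserves attention. You split cases on $m$ \emph{before} cleaning, applying Theorem~\ref{theorem:dense} directly to $G$ when $m\le m_0$ and cleaning only in the large-$m$ case. But the cleaning step replaces $m$ by $m'=m-|W|$, which may be arbitrarily small even when $m$ itself was large (you could delete almost $m$ vertices). In that case Theorem~\ref{theorem:k=2}, which requires $1/m'\ll 1/\Delta$ for the vortex machinery, no longer applies to $G'$, and your Case~2 as written does not close. The paper avoids this by performing the cleaning first and then splitting on $m'$ rather than $m$: if $m'<\bar m$, apply Theorem~\ref{theorem:dense} to $G'$; if $m'\ge\bar m$, apply Theorem~\ref{theorem:k=2}. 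You flag that the "$m\to m'$ bookkeeping is handled in Section~\ref{sec:subd}", so you are aware the issue lives there, but your proposal should explicitly clean first and branch on $m'$, or else include a fallback to Theorem~\ref{theorem:dense} inside your Case~2 when $m'$ turns out small. Once this reordering is made, your argument coincides with the paper's.
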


We will separate Theorem~\ref{corollary:k=2} into two subcases, but first need the following definition for the main forms of expansion used throughout this paper.

	\begin{definition}\label{defn:exp}
	Let $m,m'\in\mathbb N$, $d>0$ and let $G$ be a graph.
\begin{enumerate}[label = \roman{enumi})]
\item We say that $G$ is a \textit{$(d,m)$-expander} if $|N(U)|\ge d|U|$ for all subsets $U\subset V(G)$ with $|U|\le m$.\label{defn:i}
\item We say that $G$ is \textit{$(m,m')$-joined} if every pair of disjoint sets $A$ and $B$, with $|A|= m$ and $|B|= m'$, has an edge between them in $G$. When $m=m'$, we simply say the graph is \textit{$m$-joined}.\label{defn:ii}
\end{enumerate}
\end{definition}
Note that a graph is $(m,m')$-joined exactly when its complement is $K_{m,m'}$-free. Using these definitions, we can now state two results, Theorems~\ref{theorem:dense} and~\ref{theorem:k=2}, before proving that Theorem~\ref{corollary:k=2} follows from them using an additional result, Proposition~\ref{prop:expander:subgraphnew}. Theorem~\ref{theorem:dense} covers Theorem~\ref{corollary:k=2} when $m$ is at most some constant depending on $\mu$, while Theorem~\ref{theorem:k=2}, roughly speaking, will cover Theorem~\ref{corollary:k=2} when $m$ is larger and some stronger expansion condition holds.
For convenience in their proofs, we adjust the parameters so that they are applied to trees with $n-m+1$ vertices in an $n$-vertex graph (instead of $n$ and $n+m-1$, respectively).

\begin{theorem}\label{theorem:dense}Let $\mu\ll 1/ \Delta,1/m$ with $m\leq \mu n$, and let $G$ be an $n$-vertex $(m,\mu n)$-joined graph.

Then, $G$ contains a copy of every $(n-m+1)$-vertex tree $T$ with $\Delta(T)\le \Delta$.
\end{theorem}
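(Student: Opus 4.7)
The plan is to reduce Theorem~\ref{theorem:dense} to a single application of the extendability-based tree embedding (Corollary~\ref{corollary:extendable:embedding}), after first adjusting $G$ to have uniform expansion. Since $G$ is $(m,\mu n)$-joined, the joining property already gives $|N_G(U)\setminus U|\geq n-\mu n-|U|$ whenever $|U|\geq m$, so essentially all the work is in securing expansion also for vertex subsets of size smaller than $m$.

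The key preparatory step is to isolate a cleanup set $S\subseteq V(G)$ whose removal yields uniform expansion. Fix a constant $d>\Delta$ suited to the embedding tool. Choose $S$ of maximum size subject to $|S|\leq m-1$ and $|N_G(S)\setminus S|<d|S|$, allowing $S=\emptyset$ if no non-empty such set exists. Set $G':=G-S$ and $m':=m-|S|$, so that $|V(G')|=|T|+(m'-1)$ leaves $m'-1$ spare vertices for the embedding. Two expansion bounds on $G'$ then fall out. For non-empty $U\subseteq V(G')$ with $|U|\leq m'-1$, maximality of $S$ forces $|N_{G'}(U)\setminus U|\geq d|U|$: otherwise, using the inclusion $|N_G(S\cup U)\setminus(S\cup U)|\leq |N_G(S)\setminus S|+|N_{G'}(U)\setminus U|<d|S|+d|U|=d|S\cup U|$, the set $S\cup U$ would contradict the choice of $S$. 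For $U\subseteq V(G')$ with $|U|\geq m'$, so that $|S\cup U|\geq m$, the joining property of $G$ applied to $S\cup U$ combined with $|N_G(S)\setminus S|<d|S|$ yields $|N_{G'}(U)\setminus U|\geq n-\mu n-(d+1)|S|-|U|$, which far exceeds $d|U|$ throughout the range the embedding tool needs.

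With uniform $(d,\cdot)$-expansion established on $G'$, I would apply Corollary~\ref{corollary:extendable:embedding} to embed $T$ into $G'$, starting from an initial extendable partial embedding anchored at a vertex of high degree (whose existence follows from the K\H{o}v\'ari-S\'os-Tur\'an sparsity $e(G^c)=O(n^{2-1/m})$ of $G^c$, together with an averaging argument). The embedding uses $|T|$ of the $|V(G')|=|T|+(m'-1)$ vertices of $G'$, leaving $m-1$ unused vertices of $G$ in total, and produces the desired copy of $T$ inside $G'\subseteq G$.

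The main obstacle is the boundary case $|S|=m-1$, where $m'=1$ and the embedding into $G'$ must be spanning. Here the maximality-based bound is vacuous, but the joining-based bound specializes (by taking $|U|=1$ in the derivation above) to show that every $v\in V(G')$ satisfies $d_{G'}(v)\geq|V(G')|-\mu n-(d+1)(m-1)$, so $G'$ is almost complete. A spanning embedding into such a near-complete graph of bounded maximum degree is still delivered by Corollary~\ref{corollary:extendable:embedding} with zero slack, provided the tool's parameters tolerate this regime; verifying that a single choice of $d$ works uniformly for all $|S|\in\{0,1,\ldots,m-1\}$, and in particular across the transition from the positive-slack regime to the spanning regime, is the most delicate point of the argument.
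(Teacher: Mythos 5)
Your proposal reduces Theorem~\ref{theorem:dense} to a single application of Corollary~\ref{corollary:extendable:embedding} after a cleanup step that secures uniform expansion; this diverges fundamentally from the paper's argument, and unfortunately the reduction cannot go through. Corollary~\ref{corollary:extendable:embedding} requires the slack condition $|R|+|T|\le|G|-(2d+2)m-m'$, so the host graph must retain at least $(2d+2)m+m'$ unused vertices once the embedding is complete. In your setup $|G'|=|T|+(m'-1)$ with $m'\leq m$, so only $m'-1\leq m-1$ spare vertices are available, which is far below the required $(2d+2)m+m'$ for any admissible $d\geq 3$. You flag the boundary case $|S|=m-1$ as the delicate one, but the obstruction is already fatal at $|S|=0$: the Haxell/Friedman--Pippenger extendability machinery maintains its invariant only by keeping a reservoir of order $dm$ untouched vertices, and here the reservoir is $O(1)$. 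This is not a matter of tuning $d$; Corollary~\ref{corollary:extendable:embedding} is structurally incapable of producing near-spanning embeddings, which is precisely what Theorem~\ref{theorem:dense} demands.

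The paper handles this by departing from the pure extendability method. It first embeds a small linear subtree $T_0$ of $T$ by a random greedy process into a set $V_0$ of high-degree vertices, and establishes (Claim~\ref{claim:new}, via a submartingale concentration bound) a ``swap'' property: for every $m$-set $U$ of unused vertices and every $w\in V_0\setminus U$, many of the randomly embedded images $v_j$ are adjacent to $w$ and have all their embedded $T_0$-neighbours inside $N_G(u)$ for some $u\in U$. The remaining tree is then embedded greedily, and whenever the greedy step stalls because a vertex $w$ has no fresh neighbour, the swap property allows one embedded vertex $v_j$ to be replaced by some unused $u$, releasing $v_j$ for $w$. This local re-embedding is what allows the process to finish with only $m-1$ spare vertices. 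If you want to salvage your route you would need to replace Corollary~\ref{corollary:extendable:embedding} with a genuinely spanning bounded-degree tree embedding theorem for graphs of minimum degree $n-O(\mu n)$, which is a strictly stronger tool than anything the extendability framework in this paper supplies.
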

\begin{theorem}\label{theorem:k=2} Let $1/D,1/m,\mu\ll1/\Delta$ and $m\leq \mu n$, and let $G$ be an $n$-vertex $(m,\mu n)$-joined graph which is a $(D,m)$-expander.

Then, $G$ contains a copy of every $(n-m+1)$-vertex tree $T$ with $\Delta(T)\le \Delta$.
\end{theorem}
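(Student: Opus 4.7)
The plan is to prove \cref{theorem:k=2} via the vortex-based iterative embedding outlined in \cref{sec:sketch}. I would first construct a nested sequence $V(G) = U_0 \supset U_1 \supset \ldots \supset U_\ell$ by letting $U_i$ be a random subset of $U_{i-1}$ of size $\alpha |U_{i-1}|$ for some small $\alpha = \alpha(\Delta) > 0$, with $\ell$ chosen so that $|U_\ell|$ has order $Km$ for a suitable large constant $K = K(\Delta)$. Using Chernoff bounds together with a union bound over $m$-sets, I would show that, with positive probability (and after a short cleaning step), each $G[U_i]$ inherits the expander condition, say as a $(D/2, m)$-expander, and is $(m, 2\mu|U_i|)$-joined, and that moreover $G[U_\ell]$ is $(\lambda m, \lambda m)$-joined for a sufficiently small $\lambda = \lambda(\Delta)$. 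The key point for this last property is that each $m$-set $U$ in $G$ has at most $\mu n$ non-neighbours, so its expected number of non-neighbours inside $U_\ell$ is $O(\mu|U_\ell|) = O(\mu K m)$; choosing $\mu K \ll \lambda$ makes this much smaller than $\lambda m$, and the few ``bad'' sets where concentration fails can be dealt with by the cleaning step alluded to in \cref{sec:sketch}.

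I would then embed $T$ in $\ell + 1$ stages, maintaining after stage $i \le \ell$ the invariant that the current partial embedding covers exactly the vertices of $V(G) \setminus U_i$, together with a small ``frontier'' of $O(m)$ extendable vertices inside $U_i$ from which the tree can still be grown. In stage $i+1$, I would apply the embed-to-cover machinery to be developed in \cref{sec:embedtocover} to extend the embedding so that every previously uncovered vertex of $U_i \setminus U_{i+1}$ is used, with the new frontier landing in $U_{i+1}$. This is achievable because in $G[U_i]$ the joinedness parameters force only $O(\Delta \max\{\mu|U_i|, m\})$ buffer vertices via an extendability argument based on \cref{corollary:extendable:embedding} combined with Haxell's method and a roll-back step, which is comfortably smaller than $|U_{i+1}|$.

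The delicate step is the final one. In $G[U_\ell]$ we must finish the embedding so that the total number of uncovered vertices across $G$ is exactly $m-1$. With only about $|U_\ell| - (m-1)$ vertices of the tree left to embed inside an $O(Km)$-vertex subgraph, the naive joinedness parameter $\mu|U_\ell|$ would force $\Omega(\Delta K m)$ spare vertices, which is far too many. Instead, the $(\lambda m, \lambda m)$-joined property turns the effective joinedness parameter into $\lambda m$, and the extendability embedding then needs only $O(\Delta \lambda m) < m$ spare vertices, fitting inside the $m-1$ budget. The preserved expander property of $G[U_\ell]$ simultaneously handles the small sets of size up to $m$.

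The main obstacle will be balancing the constants so that (i) the random vortex simultaneously retains both expansion conditions with room to spare at every level, (ii) the cleaning step produces a $(\lambda m, \lambda m)$-joined $G[U_\ell]$ without removing more than a handful of vertices, since each removal eats into the very tight $m-1$ slack, and (iii) the embed-to-cover procedure exhausts $U_i \setminus U_{i+1}$ exactly at each stage while leaving a workable frontier inside $U_{i+1}$. In particular, tracking through every stage so that the leftover tree piece matches the size of $G[U_\ell]$ to within $m-1$ vertices is where the bookkeeping will be hardest.
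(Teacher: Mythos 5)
Your high-level plan tracks the paper's argument closely: build a geometrically shrinking random vortex $U_0\supset\ldots\supset U_\ell$, iteratively embed tree pieces so that each stage exactly exhausts the annulus $U_{i-1}\setminus U_i$, and finish inside $U_\ell$ using its much stronger $(\lambda m,\lambda m)$-joinedness so that the $(2d+2)\lambda m$-type losses in the final extendability step fit inside the $m-1$ slack. The paper does the tree decomposition first (Lemma~\ref{lemma:ngammadecomposition}) and sizes the vortex to match, whereas you would reverse that order, but this is a cosmetic difference.

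There are two genuine gaps in the vortex construction, and the second is serious. First, a random subset $U_i$ of density $\alpha^i$ does not inherit a $(D/2,m)$-expander condition: a vertex $v$ with $|N_G(v)|=D$ is allowed in a $(D,m)$-expander, and then $\mathbb{E}\,|N_G(v)\cap U_i|=\alpha^i D\ll D/2$ once $i\ge2$. The paper never tries to keep an expansion factor of order $D$ inside the levels; what it recovers after cleaning is only a constant-factor $(d,m)$-expansion with $d$ determined by the innermost scale $K$ and the joinedness, not by $D$ (see~\ref{prop:constructvorpart3} in the proof of Lemma~\ref{lemma:vortex:partition}). Second, and crucially, you treat the cleaning step as routine, but cleaning each of the $\ell$ levels to restore expansion removes up to $m$ vertices per level, i.e. up to $\Theta(\ell m)$ in total, which wildly overshoots the $m-1$ budget. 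The paper's Lemma~\ref{lemma:vortex:partition} avoids this by not deleting the cleaned vertices $W_j$ ($j\ge1$) at all: they are shunted into the outermost layer $V_0$, where the stage-$1$ embed-to-cover step absorbs them; only one final cleaning (to make $I(V_0)$ extendable) actually discards vertices, and that discards fewer than $m/4$. You explicitly flag this balancing as "the hardest bookkeeping", but without the redirect-into-$V_0$ mechanism (or an equivalent) the bookkeeping does not close.
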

Before showing these two results imply Theorem~\ref{corollary:k=2}, we need to find an expander in the sense of Definition~\ref{defn:exp}\ref{defn:i} in our graphs covered by this theorem, which we do in the following proposition.

\begin{proposition}\label{prop:expander:subgraphnew}Let $n_0,m\in\mathbb N$ and $d>0$, and let $G$ be an $(m,n_0)$-joined graph with $|G|\geq n_0+(2d+2)m$.
Then, there exists some $W\subset V(G)$ such that $|W|<m$ and, setting $m'=m-|W|$, $G-W$ is an $(m',n_0+dm)$-joined $(d,m)$-expander.
\end{proposition}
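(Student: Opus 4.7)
The plan is to construct $W$ as a single maximum ``bad'' set. If $G$ is already a $(d, m)$-expander (no $U \subseteq V(G)$ has $|U| \leq m$ and $|N_G(U)| < d|U|$), take $W = \emptyset$ and $m' = m$; the required $(m, n_0 + dm)$-joined property then follows immediately from $G$ being $(m, n_0)$-joined, as the latter is strictly stronger for the pair $(m, n_0+dm)$. Otherwise, choose $U \subseteq V(G)$ of maximum cardinality subject to $|U| \leq m$ and $|N_G(U)| < d|U|$, and set $W = U$. The bound $|W| < m$ follows from the $(m, n_0)$-joined hypothesis combined with $|G| \geq n_0 + (2d + 2)m$: any $m$-subset $S$ has at most $n_0 - 1$ non-neighbours, so $|N_G(S)| > (2d + 1)m > dm$, meaning no bad set can have size $m$; hence $|U| \leq m - 1$.

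Next I would verify that $G - U$ is a $(d, m)$-expander by splitting a candidate $V \subseteq V(G - U)$ with $|V| \leq m$ into two cases. If $V \neq \emptyset$ and $|V| \leq m - |U|$, then $|V \cup U| \in (|U|, m]$, so by maximality of $U$ the set $V \cup U$ is not bad, giving $|N_G(V \cup U)| \geq d(|V| + |U|)$; combining this with the decomposition $N_G(V \cup U) \subseteq (N_G(V) \setminus U) \cup N_G(U)$ and the bound $|N_G(U)| < d|U|$ yields $|N_{G - U}(V)| > d|V|$. If instead $m - |U| < |V| \leq m$, augment $V$ to $V^* = V \cup U'$ of size exactly $m$ by choosing $U' \subsetneq U$ of size $m - |V|$; the $(m, n_0)$-joined property gives $|N_G(V^*)| > (2d + 1)m$, and an analogous decomposition, together with the bounds $|N_G(V^*) \cap U| \leq |U| + |V| - m$ and $|N_G(U)| < d|U|$, yields $|N_{G - U}(V)| > (2d + 2)m - (d + 1)|U| - |V| \geq d|V|$, the last inequality holding because $|V| \leq m < 2m - |U|$.

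Finally, to check that $G - U$ is $(m', n_0 + dm)$-joined, take disjoint $A, B \subseteq V(G - U)$ with $|A| = m'$ and $|B| = n_0 + dm$. Applying the join to the $m$-set $A \cup U$ shows at most $n_0 - 1$ vertices of $B$ are non-neighbours of $A \cup U$, so at least $dm + 1$ vertices of $B$ are neighbours of $A \cup U$ in $G$; since $|N_G(U)| < d|U| \leq dm - d$, at most $dm - d$ of these can be neighbours only of $U$, leaving at least $d + 1 > 0$ vertices of $B$ adjacent to $A$ in $G - U$. The main obstacle is identifying the right definition of $W$: naive iterative removal of small non-expanding witnesses amplifies $|W|$ by a factor of $d + 1$ per step and does not obviously stay below $m$, whereas choosing a single maximum bad set yields both the expander condition (via maximality for small $V$ and the join condition for large $V$) and the joined condition (via the join on $A \cup U$) with the required size bound.
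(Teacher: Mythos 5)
Your proof is correct and follows essentially the same approach as the paper: choose $W$ to be a single extremal ``bad'' set, bound $|W|<m$ via the joinedness hypothesis together with $|G|\geq n_0+(2d+2)m$, and deduce both the expander and joinedness properties of $G-W$ from extremality of $W$ and the joinedness of $G$ applied to $X\cup W$. The only technical difference is that the paper takes $W$ maximal subject to the more generous bound $|W|<2m$ (rather than $|W|\leq m$), so that for any $U\subset V(G)\setminus W$ with $|U|\leq m$ the set $U\cup W$ still has size below the threshold and the expansion inequality follows from a single appeal to maximality, avoiding the case split on $|V|\leq m-|U|$ versus $|V|>m-|U|$ that your parameter choice forces.
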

\begin{proof}
Let $W\subset V(G)$ be a maximal subset subject to $|W|< 2m$ and $|N_G(W)|\leq d|W|$, possible as $W=\emptyset$ satisfies these conditions. Note that if $|W|\geq m$, then, as $G$ is $(m,n_0)$-joined, $|V(G)\setminus (W\cup N_G(W)|<n_0$, so that $|N_G(W)|\geq |G|-|W|-n_0\geq 2dm> d|W|$, a contradiction. Thus, we have $|W|<m$. Let $m'=m-|W|\geq 1$.

Now, if $U\subset V(G)\setminus W$ with $|U|\leq m$ and $U\neq\emptyset$, then, as $|W|< m$, we must have that $|U\cup W|< 2m$ and $|U\cup W|>|W|$. To avoid contradicting the choice of $W$, we must have $|N_G(U\cup W)|\geq d|U\cup W|$, and hence $|N_{G-W}(U)|\geq d|U\cup W|-d|W|\geq d|U|$. Thus, $G-W$ is an $(d,m)$-expander.

Furthermore, if $X,Y\subset V(G-W)$ are disjoint sets with $|X|=m'$, $|Y|=n_0+dm$, and $e_{G'}(X,Y)=0$, then $e_G(X\cup W,Y\setminus N_G(W))=0$, $|X\cup W|=m$ and $|Y\setminus N_G(W)|\ge n_0+dm-d|W|\geq n_0$, which contradicts that $G$ is $(m,n_0)$-joined. Thus, $G-W$ is $(m',n_0+dm)$-joined.
\end{proof}

We can now deduce Theorem~\ref{corollary:k=2} from Theorems~\ref{theorem:dense} and~\ref{theorem:k=2} using Proposition~\ref{prop:expander:subgraphnew}, as follows.


\begin{proof}[Proof of Theorem~\ref{corollary:k=2}] Let $\mu\ll 1/\Delta$ and $1\le m\leq \mu n$, let $T$ be an $n$-vertex tree with $\Delta(T)\leq \Delta$, and let $G$ be a graph on $n+m-1$ vertices such that $G^c$ does not contain a copy of $K_{m,\mu n}$. To prove Theorem~\ref{corollary:k=2}, then, we need to show that $G$ contains a copy of $T$.
Using that $\mu\ll 1/\Delta$, let $\bar{\mu}$ and $\bar{m}$ satisfy $\mu\ll\bar{\mu}\ll 1/\bar{m} \ll 1/\Delta$ and let $D=\bar{\mu} n/100m\geq \bar{\mu}/100\mu$ (as $m\leq \mu n$), so that $1/D\ll 1/\Delta$.

We have $n\geq \bar{\mu} n/2+(2D+2)m$, and that $G$ is $(m,\bar{\mu} n/2)$-joined as its complement is $K_{m,\mu n}$-free and $\mu\ll\bar{\mu}$. Thus, using Proposition \ref{prop:expander:subgraphnew} and $D m\leq \bar{\mu}n/2$, we can find a set $W\subset V(G)$ with $|W|<m$ such that, setting $m'=m-|W|$ and $G'=G-W$, $G'$ is an $(m',\bar{\mu} n)$-joined $(D,m)$-expander. Note that $|G'|=n+m'-1$ and $m'\leq m\leq \mu n\leq \bar{\mu}n$.

If we have $m'<\bar{m}$, then, as $\bar{\mu}\ll 1/\bar{m},1/\Delta$ and $G'$ is an $(m',\bar{\mu} n)$-joined graph with $n+m'-1$ vertices, $G'$ contains a copy of $T$ by Theorem~\ref{theorem:dense}.
If $m'\ge \bar{m}$, then since $G'$ is a $(D,m)$-expander and $m'\leq m$, it is also a $(D,m')$-expander. Then, as $1/\bar{m},1/D,\bar{\mu}\ll 1/\Delta$, $m'\geq \bar{m}$, and $G'$ is $(m',\bar{\mu} n)$-joined, $G'$ contains a copy of $T$ by Theorem~\ref{theorem:k=2}, as required.
\end{proof}



\subsection{Structural decompositions of trees}\label{sec:treedecomp}
To show our cases cover all trees, we will need the following useful result which states that, if a tree has few leaves, then it has many bare paths in compensation.
\begin{lemma}[{\cite[Lemma 2.1]{Krivelevich2010}}]\label{lemma:leaves}
Let $k, \ell,n\in\mathbb N$  and let $T$ be a tree with $n$ vertices and at most $\ell $ leaves. Then $T$ contains at least $\frac n{k+1}-2 \ell+2$ vertex disjoint bare paths, each of length $k$.
\end{lemma}

We now give our main tree decomposition, decomposing the edges of a bounded degree tree $T$ into subtrees $T_1,\ldots ,T_\ell$ such that $T_i$ and $T_{i+1}$ intersect only on a leaf of $T_i$, as in the following definition.

\begin{definition} Let $\ell\in \mathbb N$ and let $\mathbf{n}=(n_1,\ldots,n_\ell)\in \mathbb N^{\ell}$. An $\mathbf{n}$-decomposition of a tree $T$ is a tuple $(T_1,\ldots, T_\ell)$ of edge-disjoint subtrees of $T$ such that
\begin{enumerate}[label = \roman{enumi})]
    \item $E(T)=E(T_1)\cup\ldots\cup E(T_\ell)$,
    \item $|T_1|=n_1$, and $|T_i|=n_i+1$ for each $2\leq i\leq \ell$, and
    \item for each $i\in [\ell-1]$, $T_i$ contains no vertices in $T_{i+2}\cup \ldots\cup T_\ell$, and $V(T_i)\cap V(T_{i+1})$ contains exactly one vertex, which is a leaf of $T_i$. 
\end{enumerate}
\end{definition}
We want to find such a decomposition where the trees $T_1, \ldots, T_\ell$ decrease in size by approximately a constant factor each time. That is, an $\mathbf{n}$-decomposition where $\mathbf{n}$ is \emph{$(\gamma_1,\gamma_2)$-descending}, as follows.

\begin{definition}
For $0<\gamma_1<\gamma_2<1$, we say that a tuple $\mathbf{n}=(n_1,\ldots,n_\ell)\in \mathbb N^{\ell}$ is \emph{$(\gamma_1,\gamma_2)$-descending} if $\gamma_1n_i\le n_{i+1}\le\gamma_2n_i$ for every $i\in [\ell-1]$.
\end{definition}

We will find descending decompositions of trees, by iteratively using the following proposition.

\begin{proposition}\label{prop:splittree}
Let $\gamma\in (0,1)$ and $n,\Delta\in\mathbb N$ satisfy $\gamma n\ge \Delta$ and $n\geq 2$. Let $T$ be a tree with $n$ vertices and $\Delta(T)\le \Delta$, and let $t\in V(T)$. Then, there exists two subtrees $T_1, T_2\subseteq T$ such that \begin{enumerate}[label = \upshape\roman{enumi})]
    \item $E(T)=E(T_1)\cup E(T_2)$,
    \item $T_1$ and $T_2$ share exactly one vertex $v$, which is a leaf of $T_1$,
    \item $t\in V(T)\setminus V(T_2)$, and
    \item $\gamma n\ge|T_2|\ge \gamma n/2\Delta$.
\end{enumerate}
\end{proposition}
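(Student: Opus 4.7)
The plan is to build the desired decomposition via a standard ``heavy-child'' walk down the tree rooted at $t$. First, root $T$ at $t$ and, for every vertex $u\in V(T)$, write $S_u$ for the set consisting of $u$ and all descendants of $u$ in this rooted tree; call $|S_u|$ the \emph{weight} of $u$. We will pick a vertex $v\ne t$ with weight lying in the target window $[\gamma n/2\Delta,\gamma n]$, and then define $T_2$ to be the subtree of $T$ induced on $S_v$ and $T_1$ to be the subtree of $T$ induced on $(V(T)\setminus S_v)\cup\{v\}$.

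The vertex $v$ is found by walking from $t$ downwards, at each step moving from the current vertex to a child of largest weight (with arbitrary tie-breaking), stopping at the first vertex $v$ along this walk whose weight is at most $\gamma n$. Such a vertex exists because $|S_t|=n>\gamma n$ (using $\gamma<1$), while every leaf of $T$ has weight $1\le\gamma n$ (using $\gamma n\ge\Delta\ge 1$), so the walk must eventually cross the threshold. Let $u$ be the parent of $v$ along this walk, so $|S_u|>\gamma n$. Since $u$ has at most $\Delta$ children in the rooted tree and $v$ is the one of largest weight,
\[
\Delta\cdot |S_v|\;\ge\;\sum_{c\text{ child of }u}|S_c|\;=\;|S_u|-1\;>\;\gamma n-1\;\ge\;\gamma n/2,
\]
where the last inequality uses $\gamma n\ge\Delta\ge 2$ (the degenerate case $\Delta=1$ forces $n=2$ and can be handled by taking $T_1=T$ and $T_2$ to be the single vertex of $T$ distinct from $t$). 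Thus $|S_v|\ge\gamma n/(2\Delta)$, and by construction $|S_v|\le\gamma n$, establishing condition \itref{iv}.

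It remains only to verify the structural conditions. Condition \itref{i} is immediate: every edge of $T$ either has both endpoints in $S_v$ (and so lies in $T_2$) or at least one endpoint outside $S_v$ (and so lies in $T_1$, since the unique ``crossing'' edge $uv$ has $v\in V(T_1)$). Condition \itref{ii} holds because $V(T_1)\cap V(T_2)=\{v\}$ by construction, and within $T_1$ the vertex $v$ has a single neighbour, namely its parent $u$, so $v$ is a leaf of $T_1$. Condition \itref{iii} holds because the walk does not revisit $t$ and we chose $v\ne t$, so $t\in V(T_1)\setminus V(T_2)$. There is no real obstacle here beyond making sure the inequality $(\gamma n-1)/\Delta\ge\gamma n/(2\Delta)$ holds, which is exactly what the hypothesis $\gamma n\ge\Delta$ (combined with $n\ge 2$) buys us; the corner case $\Delta=1$ needs only a one-line separate treatment.
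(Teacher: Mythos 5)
Your proof is correct. Both you and the paper root $T$ at $t$ and cut off a rooted subtree $S_v$ at some carefully chosen $v$, but the selection rules are genuinely different: the paper picks a vertex $v$ of maximum depth subject to the \emph{lower} bound $|S_v|\geq\gamma n/2\Delta$ and then derives the \emph{upper} bound $|S_v|\leq\gamma n$ by noting that each child of $v$ violates the threshold, whereas you greedily descend via heaviest children and stop at the first $v$ satisfying the \emph{upper} bound $|S_v|\le\gamma n$, then derive the \emph{lower} bound from $v$ being the heaviest child of $u$ and $|S_u|>\gamma n$. The two rules can in fact return different vertices (the paper's deepest-subject-to-floor choice may pick a small subtree far below $t$, while your walk may stop much higher with a larger one), but both always land in $[\gamma n/2\Delta,\,\gamma n]$. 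Your argument needs the separate treatment of $\Delta=1$ only because the chain $\gamma n-1\geq\gamma n/2$ requires $\gamma n\geq 2$; the paper sidesteps this by working with $\lfloor\gamma n/2\Delta\rfloor$, which collapses to $0$ when $\gamma n<2$. Your one-line handling of that corner case is correct, as is your verification of conditions \textit{i)}--\textit{iii)}.
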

\begin{proof}
View $T$ as being rooted at $t$. Let $v$ be a vertex furthest from $t$ subject to the condition that the subtree $T_2$ of $T$ rooted at $v$ has size $|T_2|\geq \gamma n/2\Delta$. Note that this is possible as $t$ has at most $\Delta(T)\leq \Delta$ neighbours, the subtree rooted at one of these must have size at least $(n-1)/\Delta\geq n/2\Delta$ and hence is a candidate for $v$ as $\gamma<1$. Moreover, this shows $v\not=t$. Let $T_1$ be subgraph of $T$ induced by the vertex set $(V(T)\setminus V(T_2))\cup\{v\}$. Note that $T_1$ is a subtree of $T$ and $v$ is leaf in $T_1$. It follows that $T_1,T_2$ satisfies conditions i), ii) and iii). For condition iv), let $v'$ be any neighbour of $v$ in $T_2$. To avoid contradicting the definition of $v$, the subtree rooted at $v'$ must have size at most $\floor{\gamma n/2\Delta}$. Since there are at most $\Delta$ such $v'$, we have $|T_2|\leq1+\Delta\cdot\floor{\gamma n/2\Delta}\leq\gamma n$, as required.
\end{proof}

We can now find our desired descending decompositions of large bounded degree trees, as follows.
\begin{lemma}\label{lemma:ngammadecomposition}
Let $0<\gamma<1/2$ and let $n,N,\Delta\in\mathbb N$ satisfy $n-1>N\geq\Delta/\gamma$. Then for any $n$-vertex tree $T$ with maximum degree at most $\Delta$ and any $t\in V(T)$,  there exists a $(\frac\gamma{4\Delta},2\gamma)$-descending  tuple $\mathbf{n}=(n_1,\ldots,n_\ell)\in\mathbb{N}^{\ell}$ with $\frac\gamma{3\Delta}N\leq n_\ell\leq N$ such that $T$ has an $\mathbf{n}$-decomposition $(T_1,\ldots, T_\ell)$ with $t\in V(T_1)$.
\end{lemma}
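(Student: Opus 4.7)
The plan is to construct $(T_1,\ldots,T_\ell)$ iteratively by repeated applications of Proposition~\ref{prop:splittree}, producing the pieces in reverse order of size: $T_\ell$ first, then $T_{\ell-1}$, and so on, with $T_1$ the final remainder containing $t$. We maintain a current subtree $S$ satisfying $t\in V(S)$, initialised by $S=T$. At each iteration we apply Proposition~\ref{prop:splittree} to $S$ with a carefully chosen parameter; this returns a split $S=S'\cup B$ with $t\in V(S')$ and $V(S')\cap V(B)$ a single vertex (a leaf of $S'$). We then record $B$ as the next piece of the decomposition and replace $S$ with $S'$, preserving the invariant $t\in V(S)$, so that when we finally set $T_1:=S$ we automatically get $t\in V(T_1)$.

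For the first iteration I would take $\gamma_1:=N/n\in(0,1)$ in Proposition~\ref{prop:splittree}; this is admissible since $\gamma_1 n=N\geq\Delta/\gamma\geq\Delta$, and yields $T_\ell$ with $|T_\ell|\in[N/(2\Delta),N]$. A short case analysis (splitting on whether $N\leq 3\Delta/\gamma$ or $N>3\Delta/\gamma$, and using $\gamma<1/2$ and $N\geq\Delta/\gamma$) confirms that $n_\ell:=|T_\ell|-1\in[\gamma N/(3\Delta),N]$. At each subsequent iteration, let $b$ denote the size of the piece just split off and set $s:=|S|$. We stop if $b-1\in[\gamma s/(4\Delta),2\gamma s]$ and take $T_1:=S$, $n_1:=s$; this is exactly the condition that encodes the descending bound between $T_1$ and $T_2$. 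Otherwise, we apply Proposition~\ref{prop:splittree} to $S$ with a parameter $\gamma'$ chosen so that the new split-off piece $B$ satisfies $|B|-1\in\bigl[(b-1)/(2\gamma),\, 4\Delta(b-1)/\gamma\bigr]$, precisely the range needed to maintain the descending condition on the next consecutive pair. Such a $\gamma'$ exists because Proposition~\ref{prop:splittree}'s output interval $[\gamma's/(2\Delta),\gamma's]$ has multiplicative width $2\Delta$, which fits inside the target interval of multiplicative width $8\Delta$; concretely, taking $\gamma's$ near $4\Delta(b-1)/\gamma$ works (and admissibility $\gamma's\geq\Delta$ is automatic since $b\geq 2$).

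The main obstacle I expect is to verify that the iteration terminates at a valid stopping configuration---that $|S|$ eventually lands inside the stopping window $[(b-1)/(2\gamma),\,4\Delta(b-1)/\gamma]$ rather than skipping past it. Termination in $O(\log(n/N))$ steps is immediate since the lower stopping threshold $(b-1)/(2\gamma)$ grows by a factor at least $1/(2\gamma)>2$ per iteration, while $|S|$ decreases monotonically. The subtler part is placing $|S|$ inside the stopping window at the right moment: because the window has multiplicative width $8\Delta$, and we have matching flexibility in choosing $|B|$ within its allowable target range, we can steer $|S|=s-|B|+1$ into the window by choosing $\gamma'$ adaptively at the critical step---taking $\gamma'$ near the lower end of its permissible range if $|S|$ would overshoot, and near the upper end otherwise. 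Routine $\pm 1$ bookkeeping from the shared vertices, combined with $\gamma<1/2$ and $N\geq\Delta/\gamma$, closes the argument and yields the descending condition at every transition.
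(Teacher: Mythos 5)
Your construction does not preserve the chain structure required by the definition of an $\mathbf{n}$-decomposition: condition iii) demands that $V(T_i)\cap V(T_{i+1})$ be exactly one vertex (a leaf of $T_i$) for every $i\in[\ell-1]$. When you apply Proposition~\ref{prop:splittree} to the current core $S$ with the \emph{fixed} root $t$, the split-off piece $B$ hangs below some vertex $v$ far from $t$; once $B$ is removed, $v$ is a leaf of the new core $S'$, and its subtree in $S'$ rooted at $t$ is a single vertex. At the next iteration, again rooted at $t$, the new piece $B'$ hangs below some vertex $v'$, but nothing forces $v$ to be a descendant of $v'$: the two vertices can sit in disjoint branches, so that $V(B)\cap V(B')=\emptyset$. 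Concretely, take a spider with centre $c$ and three legs of equal length, with $t$ at the tip of leg $1$; your first split can remove leg $2$ and the second can remove leg $3$, and these two pieces are vertex-disjoint, so they cannot be consecutive in an $\mathbf{n}$-decomposition. Thus your sequence is not a valid decomposition, and the downstream embedding that glues consecutive pieces at a shared leaf cannot be carried out.

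The paper's proof avoids this by recording the \emph{large} piece at each step and recursing into the \emph{small} one, moving the root as it goes. At step $i$ it holds a residual $T_i'$ rooted at $t_i$, where $t_1=t$ and $t_i$ (for $i\geq 2$) is the shared vertex with the already-recorded $T_{i-1}$. Applying Proposition~\ref{prop:splittree} to $T_i'$ with root $t_i$ and the \emph{same fixed} parameter $\gamma$ yields $T_i$, the piece containing $t_i$, and a residual $T'_{i+1}$ sharing a vertex $t_{i+1}$ which is a leaf of $T_i$; since each recorded $T_i$ contains the shared vertex $t_i$, consecutive pieces automatically intersect in exactly one vertex. Using a fixed $\gamma$ also makes the descending bounds and termination essentially immediate, since each residual shrinks by a factor in $[\gamma/2\Delta,\gamma]$ and one stops once $|T_i'|-1\le N$. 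Your adaptive choice of $\gamma'$ and the steering of $|S|$ into a stopping window, while probably repairable, becomes unnecessary once the recursion is set up in this direction.
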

\begin{proof}
We construct the desired decomposition iteratively, beginning with $T_1'=T$, and after $i$ iterations obtain a sequence of trees  $(T_1,T_2,\ldots,T_{i-1},T_i')$ forming a $(|T_1|,|T_2|-1,\ldots,|T_{i-1}|-1,|T_i'|-1)$-decomposition of $T$.
For each $i\geq 1$, if $|T_i'|-1\leq N$, then stop iterating.
Otherwise, let $t_i$ be the unique common vertex of $T_i'$ and $T_{i-1}$ if $i\geq 2$ and let $t_1=t$.
Noting that $|T_i'|>N\geq \Delta/\gamma$, we can apply Proposition~\ref{prop:splittree} to obtain subtrees $T_i$ and $T'_{i+1}$ of $T_{i}'$ such that $E(T_i')=E(T_i)\cup E(T'_{i+1})$, $T_i$ and $T_{i+1}'$ share a unique vertex $t_{i+1}$ which is a leaf of $T_i$, $t_i\in V(T_i)\setminus V(T'_{i+1})$, and $\gamma |T_i'|\geq |T'_{i+1}|\geq \gamma |T_i'|/2\Delta$. This implies that $(T_1,T_2,\ldots,T_{i},T_{i+1}')$ is a $(|T_1|,|T_2|-1,\ldots,|T_{i}|-1,|T_{i+1}'|-1)$-decomposition of $T$.

As $\gamma<1$, this process must end; suppose it ends with a sequence $(T_1,\ldots,T_{\ell-1},T_\ell')$. Let $n_1=|T_1|$, $n_i=|T_i|-1$ for each $2\leq i\leq\ell-1$, $n_\ell=|T'_\ell|-1$, and $\mathbf{n}=(n_1,\ldots,n_\ell)$, so that $(T_1,\ldots,T_{\ell-1},T_\ell')$ is an $\mathbf{n}$-decomposition of $T$. Noting that $\ell\geq 2$ as $|T_1'|=n>N+1$, and since the process stopped after $\ell$ and not $\ell-1$ iterations, we have $n_\ell\leq N$ and $|T_{\ell-1}'|>N$, the latter of which implies $n_\ell\geq \gamma |T_{\ell-1}'|/2\Delta-1>\gamma N/3\Delta$. It is left then only to show that $\mathbf{n}$ is $(\frac\gamma{4\Delta},2\gamma)$-descending.

For each $i\in[\ell-1]$, we have $|T_{i+1}'|\leq \gamma |T_i'|$, so that $|T_i'|\geq |T_i|=1+|T_i'|-|T_{i+1}'|\geq 1+(1-\gamma)|T_i'|$. Thus, for every $i\in[\ell]$, we have $(1-\gamma)|T_i'|\leq n_i\leq |T_i'|$. Then, for each $i\in[\ell-1]$, we have $n_{i+1}\leq |T_{i+1}'|\leq \gamma |T_i'|\leq \gamma n_i/(1-\gamma)\leq 2\gamma n_i$. Furthermore, for each $i\in[\ell-1]$, we have $n_{i+1}\geq (1-\gamma)|T_{i+1}'|\geq (1-\gamma)\gamma |T_i'|/2\Delta\geq (1-\gamma)\gamma n_i/2\Delta\geq \gamma n_i/4\Delta$. Thus, $\mathbf{n}$ is $(\frac\gamma{4\Delta},2\gamma)$-descending, as required.
\end{proof}

We will use Lemma~\ref{lemma:ngammadecomposition} directly, as well as through the following corollary.

\begin{corollary}\label{corollary:ngammadecomposition}
Let $0<\gamma<1/4$ and let $n,k,\Delta\in\mathbb N$ satisfy $n>(8\Delta/\gamma)^{k+1}$. Then for any $n$-vertex tree $T$ with maximum degree at most $\Delta$ and any $t\in V(T)$,  there exists a $(\frac\gamma{8\Delta},2\gamma)$-descending  tuple $\mathbf{n}=(n_1,\ldots,n_k)\in\mathbb{N}^{k}$ such that $T$ has an $\mathbf{n}$-decomposition $(T_1,\ldots,T_k)$ with $t\in V(T_1)$.
\end{corollary}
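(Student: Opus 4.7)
The plan is to construct the decomposition by iterating Proposition~\ref{prop:splittree} exactly $k-1$ times. Set $T_1' = T$ and $t_1 = t$, and for each $i = 1, \ldots, k-1$, apply Proposition~\ref{prop:splittree} with parameter $\gamma$ to $T_i'$ and the distinguished vertex $t_i$, obtaining edge-disjoint subtrees $T_i$ and $T_{i+1}'$ whose union is $T_i'$ and which share a single vertex $t_{i+1}$ that is a leaf of $T_i$, with $t_i \in V(T_i) \setminus V(T_{i+1}')$ and $\gamma |T_i'|/2\Delta \leq |T_{i+1}'| \leq \gamma |T_i'|$. Finally set $T_k := T_k'$. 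By construction, $(T_1, \ldots, T_k)$ is an $\mathbf{n}$-decomposition of $T$ for the tuple $\mathbf{n} = (|T_1|, |T_2|-1, \ldots, |T_k|-1)$, and $t = t_1 \in V(T_1)$.

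Writing $m_i := |T_i'|$, iterating the bounds from Proposition~\ref{prop:splittree} gives $(\gamma/2\Delta)^{i-1} n \leq m_i \leq \gamma^{i-1} n$ for each $i \in [k]$. Combined with the hypothesis $n > (8\Delta/\gamma)^{k+1}$, a short calculation (writing $y = \Delta/\gamma$, the product $(1/2y)^{k-2}(8y)^{k+1}$ equals $2^{2k+5}y^3$) shows $m_i \gg \Delta/\gamma$ for all $i \leq k-1$, so each of the $k-1$ calls to Proposition~\ref{prop:splittree} is valid. The same estimate also guarantees $m_k \geq 2$, which will be needed below.

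To verify the $(\gamma/8\Delta, 2\gamma)$-descending property, note that for each $i < k$ we have $(1-\gamma)m_i \leq n_i \leq m_i$ (since $n_i$ differs from $m_i - m_{i+1}$ by at most $1$ and $m_{i+1} \leq \gamma m_i$), while $n_k = m_k - 1$. The upper ratio is then immediate: for any $i \in [k-1]$, $n_{i+1} \leq m_{i+1} \leq \gamma m_i \leq \gamma n_i/(1-\gamma) \leq 2\gamma n_i$, using $\gamma < 1/2$. For the lower ratio with $i \leq k-2$, $n_{i+1} \geq (1-\gamma)m_{i+1} \geq (1-\gamma)\gamma m_i/2\Delta \geq \gamma n_i/4\Delta$. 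For the last step $i = k-1$, the $-1$ in $n_k = m_k - 1$ is absorbed by the lower bound on $m_k$: since $m_k \geq 2$, $n_k \geq m_k/2 \geq \gamma m_{k-1}/4\Delta \geq \gamma n_{k-1}/8\Delta$. The main (minor) care in the argument, and the reason the descending ratio is weakened from $\gamma/4\Delta$ in Lemma~\ref{lemma:ngammadecomposition} to $\gamma/8\Delta$ here, is precisely to absorb this $-1$ slack at the final level; this is also why the hypothesis is strengthened from a linear condition on $n$ to $n > (8\Delta/\gamma)^{k+1}$, which ensures enough headroom for $k-1$ consecutive geometric shrinkings.
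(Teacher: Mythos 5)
Your proof is correct but takes a different and more direct route than the paper. The paper invokes Lemma~\ref{lemma:ngammadecomposition} with $\gamma/2$ in place of $\gamma$ and target size $N=\tfrac{1}{2}(\gamma/8\Delta)^k n$, obtaining a $(\gamma/8\Delta,\gamma)$-descending tuple of some length $\ell\geq k$, and then merges the pieces $T_k',\ldots,T_\ell'$ into a single tail piece $T_k$; the factor $2\gamma$ in the upper ratio is there precisely to absorb the geometric sum of the merged tail. You instead apply Proposition~\ref{prop:splittree} exactly $k-1$ times, verify directly via the hypothesis $n>(8\Delta/\gamma)^{k+1}$ that each application is legitimate, and set $T_k=T_k'$, absorbing the off-by-one in $n_k=|T_k|-1$ via $m_k\geq 2$. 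Both are valid; the paper's route is modular (reusing Lemma~\ref{lemma:ngammadecomposition} as a black box), while yours is self-contained and arguably more transparent for this specific corollary. One small quibble with your closing remark: your own estimate actually gives $n_k\geq \gamma n_{k-1}/4\Delta$ at the last step (since $m_{k-1}\geq n_{k-1}$, you do not truly lose a factor of $2$ there), so the statement's $\gamma/8\Delta$ is not forced by the $-1$ slack in your argument; in the paper's proof that constant instead comes from the rescaling $\gamma\mapsto\gamma/2$ when calling Lemma~\ref{lemma:ngammadecomposition}.
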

\begin{proof} Let $N=\frac{1}{2}(\frac{\gamma}{8\Delta})^kn$ and note that $N>\Delta/\gamma$. Then, by Lemma~\ref{lemma:ngammadecomposition}, there is  a $(\frac\gamma{8\Delta},\gamma)$-descending tuple $\mathbf{n}'=(n'_1,\ldots,n'_\ell)\in\mathbb{N}^{\ell}$ with $\frac\gamma{4\Delta}N\leq n_\ell\leq N$ such that $T$ has an $\mathbf{n}$-decomposition $(T_1',\ldots,T_\ell')$. Now, as $N\geq |T_{\ell}'|-1\geq (\gamma/8\Delta)^\ell|T_1'|\geq (\gamma/8\Delta)^\ell n/2$, we have that $\ell\geq k$. Let $T_i=T_i'$ for each $i\in [k-1]$ and $T_k=\bigcup_{i=k}^\ell T_i'$. Let $n_i=n'_i$ for each $i\in [k-1]$, and $n_k=\sum_{i=k}^{\ell}n_i'-(\ell-k)$, and let $\mathbf{n}=(n_1,\ldots,n_k)$. Note that $\mathbf{n}$ is $(\frac\gamma{8\Delta},2\gamma)$-descending, and $(T_1,\ldots,T_k)$ is an $\mathbf{n}$-decomposition of $T$.
\end{proof}


\subsection{Another expansion property}\label{sec:furtherexpand}
We will use the following variant of Proposition~\ref{prop:expander:subgraphnew}, which is proved in a similar manner.

\begin{restatable}{proposition}{propexpand}\label{prop:expander:subgraphnewnew}
Let $n_0,m\in\mathbb N$ and $d>0$, and let $G$ be an $(m,n_0)$-joined graph which contains a set $V\subset V(G)$ with $|V|\geq n_0+(2d+2)m$.
Then, there exists some $W\subset V(G)$ such that $|W|<m$ and, for each $U\subset V(G)\setminus W$ with $|U|\leq m$,
$|N_G(U,V\setminus W)|\geq d|U|$.
\end{restatable}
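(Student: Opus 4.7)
The plan is to mirror the proof of Proposition~\ref{prop:expander:subgraphnew}, replacing the use of $|N_G(\cdot)|$ by $|N_G(\cdot, V \setminus \cdot)|$ throughout and letting the set $V$ play the role previously played by $V(G)$. Specifically, I would take $W \subset V(G)$ to be a maximal subset subject to $|W| < 2m$ and $|N_G(W, V\setminus W)| \leq d|W|$, which is non-vacuous as $W = \emptyset$ satisfies these conditions.

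The first step is to argue that in fact $|W| < m$. Assume for contradiction that $|W| \geq m$, and pick any $W' \subseteq W$ with $|W'| = m$. Since $G$ is $(m, n_0)$-joined, the set of vertices of $V(G) \setminus W'$ with no neighbour in $W'$ has size at most $n_0 - 1$; restricting to $V$ gives $|N_G(W') \cap V| \geq |V| - m - (n_0-1) \geq (2d+1)m + 1$. Subtracting at most $|W \setminus W'| = |W| - m$ vertices lost to $W$, we obtain
\[
|N_G(W', V\setminus W)| \geq (2d+1)m + 1 - (|W| - m) = (2d+2)m - |W| + 1.
\]
Since $N_G(W', V\setminus W) \subseteq N_G(W, V\setminus W)$ (any vertex outside $W$ with a neighbour in $W' \subseteq W$ lies in $N_G(W)$), and since $|W| < 2m$ gives $(d+1)|W| < (2d+2)m < (2d+2)m + 1$, we conclude $|N_G(W, V \setminus W)| > d|W|$, contradicting the definition of $W$.

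The second step is to verify the expansion property. Fix any nonempty $U \subseteq V(G) \setminus W$ with $|U| \leq m$. Then $W \cup U$ is strictly larger than $W$ and has size at most $|W| + |U| < 2m$, so the maximality of $W$ forces
\[
|N_G(W \cup U, V \setminus (W \cup U))| > d(|W| + |U|).
\]
Since $V \setminus (W\cup U) \subseteq V \setminus W$ and every vertex of $N_G(W \cup U, V \setminus (W\cup U))$ has a neighbour in $W$ or in $U$, the left side is bounded above by $|N_G(W, V\setminus W)| + |N_G(U, V\setminus W)| \leq d|W| + |N_G(U, V\setminus W)|$. Rearranging yields $|N_G(U, V\setminus W)| > d|U|$, as required.

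The only step that requires any care is the final inclusion chasing, where one must be sure that the external-neighbourhood notation interacts correctly with the various set differences (in particular that $N_G(W', V\setminus W) \subseteq N_G(W, V\setminus W)$, using that $W' \subseteq W$ so vertices in $N_G(W')$ outside $W$ are in $N_G(W)$). Everything else is a direct adaptation of the earlier proposition, so I expect no further obstacle.
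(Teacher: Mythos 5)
Your proof is correct and follows essentially the same approach as the paper's: take $W$ maximal subject to $|W|<2m$ and having a small neighbourhood in $V$, use $(m,n_0)$-joinedness to force $|W|<m$, and then apply maximality to $W\cup U$ to obtain the expansion bound. The only differences are cosmetic: the paper takes $W\subset V$ rather than $W\subset V(G)$ and writes the condition as $|N_G(W,V)|\leq d|W|$ rather than $|N_G(W,V\setminus W)|\leq d|W|$ (these coincide, since $N_G$ denotes the external neighbourhood), and for the bound $|W|<m$ the paper applies joinedness directly to $W$ while you pass through a subset $W'\subseteq W$ of size exactly $m$; both calculations give the same contradiction.
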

\begin{proof}
Let $W\subset V$ be a maximal subset subject to $|W|< 2m$ and $|N_G(W,V)|\leq d|W|$, possible as $W=\emptyset$ satisfies these conditions. Note that if $|W|\geq m$, then, as $G$ is $(m,n_0)$-joined, $|V\setminus (W\cup N_G(W)|<n_0$, so that $|N_G(W,V)|\geq |V|-|W|-n_0\geq 2dm\geq d|W|$, a contradiction. Thus, we have $|W|<m$.

Now, if $U\subset V(G)\setminus W$ satisfies $|U|\leq m$ and $U\neq\emptyset$, then, as $|W|< m$, we have $|U\cup V|< 2m$ and $|U\cup W|>|W|$. To avoid contradicting the choice of $W$, we must have $|N_G(U\cup W,V)|\geq d|U\cup W|$, and hence $|N_{G}(U,V\setminus W)|\geq d|U\cup W|-d|W|\geq d|U|$. Thus, for each $U\subset V(G)$ with $|U|\leq m$,
$|N_G(U,V\setminus W)|\geq d|U|$.
\end{proof}

\subsection{Tree embeddings and $(d,m)$-extendability}\label{section:extendability}\label{sec:extendability}
As noted in Section~\ref{sec:sketch}, to carry out our tree embedding schemes we use techniques that come from combining a development by Haxell~\cite{haxell} of a tree embedding method of Friedman and Pippenger~\cite{FP1987} with a `roll-back' idea of Johannsen (see~\cite{draganic2022rolling}) as used by Draganic, Krivelevich and Nenadov~\cite{draganic2022rolling}. We use the language of extendability (first used by Glebov, Johannsen, and Krivelevich~\cite{Glebov2013}): in a graph with certain expansion conditions we embed a tree iteratively, each time adding a leaf so that the embedded subgraph maintains an `extendability condition'. Key to the flexibility of this method is that removing a leaf (`rolling-back') maintains the extendability condition.


The key definition here is that of a \textit{$(d,m)$-extendable subgraph} $S$ in a graph $G$, which is a subgraph with the property that every small subset of $V(G)$ has many neighbours outside $S$. Note that the following definition uses the set $N'_G(U)=\cup_{v\in U}N_G(v)$.
\begin{definition}Let $d,m\in\mathbb N$ be such that $d\ge 3$ and $m\ge 1$. Let $G$ be a graph and let $S\subset G$ be a subgraph. We say that $S$ is \emph{$(d,m)$-extendable in $G$} if $S$ has maximum degree at most $d$ and, for all $U\subset V(G)$ with $1\le |U|\le 2m$, we have
    \begin{equation}\label{def:extendability}
        |N_G'(U)\setminus V(S)|\ge (d-1)|U|-\sum_{u\in U\cap V(S)}(d_S(u)-1).
    \end{equation}
\end{definition}
The next lemma shows how to verify the extendability condition using only the external neighbourhood.

\begin{proposition}\label{prop:weakerextendability}
Let $d,m\in\mathbb N$ be such that $d\ge 3$ and $m\ge 1$. Let $G$ be a graph and let $S\subset G$ be a subgraph with maximum degree at most $d$. If, for all $U\subset V(G)$ with $1\leq|U|\leq 2m$ we have $|N(U,V(G)\setminus V(S))|\geq d|U|$, then $S$ is $(d,m)$-extendable.
\end{proposition}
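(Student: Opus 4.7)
The plan is to derive the extendability inequality~\eqref{def:extendability} as an immediate consequence of the hypothesised lower bound on the external neighbourhood into $V(G)\setminus V(S)$. The maximum-degree condition on $S$ is already part of the hypothesis, so only the displayed inequality needs to be verified.

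Fix $U\subset V(G)$ with $1\le |U|\le 2m$. First I would compare the two notions of neighbourhood appearing in the argument. Since by definition $N_G(U)=N'_G(U)\setminus U$, one has the inclusion
\[
N(U,V(G)\setminus V(S))\;=\;N_G(U)\cap(V(G)\setminus V(S))\;\subseteq\;N'_G(U)\setminus V(S),
\]
and so the hypothesis immediately yields $|N'_G(U)\setminus V(S)|\ge d|U|$.

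Second, I would check that $d|U|$ always dominates the right-hand side of~\eqref{def:extendability}. After rearrangement, the inequality $d|U|\ge (d-1)|U|-\sum_{u\in U\cap V(S)}(d_S(u)-1)$ is equivalent to
\[
\bigl(|U|-|U\cap V(S)|\bigr)+\sum_{u\in U\cap V(S)}d_S(u)\;\ge\;0,
\]
which is trivial since both summands are nonnegative. Combining the two steps gives~\eqref{def:extendability} and completes the verification that $S$ is $(d,m)$-extendable.

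I do not foresee any substantive obstacle: the proposition simply packages the observation that the purely external expansion hypothesis is strictly stronger than what extendability demands, since~\eqref{def:extendability} deliberately builds in a defect term absorbing the $S$-degree on $U\cap V(S)$. That slack is what will later permit leaves of $S$ to be added one-by-one without destroying extendability, but for the present statement it only means that the bound we need is looser than the bound we already have.
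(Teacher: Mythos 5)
Your proof is correct and takes essentially the same approach as the paper: both compare $N(U,V(G)\setminus V(S))$ with $N'_G(U)\setminus V(S)$ and then observe that $d|U|$ already exceeds the right-hand side of~\eqref{def:extendability}, your version merely spelling out the trivial rearrangement that the paper leaves implicit.
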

\begin{proof} For each $U\subset V(G)$ with $1\leq |U|\leq 2m$, we have
\[
|N'(U)\setminus V(S)|\geq|N(U)\setminus V(S)|=|N(U,V(G)\setminus V(S))|\geq d|U|\geq(d-1)|U|-\sum_{u\in U\cap V(S)}(d_S(u)-1),
\]
as required.
\end{proof}
The next three lemmas are the core of the extendability method, and can be found as Lemmas 5.2.6, 5.2.7 and 5.2.8 in~\cite{Glebov2013}. They will allow us to manipulate an extendable graph by adding/removing a vertex or an edge while remaining extendable. The latter two are simple to verify, while the first follows the original inductive step in the argument by Haxell~\cite{haxell}. The first lemma we state in our $(m,m')$-joined language, and it follows from Lemma~5.2.6 in~\cite{Glebov2013} by observing that in such a graph $G$ any set $U$ of $m$ vertices satisfies $|N_G(U)|\geq |G|-m'-|U|= |G|-m'-m$.

\begin{lemma}[Adding a leaf]\label{lemma:adding:leaf}
Let $d,m,m'\in\mathbb N$ be such that $d\ge 3$ and $m,m'\ge 1$, and let $G$ be an $(m,m')$-joined graph. Let $S$ be a $(d,m)$-extendable subgraph of $G$ such that $|G|\ge |S|+(2d+2)m+m'+1$. Then, for every $s\in V(S)$ with $d_S(s)\leq d-1$, there exists $y\in N_G(s)\setminus V(S)$ such that $S+sy$ is $(d,m)$-extendable.
\end{lemma}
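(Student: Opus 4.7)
The plan is a Haxell-style contradiction argument via a defect function. For $U\subseteq V(G)$ with $1\le |U|\le 2m$, define
\[
f(U)=|N'_G(U)\setminus V(S)|-(d-1)|U|+\sum_{u\in U\cap V(S)}(d_S(u)-1),
\]
so that the $(d,m)$-extendability of $S$ is equivalent to $f(U)\ge 0$ on this range. Applying this at the singleton $\{s\}$ gives $|W|\ge d-d_S(s)\ge 1$, where $W:=N_G(s)\setminus V(S)$. For any $y\in W$, the graph $S+sy$ satisfies the degree cap (since $d_S(s)\le d-1$ and the new vertex $y$ has degree $1$), and a direct count shows its defect at $U$ equals $f(U)-[y\in N'_G(U)]+[s\in U]$, where $[\cdot]$ is the Iverson bracket.

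The key observation is that the $(m,m')$-joined hypothesis together with $|V(G)|\ge |V(S)|+(2d+2)m+m'+1$ force any tight set (one with $f(U)=0$) in the admissible range to have $|U|\le m-1$. Indeed, if $|U|\ge m$ then picking any $m$-subset $A\subseteq U$ and using joinedness gives
\[
|N'_G(U)\setminus V(S)|\ge |N_G(A)\setminus V(S)|\ge |V(G)|-m-m'+1-|V(S)|\ge (2d+1)m+2;
\]
rewriting $f(U)=0$ as $|N'_G(U)\setminus V(S)|=(d-1)|U\setminus V(S)|+\sum_{u\in U\cap V(S)}(d-d_S(u))$ and using $d_S(u)\le d$ yields $|N'_G(U)\setminus V(S)|\le d|U|\le 2dm<(2d+1)m+2$, contradicting the joined lower bound. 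Hence any tight $U$ of admissible size has $|U|\le m-1$.

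Now suppose for contradiction that $S+sy$ fails to be $(d,m)$-extendable for every $y\in W$. For each $y$ there is a witness $U_y$ with $|U_y|\le 2m$ of negative $S+sy$-defect; integrality together with $f(U_y)\ge 0$ then forces $f(U_y)=0$, $s\notin U_y$, $y\in N'_G(U_y)$, and in particular $|U_y|\le m-1$ by the previous step. Using submodularity $f(A\cup B)+f(A\cap B)\le f(A)+f(B)$, immediate from $N'(A\cup B)=N'(A)\cup N'(B)$ and $N'(A\cap B)\subseteq N'(A)\cap N'(B)$, I iteratively merge these witnesses: starting from $U^*:=\emptyset$, as long as some $y\in W\setminus N'_G(U^*)$ exists, replace $U^*$ by $U^*\cup U_y$. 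Then $|U^*\cup U_y|\le 2m-2<2m$ is in the valid range, extendability gives $f(U^*\cup U_y)\ge 0$, submodularity gives $f(U^*\cup U_y)\le 0$, so $f(U^*\cup U_y)=0$ and $|U^*\cup U_y|\le m-1$ by the tight-set bound. Since $y\in N'_G(U_y)\setminus N'_G(U^*)$ forces $U_y\not\subseteq U^*$, the set $U^*$ strictly grows each step, so the process terminates with $|U^*|\le m-1$, $s\notin U^*$, $f(U^*)=0$, and $W\subseteq N'_G(U^*)$.

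Finally, applying extendability to $U^*\cup\{s\}$, which has size at most $m$, a direct computation gives
\[
f(U^*\cup\{s\})=f(U^*)+|W\setminus N'_G(U^*)|-\bigl(d-d_S(s)\bigr)\ge 0,
\]
so $|W\setminus N'_G(U^*)|\ge d-d_S(s)\ge 1$, contradicting $W\subseteq N'_G(U^*)$. The main obstacle I expect is establishing the tight-set size bound in the second paragraph: the joined hypothesis and the precise $(2d+2)m+m'+1$ slack must be balanced exactly so that the inequality $d|U|<|N'_G(U)\setminus V(S)|$ forces $|U|<m$; once this is in hand, the submodular merge and the final defect computation on $U^*\cup\{s\}$ fall out cleanly.
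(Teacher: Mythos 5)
Your argument is correct and is essentially the standard Haxell-style tight-set argument that the paper itself defers to (the paper states this lemma without proof, citing Lemma~5.2.6 of Glebov--Johannsen--Krivelevich and Haxell's original inductive step, after translating the $(m,m')$-joined hypothesis into the lower bound $|N_G(A)|\geq |G|-m-m'+1$ for $m$-sets $A$, which is precisely the bound you use to force tight sets below size $m$). The only stylistic difference is that you build the union of small tight sets by an explicit submodular merge starting from $\emptyset$, whereas the cited sources phrase the same step via a maximal tight set avoiding $s$; these are interchangeable, and the final contradiction at $U^*\cup\{s\}$ is identical.
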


\begin{lemma}[Removing a leaf]\label{lemma:remove:leaves}Let $d,m\in\mathbb N$ be such that $d\ge 3$ and $m\ge 1$, let $G$ be a graph and let $S\subset G$ be a subgraph of $G$. Suppose that there exist vertices $s\in V(S)$ and $y\in N_G(s)\setminus V(S)$ so that $S+sy$ is $(d,m)$-extendable. Then, $S$ is $(d,m)$-extendable.
\end{lemma}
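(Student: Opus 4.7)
The plan is to verify the two conditions in the definition of $(d,m)$-extendability for $S$ directly from those for $S' := S + sy$. The maximum degree condition is immediate, since $S$ is a subgraph of $S'$, so $\Delta(S)\leq\Delta(S')\leq d$. For the expansion inequality~\eqref{def:extendability}, I would fix an arbitrary $U\subseteq V(G)$ with $1\leq|U|\leq 2m$ and compare, term by term, the two sides of the inequality for $S$ with the corresponding sides for $S'$, which is known to hold by hypothesis.

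Passing from $S'$ to $S$ removes the vertex $y$ and decreases the degree of $s$ by one. On the left-hand side, since $V(S')=V(S)\cup\{y\}$ with $y\notin V(S)$, we have $|N'_G(U)\setminus V(S)| = |N'_G(U)\setminus V(S')| + \varepsilon_1$, where $\varepsilon_1\in\{0,1\}$ equals $1$ precisely when $y\in N'_G(U)$. On the right-hand side, $d_{S'}$ differs from $d_S$ only at $s$ (up by one) and at $y$ (where the contribution $d_{S'}(y)-1=0$ is trivial); a short bookkeeping yields
\[
\sum_{u\in U\cap V(S')}(d_{S'}(u)-1) = \sum_{u\in U\cap V(S)}(d_S(u)-1) + \varepsilon_2,
\]
where $\varepsilon_2\in\{0,1\}$ equals $1$ precisely when $s\in U$.

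Plugging both comparisons into the extendability inequality for $S'$, the inequality we want for $S$ reduces to $\varepsilon_1\geq\varepsilon_2$. This is immediate: whenever $s\in U$, the edge $sy\in E(G)$ forces $y\in N_G(s)\subseteq N'_G(U)$, so $\varepsilon_2=1$ entails $\varepsilon_1=1$. There is no substantive obstacle here; the one thing to check carefully is the sign of the $\varepsilon_2$ correction on the right-hand side, since a careless computation can flip it and make the argument collapse in the wrong direction.
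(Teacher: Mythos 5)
Your proof is correct. The paper does not give its own argument for this lemma but cites it as Lemma 5.2.7 of Glebov's thesis (the reference \cite{Glebov2013}), noting only that it is ``simple to verify''; your direct verification — comparing both sides of~\eqref{def:extendability} for $S$ and $S+sy$, observing that the left side can only drop by the indicator of $y\in N'_G(U)$ while the right side can only drop by the indicator of $s\in U$, and that $s\in U$ forces $y\in N_G(s)\subseteq N'_G(U)$ — is exactly the intended bookkeeping, and your caveat about the sign of $\varepsilon_2$ is the right thing to double-check.
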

\begin{lemma}[Adding an edge]\label{lemma:adding:edge} Let $d,m\in\mathbb N$ be such that $d\ge 3$ and $m\ge 1$. Let $G$ be a graph and let $S$ be a $(d,m)$-extendable subgraph of $G$. If $s,t\in V(S)$ with $d_S(s),d_S(t)\le d-1$ and $st\in E(G)$, then $S+st$ is $(d,m)$-extendable in $G$.
\end{lemma}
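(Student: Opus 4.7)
The plan is to verify the two conditions defining $(d,m)$-extendability for the subgraph $S' := S + st$ directly, using the corresponding conditions for $S$. First I would check the maximum degree condition. Since the only vertices whose degree changes when passing from $S$ to $S'$ are $s$ and $t$, and both gain exactly one in degree, the hypothesis $d_S(s), d_S(t) \le d-1$ immediately yields $d_{S'}(s), d_{S'}(t) \le d$, so $\Delta(S') \le d$. This is the one place where the assumption $d_S(s), d_S(t) \le d-1$ is used, and it is also the only step that could fail in principle — hence the "main obstacle", though it is essentially a one-line observation.

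Next I would turn to the expansion inequality \eqref{def:extendability} for $S'$. The key observation is that $V(S') = V(S)$ because both $s$ and $t$ already lie in $V(S)$; consequently, for any $U\subset V(G)$ with $1\le |U|\le 2m$ the left-hand side $|N'_G(U)\setminus V(S')|$ coincides with $|N'_G(U)\setminus V(S)|$, which by the $(d,m)$-extendability of $S$ is bounded below by
\[
(d-1)|U| - \sum_{u\in U\cap V(S)}(d_S(u)-1).
\]

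Finally I would compare this lower bound with the right-hand side of \eqref{def:extendability} applied to $S'$. Since $d_{S'}(u) = d_S(u)$ for $u\notin\{s,t\}$ while $d_{S'}(s) = d_S(s)+1$ and $d_{S'}(t) = d_S(t)+1$, we get
\[
\sum_{u\in U\cap V(S')}(d_{S'}(u)-1) \;=\; \sum_{u\in U\cap V(S)}(d_S(u)-1) \;+\; |\{s,t\}\cap U|.
\]
Thus the target quantity $(d-1)|U| - \sum_{u\in U\cap V(S')}(d_{S'}(u)-1)$ is no larger than the lower bound already established, and the expansion condition for $S'$ follows. Combined with the maximum-degree check in the first paragraph, this shows $S'$ is $(d,m)$-extendable in $G$. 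No non-trivial combinatorial argument or probabilistic input is required; the whole proof is essentially bookkeeping about how degrees and vertex sets change upon adding a single edge with both endpoints in $V(S)$.
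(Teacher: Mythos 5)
Your proof is correct, and since the paper does not give its own argument for this lemma (it cites Lemma 5.2.8 of~\cite{Glebov2013} and merely remarks that adding/removing an edge is ``simple to verify''), your direct verification is exactly the intended one. You correctly observe that $V(S+st)=V(S)$ so the left-hand side of \eqref{def:extendability} is unchanged, that the degree hypothesis is used only for the $\Delta(S+st)\le d$ check, and that the sum $\sum_{u\in U\cap V(S)}(d_{S+st}(u)-1)$ can only increase, which makes the required right-hand side only smaller.
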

We will use Lemma~\ref{lemma:adding:leaf} through the follow corollary where a tree is built on to an extendable subgraph by applying it iteratively (see also Corollary~3.7 in~\cite{montgomery2019}).
\begin{corollary}\label{corollary:extendable:embedding}
Let $d,m,m'\in\mathbb N$ be such that $d\ge 3$, and let $G$ be an $(m,m')$-joined graph. Let $T$ be a tree with $\Delta(T)\le d/2$ and let $R$ be a $(d,m)$-extendable subgraph of $G$ with maximum degree at most $d/2$. If $|R|+|T|\le|G|-(2d+2)m-m'$, then for every vertex $t\in V(T)$ and $v\in V(R)$, there is a copy $S$ of $T$ in $G-(V(R)\setminus \{v\})$ in which $t$ is copied to $v$ and $S\cup R$ is a $(d,m)$-extendable subgraph of $G$.
\end{corollary}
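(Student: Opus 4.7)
\medskip

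\noindent\textbf{Proof proposal for Corollary~\ref{corollary:extendable:embedding}.}

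The plan is to build the copy of $T$ one vertex at a time, extending $R$ by a new leaf at each step, and to invoke Lemma~\ref{lemma:adding:leaf} (``adding a leaf'') as the workhorse. First I would fix a linear order $t_1,t_2,\ldots,t_{|T|}$ of $V(T)$ with $t_1=t$ and with the property that, for each $i\ge 2$, the vertex $t_i$ has a unique neighbour $t_{\phi(i)}$ in $\{t_1,\ldots,t_{i-1}\}$ (for instance, the vertex order given by a BFS of $T$ rooted at $t$). Setting $S_0=R$ and declaring $t_1$ to be embedded at $v$, I would then iteratively construct a sequence of $(d,m)$-extendable subgraphs $S_0\subseteq S_1\subseteq \cdots\subseteq S_{|T|-1}$ of $G$, where at step $i\in\{2,\ldots,|T|\}$ the subgraph $S_{i-1}$ is enlarged by picking a new neighbour $y$ of the image of $t_{\phi(i)}$ outside $V(S_{i-2})$ via Lemma~\ref{lemma:adding:leaf}, setting the image of $t_i$ to be $y$, and letting $S_{i-1}:=S_{i-2}+\psi(t_{\phi(i)})y$.

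To apply Lemma~\ref{lemma:adding:leaf} at each step I need to verify two conditions. The first is the size bound $|G|\ge |S_{i-2}|+(2d+2)m+m'+1$; since $|S_{i-2}|=|R|+i-2\le |R|+|T|-2$, this is immediate from the hypothesis $|R|+|T|\le|G|-(2d+2)m-m'$. The second is that the image $x$ of $t_{\phi(i)}$ satisfies $d_{S_{i-2}}(x)\le d-1$. For this, the main obstacle is to check that the degree never accumulates too much at $v$, which is the only vertex where $R$-edges and tree edges can both contribute. If $t_{\phi(i)}\ne t_1$ then $x\notin V(R)$ and its degree in $S_{i-2}$ counts only previously-added tree-neighbours of $t_{\phi(i)}$, so $d_{S_{i-2}}(x)\le d_T(t_{\phi(i)})-1\le d/2-1$. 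If $t_{\phi(i)}=t_1$, then $x=v$, and $d_{S_{i-2}}(v)\le d_R(v)+(d_T(t_1)-1)\le d/2+d/2-1=d-1$, as required. In either case the degree bound holds with room to spare.

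Finally, taking $S$ to be the subgraph of $S_{|T|-1}$ induced on the images $\{v,\psi(t_2),\ldots,\psi(t_{|T|})\}$ with the edges added during the construction, $S$ is a copy of $T$ in $G-(V(R)\setminus\{v\})$ with $t\mapsto v$, and $S\cup R=S_{|T|-1}$ is $(d,m)$-extendable by construction. This is exactly the conclusion of the corollary.
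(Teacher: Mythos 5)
Your proof is correct and is exactly the argument the paper has in mind: the paper does not spell out a proof for this corollary but states that it follows by iterating Lemma~\ref{lemma:adding:leaf} (see also Corollary~3.7 in~\cite{montgomery2019}), which is precisely what you do. The two verifications that matter — the size bound $|G|\ge |S_{i-2}|+(2d+2)m+m'+1$ and the degree bound at the vertex being extended, in particular at $v$ where contributions from $R$ and from $T$ combine to give $d_R(v)+(d_T(t)-1)\le d-1$ — are both handled correctly.
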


\subsection{Concentration results}\label{sec:conc}
We will use the following standard version of Chernoff's bound for binomial or hypergeometric random variables
(see, e.g.,~\cite{JLR2000} for the standard definition of such variables with parameters $n$ and $p$ and with parameters $N$, $n$ and $m$, respectively).
\begin{lemma}[see, e.g., Corollary 2.3 and Theorem 2.10 in~\cite{JLR2000}]\label{hypergeom}\label{lemma:chernoff}
Let $X$ be a hypergeometric random variable with parameters $N$, $n$ and $m$, or a binomial random variable with parameters $n$ and $p$. Then, for any $0<\eps\leq 3/2$,
\[
\P(|X-\E X|\geq \eps \E X)\leq 2\exp\left(-\frac{\eps^2}{3}\E X\right).
\]
\end{lemma}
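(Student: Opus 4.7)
The plan is to use the Cramér--Chernoff exponential-moment method: apply Markov's inequality to $e^{tX}$ for a well-chosen $t>0$, optimize the resulting bound, and repeat with $t<0$ for the lower tail.

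First I would dispose of the binomial case. Writing $X=\sum_{i=1}^{n}X_i$ with $X_i$ i.i.d.\ Bernoulli$(p)$, independence and the inequality $1+x\le e^x$ give
\[
\E[e^{tX}]=(1-p+pe^t)^n\le \exp(\mu(e^t-1)),
\]
where $\mu=\E X=np$. Markov's inequality applied to $e^{tX}$ with $t=\log(1+\eps)$ yields
\[
\P(X\ge(1+\eps)\mu)\le \exp\bigl(-\mu\bigl((1+\eps)\log(1+\eps)-\eps\bigr)\bigr),
\]
and an elementary calculus check shows $(1+\eps)\log(1+\eps)-\eps\ge \eps^2/3$ for $0\le\eps\le 3/2$, giving the upper tail. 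The lower tail follows analogously by taking $t=\log(1-\eps)<0$, and in fact yields the stronger exponent $\eps^2/2$; combining the two sides produces the factor of $2$ on the right-hand side.

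For the hypergeometric case I would invoke Hoeffding's classical moment-generating function comparison: if $X$ is hypergeometric with parameters $N,n,m$ and $Y$ is binomial with parameters $n$ and $p=m/N$, then $\E[e^{tX}]\le \E[e^{tY}]$ for every real $t$. The standard argument writes $X=\sum_{i=1}^{n} X_i$ for the indicators produced by successive draws without replacement, conditions successively on $X_1,\ldots,X_{k}$, and uses that the conditional mean of $\phi(X_{k+1})$ for any convex $\phi$ is dominated by its expectation under sampling with replacement---a convex-ordering / exchangeability argument. Once this MGF domination is in hand, the exponential-moment calculation from the binomial case carries over verbatim, with the same $\mu=\E X=nm/N$, producing the same tail bound.

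The main obstacle is the hypergeometric step: the binomial bound is completely routine, but the MGF comparison for sampling without replacement versus with replacement is the real content and requires either Hoeffding's coupling or the negative-association structure of the hypergeometric indicators. Everything else reduces to the standard Chernoff derivation plus the elementary inequality $(1+\eps)\log(1+\eps)-\eps\ge \eps^2/3$ on $[0,3/2]$.
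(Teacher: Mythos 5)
This lemma is stated in the paper as a cited result (``see, e.g., Corollary~2.3 and Theorem~2.10 in~\cite{JLR2000}'') and the paper supplies no proof of its own. Your proposal — the Cram\'er--Chernoff exponential-moment bound with $t=\log(1\pm\eps)$, the elementary inequality $(1+\eps)\log(1+\eps)-\eps\ge \eps^2/3$ on $[0,3/2]$, and Hoeffding's MGF domination of the hypergeometric by the matching binomial — is correct and is exactly the standard derivation underlying the cited reference, so there is nothing in the paper to contrast it with.
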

A sequence of random variables  $(X_i)_{i \geq 0}$ is a submartingale if $\E[X_{i+1} \mid X_0, \ldots, X_i] \geq X_i$ for each $i\geq 0$. We will use the following Azuma-type bound for submartingales.

\begin{lemma}[see, e.g., \cite{wormald1999differential}]\label{lem:azuma}
Let $(X_i)_{i \geq 0}$ be a submartingale and let $c_i>0$ for each $i\geq 1$. If $|X_i -X_{i-1}| <c_i$ for each $i\geq 1$, then, for each $n\geq 1$,
\[
\mathbb{P}(X_n-X_0\leq t) \leq 2 \exp \left(-\frac{t^2}{2\sum_{i=1}^nc_i^2} \right).
\]
\end{lemma}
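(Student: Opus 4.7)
The plan is a conditional exponential-moment (Chernoff) argument applied directly to the increments $D_i = X_i - X_{i-1}$. Writing $\mathcal{F}_i$ for the $\sigma$-algebra generated by $X_0,\dots,X_i$, the submartingale hypothesis gives $\E[D_i\mid\mathcal{F}_{i-1}]\geq 0$, while $|D_i|<c_i$ says each $D_i$ lies in an interval of length at most $2c_i$. The bound is only meaningful in the lower-tail regime (i.e.\ $t$ negative, or equivalently as a statement of the form $\P(X_n-X_0\leq -s)$ with $s>0$), which is what I will handle; the factor of $2$ out front just absorbs slack constants.

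The key step is the following one-sided conditional Hoeffding estimate: for each $i$ and every $\lambda\geq 0$,
\[
\E[e^{-\lambda D_i}\mid\mathcal{F}_{i-1}]\leq \exp\!\left(\tfrac{1}{2}\lambda^2 c_i^2\right).
\]
To see this, factor the left-hand side as $e^{-\lambda\E[D_i\mid\mathcal{F}_{i-1}]}\cdot\E\!\left[e^{-\lambda(D_i-\E[D_i\mid\mathcal{F}_{i-1}])}\mid\mathcal{F}_{i-1}\right]$. The first factor is at most $1$ because $\lambda\geq 0$ and $\E[D_i\mid\mathcal{F}_{i-1}]\geq 0$, while the second factor is bounded by $\exp(\lambda^2c_i^2/2)$ by the standard mean-zero Hoeffding lemma, since the centred random variable inside is supported in an interval of length at most $2c_i$.

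Iterating via the tower property, conditioning in turn on $\mathcal{F}_{n-1}, \mathcal{F}_{n-2},\dots,\mathcal{F}_0$, I obtain
\[
\E\!\left[e^{-\lambda(X_n-X_0)}\right] = \E\!\left[\prod_{i=1}^n e^{-\lambda D_i}\right]\leq \exp\!\left(\tfrac{\lambda^2}{2}\sum_{i=1}^n c_i^2\right).
\]
Markov's inequality then gives $\P(X_n-X_0\leq t)\leq \exp\!\left(\lambda t+\tfrac{\lambda^2}{2}\sum c_i^2\right)$ for every $\lambda\geq 0$, and optimising with $\lambda=-t/\sum c_i^2>0$ (valid since $t<0$) yields $\exp(-t^2/(2\sum c_i^2))$, which already beats the claimed $2\exp(-t^2/(2\sum c_i^2))$.

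The only non-routine ingredient is the one-sided conditional Hoeffding estimate above; the rest is a standard Chernoff computation. The point worth emphasising is that the submartingale sign $\E[D_i\mid\mathcal{F}_{i-1}]\geq 0$ is exactly what allows the drift factor $e^{-\lambda\E[D_i\mid\mathcal{F}_{i-1}]}\leq 1$ to be discarded for free in the direction $\lambda\geq 0$; if one instead first passed to the Doob-martingale part of $(X_i)$, the martingale differences would be bounded only by $2c_i$ rather than $c_i$, costing a factor of $4$ in the exponent of the final bound.
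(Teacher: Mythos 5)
The paper does not prove this lemma; it cites it (to Wormald 1999) and applies it once in the proof of Claim~\ref{claim:new}. There is thus no argument in the paper to compare against. Your proof is the standard exponential-moment derivation of Azuma's inequality for submartingales, and it is correct: the conditional factorisation $\E[e^{-\lambda D_i}\mid\mathcal F_{i-1}]=e^{-\lambda\E[D_i\mid\mathcal F_{i-1}]}\cdot\E[e^{-\lambda(D_i-\E[D_i\mid\mathcal F_{i-1}])}\mid\mathcal F_{i-1}]$ is valid because $\E[D_i\mid\mathcal F_{i-1}]$ is $\mathcal F_{i-1}$-measurable, the first factor is $\leq 1$ when $\lambda\geq 0$, the centred increment lies in an interval of length $2c_i$ (recentring does not change the length, which is the key point), and iterating via the tower property and optimising $\lambda$ gives $\exp(-t^2/(2\sum c_i^2))\leq 2\exp(-t^2/(2\sum c_i^2))$.

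Two remarks. First, you say the bound is merely not ``meaningful'' for $t\geq 0$, but it is in fact \emph{false} for some such $t$: since $|X_n-X_0|<\sum_i c_i$, we have $\P(X_n-X_0\leq t)=1$ once $t\geq\sum_i c_i$, whereas $2\exp(-t^2/(2\sum_i c_i^2))<1$ there for any $n\geq 2$. So the implicit restriction to $t<0$ (equivalently, reading the statement as $\P(X_n-X_0\leq -s)$ with $s\geq 0$) is necessary, not just convenient; the paper's sole application does take $t<0$ once an evident sign slip there is corrected (the displayed event should read $\sum(X_i-\alpha)\leq -\alpha\gamma n/2$). Second, your closing comment that passing to the Doob-martingale part would cost a factor of $4$ applies only to the ``symmetric bound'' form of Azuma (which uses $|M_i-M_{i-1}|\leq d_i$): the Doob increments still lie in an interval of \emph{length} $2c_i$, so the interval-form Hoeffding lemma you actually invoke loses nothing, and indeed your argument is exactly the interval-form Azuma on the Doob-martingale part with the drift discarded via $e^{-\lambda\E[D_i\mid\mathcal F_{i-1}]}\leq 1$.
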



\section{Proof of Theorem~\ref{theorem:dense}}\label{sec:proof:k=2:dense}
We now prove Theorem~\ref{theorem:dense}, which covers Theorem~\ref{corollary:k=2} when $m$ is at most some large constant depending on $\Delta$. As our $n$-vertex graph $G$ here is $(m,\mu n)$-joined (for some small $\mu$), sets with constant size (i.e., $m$) already have a neighbourhood covering almost all of the graph, making the graph very dense, with $\Theta(n^2/m)$ edges. Embedding a tree is not so difficult in this case, and, for example, we could use an approach of Balla, Pokrovskiy, and Sudakov~\cite{balla2018} if $T$ has linearly many leaves and an absorption approach like that in~\cite{montgomery2019} if $T$ does not have linearly many leaves. To take a unified approach, however, we will use an embedding inspired by work by the first author and Kathapurkar~\cite{kathapurkar2022spanning} and the first author and B{\"o}ttcher, Parczyk, and Person~\cite{bottcher2020embedding}.

To do this we first embed a small linear portion of the tree randomly, before extending this greedily vertex by vertex. Where a new leaf cannot be embedded because some embedded vertex, $w$ say, has no more unused neighbours, we show that a vertex in the set $U$ of (at least $m$) unused vertices can be swapped into the embedding to free up a neighbour of $w$, allowing the embedding to continue. This swapping property will follow from the random embedding of the first part of the tree, as the $(m,\mu n)$-joined property of $G$ implies that for some $u\in U$, many \emph{good} neighbours of $w$ have a large common neighbourhood with $u$, so that, for many of these good neighbours, $w'$ say, $w'$ will have a vertex of $T$ embedded to it with all of its neighbours in the embedding in $T$ embedded to common neighbours of $u$ and $w'$ (see Claim~\ref{claim:new}) -- exactly what we need to swap $w'$ with $u$, and then use $ww'$ to extend the embedding.

\begin{proof}[Proof of Theorem~\ref{theorem:dense}]
To recap our situation, we have $\mu\ll 1/\Delta,1/m$ with $m\leq \mu n$, an $n$-vertex $(m,\mu n)$-joined graph $G$, and an $(n-m+1)$-vertex tree $T$ with $\Delta(T)\leq \Delta$ to embed into $G$. Note that, moreover, we can assume that $1/n\ll \mu$, as we could take some $\mu'$ with $\mu\ll \mu'\ll 1/\Delta,1/m$ and prove this with $\mu'$ in place of $\mu$ first, where $m\leq \mu n$ implies that $1/n\leq \mu$ and thus $1/n\ll \mu'$.

Let $\gamma$ and $\beta$ satisfy $\mu \ll \gamma\ll \beta \ll 1/\Delta,1/m$.
Applying Proposition~\ref{prop:splittree} with an arbitrary $t\in V(T)$, we find subtrees $T_0$ and $T_1$ of $T$ sharing exactly one vertex $t_1$, such that $E(T)=E(T_1)\cup E(T_2)$ and $4\beta n\leq |T_0|\leq 8\Delta\beta n$. Label the remaining vertices in $T$ as $t_2,\ldots,t_{n-m+1}$ such that $T[\{t_1,\ldots,t_i\}]$ is a tree for each $i\in [n-m+1]$, and $T[t_1,\ldots,t_{|T_0|}]=T_0$. Moreover, do this so that, for a set $J\subset \{4,\ldots,|T_0|\}$ with $|J|= \gamma n$, for each $j\in J$, $t_{j-3}t_{j-2}t_{j-1}t_j$ is a path in $T$
 and the neighbours of $t_j$ in $T$ except for $t_{j-1}$ (if there are any) appear in the labelling directly after $t_j$, and the vertices $t_j$, $j\in J$ are all at distance at least 3 apart in $T$ (that this is possible follows easily as $\Delta(T)\leq \Delta$ and $\gamma\ll \beta$).   

Now, if $V(G)$ contains a set $U$ of $m$ vertices with degree at most $n/2m$, then $|N_G(U)|\leq m\cdot n/2m=n/2<n-\mu n-m$, and hence $G$ is not $(m,\mu n)$-joined, a contradiction. Therefore, setting $V_0$ to be the set of vertices in $V(G)$ with degree at least $n/2m$, we have $|V(G)\setminus V_0|<m$.

We now randomly embed $T_0$ into $G[V_0]$ vertex by vertex, as follows, creating an embedding $\phi$. Arbitrarily, choose $v_1\in V_0$ and set $\phi(t_1)=v_1$. Then, for each $2\leq i\leq |T_0|$ in turn, let $w_i$ be the image under $\phi$ of the sole neighbour of $t_i$ in $T[\{t_1,\ldots,t_i\}]$, choose $v_i\in N_G(w_i,V_0)\setminus \{v_1,\ldots,v_{i-1}\}$ uniformly at random and set $\phi(t_i)=v_i$.
Note that, for each $2\leq i\leq |T_0|$, we have that $w_i\in V_0$, and therefore
\[
|N_G(w_i,V_0)\setminus \{v_1,\ldots,v_{i-1}\}|\geq \frac{n}{2m}-m-|T_0|\geq \frac{n}{2m}-m-8\Delta\beta n\geq \frac{n}{4m},
\]
so that this embedding successfully embeds $T_0$. We will show the following claim.

\begin{claim}\label{claim:new} For each $U\subset V(G)$ with $|U|=m$ and $v\in V_0\setminus U$, with probability $1-o(n^{-m-1})$, there are at least $4 \mu n$ values of $j\in J$ such that $v_jv\in E(G)$ and $N_{\phi(T_0)}(v_j)\subset N_G(u)$ for some $u\in U$.
\end{claim}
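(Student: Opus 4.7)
My approach is to pick a single $u^*\in U$ well-adapted to $v$, define a target set $Y\subseteq N_G(v)\cap V_0$ of candidates for $v_j$, and then argue that, with the claimed probability, at least $4\mu n$ values of $j\in J$ realise $v_j\in Y$ and have every neighbour of $v_j$ in $\phi(T_0)$ lying in $N_G(u^*)$; any such $j$ satisfies the claim with $u=u^*$.

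\textbf{Choosing $u^*$ and $Y$.} Since $v\in V_0$, we have $|N_G(v)\cap V_0|\geq n/(4m)$. For each $y\in N_G(v)\cap V_0$ we have $|N_G(y)|\geq n/(2m)$, and the $(m,\mu n)$-joinedness of $G$ applied to $U$ and $N_G(y)$ (of size $\gg\mu n$ since $\mu\ll 1/m$) gives $\sum_{u\in U}|N_G(y)\cap N_G(u)|\geq |N_G(y)|-\mu n\geq n/(4m)$, so some $u\in U$ has $|N_G(y)\cap N_G(u)|\geq n/(4m^2)$. Pigeonholing over $y$ and $u$ yields $u^*\in U$ with $Y:=\{y\in N_G(v)\cap V_0:|N_G(y)\cap N_G(u^*)|\geq n/(4m^2)\}$ of size $|Y|\geq n/(4m^2)$, and an analogous pigeonhole over all of $V_0$ gives a ``useful set'' $G^*:=\{x\in V_0:|N_G(x)\cap N_G(u^*)|\geq n/(4m^2)\}$ of size $\geq n/(2m)$ (since $Y\subseteq G^*$, both bounds can be secured by choosing $u^*$ to simultaneously maximise both pigeonhole counts).

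\textbf{Disjoint windows and the good event.} Using that the $t_j$ for $j\in J$ are pairwise at distance at least $3$ in $T$, so their tree-neighbourhoods are pairwise disjoint, I restrict to $J'\subseteq J$ with $|J'|\geq \gamma n/(3\Delta)$ such that the label-windows $\{j-2,j-1,j,\ldots,j+\Delta\}$ for $j\in J'$ are pairwise disjoint and $j+\Delta\leq |T_0|$ (so all neighbours of $t_j$ in $T$ lie in $T_0$). Call $j\in J'$ \emph{good} if $v_j\in Y$ and $v_{j-1},v_{j+1},\ldots,v_{j+c_j}\in N_G(u^*)$, where $c_j=\deg_T(t_j)-1$; such $j$ witnesses the claim's property with $u=u^*$, since $Y\subseteq N_G(v)$ and $N_{\phi(T_0)}(v_j)=\{v_{j-1},v_{j+1},\ldots,v_{j+c_j}\}$. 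Conditioning on the \emph{skeleton} $\{v_i:i\notin\bigcup_{j\in J'}[j-1,j+c_j]\}$ determines each $v_{j-2}$, and then the window choices for distinct $j\in J'$ become mutually independent (each window is driven solely by $v_{j-2}$ and by the used-set from the skeleton).

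\textbf{Per-window probability.} For $j\in J'$ with $v_{j-2}\in G^*$ (declared \emph{useful}), a step-by-step estimate yields $\Pr[j\text{ good}\mid\text{skeleton}]\geq p$ for some constant $p=p(\mu,m,\Delta)>0$: each of $\Pr[v_{j-1}\in N_G(u^*)\mid v_{j-2}]$, $\Pr[v_j\in Y\mid v_{j-1}]$, and $\Pr[v_{j+i}\in N_G(u^*)\mid v_j]$ is $\Omega(1/m^2)$, because the respective target set has size $\Omega(n/m^2)$ while the choice set has size at most $n$; a product over the $\leq \Delta+1$ steps gives a positive constant. A minor refinement of $Y$ and $G^*$ may be needed to secure the intermediate bound $\Pr[v_j\in Y\mid v_{j-1}]=\Omega(1/m^2)$, arranged by requiring that every vertex selected at the intermediate step also lies in an auxiliary good set with many edges to $Y$.

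\textbf{Concentration and the main obstacle.} It remains to show that $Z:=|\{j\in J':v_{j-2}\in G^*\}|\geq |J'|/2$ except with probability $\exp(-\Omega(n))$: the Doob martingale for $Z$ has $O(1)$-bounded increments, since the disjointness of the label-windows and the bounded degree of $T$ imply that each random choice in the embedding affects only $O(1)$ windows, so Lemma~\ref{lem:azuma} suffices. Given $Z\geq |J'|/2$, Lemma~\ref{lemma:chernoff} applied to the (conditionally) independent good-events over useful $j\in J'$ yields at least $p|J'|/4\geq 4\mu n$ good $j$'s (using $\mu\ll\gamma/\Delta$ and $|J'|\geq \gamma n/(3\Delta)$) except with probability $\exp(-\Omega(n))=o(n^{-m-1})$. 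The main obstacle is establishing $\E[Z]=\Omega(n)$: because the random embedding need not distribute $v_{j-2}$ uniformly over $V_0$, one must exploit the near-completeness of $G$ (a $(m,\mu n)$-joined graph has all but $o(n^2)$ non-edges) to show a ``quasi-uniformity'' ensuring $\Pr[v_{j-2}\in G^*]=\Omega(1/m)$ on average.
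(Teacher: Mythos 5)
Your high-level structure matches the paper's: fix $u^*\in U$ and a large target set $Y$ of candidates for $v_j$, then show many $j\in J$ realise the configuration. But there is a genuine gap, which you yourself flag at the end: by placing $v_{j-2}$ in the conditioned skeleton and only randomising from $v_{j-1}$ onwards, you need $v_{j-2}\in G^*$, and you have no control over the marginal distribution of $v_{j-2}$ in the greedy random embedding. This is not a technicality to be patched by ``quasi-uniformity'' of $G$; the embedding could in principle concentrate $v_{j-2}$ outside $G^*$, and a direct estimate of $\E[Z]$ along these lines does not go through.

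The paper avoids this by extending the window one step further back. The set $J$ is chosen so that $t_{j-3}t_{j-2}t_{j-1}t_j$ is a path in $T$, and the proof conditions only on $v_1,\ldots,v_{j_i-3}$, leaving $v_{j_i-2}$ random. Since $v_{j_i-3}\in V_0$ and $G$ is $(m,\mu n)$-joined with $n/8m^2\geq\mu n$, all but at most $m$ vertices in $N_G(v_{j_i-3})$ have at least $n/8m^3$ neighbours in $N_G(u)\cap N_G(y)\cap V_0$; hence, conditioned on any valid history up to $v_{j_i-3}$, the next vertex $v_{j_i-2}$ is ``useful'' with probability essentially $1$. Thus the per-window lower bound $\mathbb{P}(X_i=1)\geq\alpha$ is obtained uniformly over all conditionings on the prefix, with no need to understand the distribution of $v_{j_i-2}$ at all. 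A secondary issue: the window events are not conditionally independent given your skeleton, because the choice set for each $v_i$ excludes all of $\{v_1,\ldots,v_{i-1}\}$, which includes vertices from earlier windows. The paper's uniform-conditional lower bound sidesteps this too, since it feeds directly into the Azuma-type submartingale bound (Lemma~\ref{lem:azuma}) rather than Chernoff for independent events.
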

\begin{proof} 
Fix $U\subset V(G)$ with $|U|=m$ and $v\in V_0\setminus U$.
Now, for each $y\in V_0\setminus U$, as $|N_G(y)|\geq n/2m$ and $G$ is $(m,\mu n)$-joined, we have
\[|N_G(U)\cap N_G(y)\cap V_0)|\geq \frac{n}{2m}-m-\mu n-m\geq \frac{n}{4m}.
\]
Thus, there is some $u\in U$ with $|N_G(u)\cap N_G(y)\cap V_0|\geq n/4m^2$. As this holds for every $y\in V_0\setminus U$ and $|N_G(v,V_0)\setminus U|\geq n/2m-2m\geq n/4m$, there is some $u\in U$ such that, for at least $n/4m^2$ vertices $y\in N(v,V_0)\setminus U$, we have $|N_G(u)\cap N_G(y)\cap V_0|\geq n/4m^2$. Say the set of these vertices $y$ is $Y_{v,u}$.

We now show that, with probability at least $1-o(n^{-(m+1)})$, for at least $4\mu n$ values of $j\in J$, we have $v_j\in Y_{v,u}$ and $N_{\phi(T_0)}(v_j)\subset N_G(u)$.
 Let $J=\{j_1,\ldots,j_{\gamma n}\}$, in order, and, for each $i\in [\gamma n]$,
 let $X_i$ be the indicator function for $v_{j_i}\in Y_{v,u}$ and $N_{\phi(T_0)}(v_{j_i})\subset N_G(u)$.

Consider $i\in [\gamma n]$ and suppose we have embedded $t_1,\ldots,t_{j_i-3}$. As $v_{j_i-3}\in V_0$, for each $y\in Y_{v,u}$, note that, as $|N_G(u)\cap N_G(y)\cap V_0|\geq n/4m^2$, all but at most $m$ vertices in $N_G(v_{j_i-3})$ have at least $n/8m^3$ neighbours in $|N_G(u)\cap N_G(y)\cap V_0|$, for otherwise, as before, we get that $G$ is not $(m,n/8m^2)$-joined, a contradiction as $n/8m^2\geq \mu n$.
Therefore, the probability that $v_{j_i-1}\in N_G(u)\cap N_G(y)\cap V_0$, $v_{j_i}=y$ and any subsequent neighbours of $t_{j_i}$ are embedded into $N_G(u)\cap N_G(y)\cap V_0$ is at least
\[
\frac{|N_G(v_{j_i-3},V_0)\setminus \{v_1,\ldots,v_{j_i-3}\}|-m}{|N_G(v_{j_i-3},V_0)\setminus \{v_1,\ldots,v_{j_i-3}\}|}\cdot \frac{n/8m^3}{n}\cdot \frac{1}{n}\cdot \left(\frac{n/4m^2-|T_0|}{n}\right)^\Delta\geq \frac{1}{n(8m)^{2\Delta+3}}.
\]
As these events are distinct for each $y\in Y_{v,u}$, we have that, conditioned on any possible values of $v_1,\ldots,v_{j_i-3}$,
\[
\mathbb{P}(X_i=1)\geq |Y_{v,u}|\cdot \frac{1}{n(8m)^{2\Delta+3}}\geq \frac{n}{4m^2}\cdot \frac{1}{n(8m)^{2\Delta+3}}\geq \frac{1}{(8m)^{2\Delta+5}}.
\]
Therefore, setting $\alpha=\frac{1}{(8m)^{2\Delta+5}}$ so that $\alpha\gg \mu$, and letting $Z_i=\sum_{i'=1}^i(X_{i'}-\alpha)$ for each $i\in [\gamma n]$, we have that $(Z_0,Z_1,\ldots,Z_{\gamma n})$ is a submartingale, using that, for all $1\leq i<i'\leq \gamma n$, $t_{j_i}$ and $t_{j_{i'}}$ have distance at least 3 apart in $T$, and $t_{j_i}$ and its neighbours appear in the sequence $t_1,\ldots,t_{|T_0|}$ before $t_{j_{i'}}$ or any of its neighbours.
Furthermore, for each $i\in [\gamma n]$, we have $|Z_i-Z_{i-1}|\leq 1$. Therefore, by Azuma's inequality (Lemma~\ref{lem:azuma}), we have 
\[
\mathbb{P}\Big(\sum_{i\in [\gamma n]}(X_i-\alpha )\leq \alpha \gamma n/2\Big)\leq 2\exp\Big(-\frac{(\alpha \gamma n/2)^2}{2\gamma n}\Big)=2\exp(-\alpha^2\gamma n/8),
\]
so that $\sum_{i\in [\gamma n]}X_i\geq \alpha \gamma n/2\geq 4\mu n$ with probability at least $1-2\exp(-\alpha^2\gamma n/8)=1-o(n^{-(m+1)})$, as required, using $\alpha,\gamma\gg \mu\gg 1/n$.
\end{proof}
Therefore, by Claim~\ref{claim:new} and a union bound, we can assume that $\phi$ has the property that, for each $U\subset V(G)$ with $|U|=m$ and $v\in V_0\setminus U$, there are at least $4\mu n$ values of $i\in J$ such that $v_iv\in E(G)$ and $N_{\phi(T_0)}(v_i)\subset N_G(u)$ for some $u\in U$. Next, we greedily extend the embedding of $T_0$ to one of $T$, where if we cannot simply extend the embedding by embedding $t_i$ we use the property from Claim~\ref{claim:new} to swap a single vertex in the embedding to allow this extension. That is, we do the following.

Let $\phi_{|T_0|}=\phi$. Then, for each $i$ with $|T_0|<i\leq n-m+1$ in turn, we will take $\phi_{i-1}$, an embedding of $T[\{t_1,\ldots,t_{i-1}\}]$ into $G[\{v_1,\ldots,v_{i-1}\}]$, and find $v_i\in V(G)\setminus \{v_1,\ldots,v_{i-1}\}$ and an embedding $\phi_i$ of $T[\{t_1,\ldots,t_i\}]$ into $G[\{v_1,\ldots,v_i\}]$, as follows. Let $w_i$ be the image under $\phi_{i-1}$ of the sole neighbour of $t_i$ in $T[\{t_1,\ldots,t_i\}]$ and,
\begin{enumerate}[label = \roman{enumi})]
\item if possible, choose $v_i\in N_G(w_i,V_0\setminus \{v_1,\ldots,v_{i-1}\})$ and set $\phi_i(t_i)=v_i$ and $\phi_i(t_j)=\phi_{i-1}(t_j)$ for each $j<i$,
\item otherwise, if possible, pick $v_i\in V(G)\setminus \{v_1,\ldots,v_{i-1}\}$ and $j\in J$ such that $\phi_{i-1}(t_j)=v_j$, $w_iv_j\in E(G)$ and $\phi_{i-1}(N_{T_0}(t_j))\subset N_G(v_i)$, and set $\phi_{i}(t_i)=\phi_{i-1}(t_j)$, $\phi_i(t_j)=v_i$ and $\phi_{i-1}(t_{j'})=\phi_{i-1}(t_{j'})$ for each $j'\in [i-1]\setminus \{j\}$ (noting that this does embed $T[\{t_1,\ldots,t_i\}]$ into $G[\{v_1,\ldots,v_i\}]$ as $N_{T_0}(t_j)=N_T(t_j)$), and,
\item otherwise, stop the embedding.
\end{enumerate}
Suppose the process first fails to embed $t_\ell$ for some $|T_0|<\ell\leq |T|$, otherwise we have embedded all of $T$ in $G$ and are done. We first show that step ii) has been carried out at most $3\mu n$ times. Indeed, suppose otherwise, and let $I$ be the set of the \emph{first} $\mu n$ values of $i$ with $|T_0|<i<\ell$ for which $v_i$ was embedded using step ii), so that $i<\ell-2\mu n$ for each $i\in I$.
Now, for each $i$ with $|T_0|<i<\ell$, $\phi_{i-1}$ and $\phi_{i}$ differ on at most one vertex in $\{t_1,\ldots,t_{i-1}\}$ which (if it exists) must be some vertex $t_j$ with $j\in J$ for which we still have $\phi_{i-1}(t_j)=v_j$. Therefore, as $w_i\in \{v_1\}\cup \phi_{i-1}(V(T)\setminus V(T_0))$, once a vertex has been embedded to $w_i$ by some $\phi_{i'}$, it is embedded to $w_i$ by any subsequent embedding. Thus, as $\Delta(T)\leq \Delta$, there must be at most $\Delta$ values of $i'>|T_0|$ with $w_{i'}=w_i$. Therefore, letting $W=\{w_i:i\in I\}$, we have
$|W|\geq |I|/\Delta$. Let $V_1=V_0\setminus \{v_1,\ldots,v_{\ell-2\mu n}\}$, and note that there are no edges in $G$ between $W$ and $V_1$, for otherwise there is an edge from $w_i$ to $V_1$ for some $i\in I$, which contradicts that $t_i$ was not embedded by a step i). Thus, as $|V_1|\geq 2\mu n-m\geq \mu n$ and
$|W|\geq \mu n/\Delta\geq m$ (using $m\ll1/\mu\ll n$), this contradicts that $G$ is $(m,\mu n)$-joined. Thus, we have that step ii) has been carried out at most $3\mu n$ times.

Let then $J_0=\{j\in J:\phi_{\ell-1}(v_j)=\phi(v_j)\}$, so that $|J\setminus J_0|\leq 3\mu n$. Now, we have $w_\ell\in \{v_1\}\cup \phi_{\ell-1}(V(T)\setminus V(T_0))$, and from the process above, we have that $\phi_{\ell-1}(t_1)=v_1$ and, for each $|T_0|<i<\ell$, $t_i$ is either embedded into $V_0$ at step i) or embedded to some vertex $\phi_{i-1}(t_j)=\phi(t_j)$, which is also in $V_0$. Thus, $w_{\ell}\in V_0$. Let $U_\ell\subset V(G)\setminus \{v_1,\ldots,v_{\ell-1}\}$ have size $m$, possible as $| V(G)\setminus \{v_1,\ldots,v_{\ell-1}\}|\geq n-(|T|-1)\geq m$. Then, noting $w_{\ell}\in V_0\setminus U_\ell$ and using the property in Claim~\ref{claim:new}, there is some $j\in J_0$ and $v_\ell\in U_\ell$ with $w_\ell v_j\in E(G)$ and $\phi(N_{T_0}(t_j))\subset N_G(v_\ell)$. As the vertices in $J$ have distance at least 3 apart in the tree, for each $j'$ with $t_{j'}\in N_{T_0}(t_j)$, we have $j'\notin J$, and therefore $\phi_{\ell-1}(N_{T_0}(t_j))=\phi(N_{T_0}(t_j))\subset N_G(v_\ell)$. Note that $v_\ell$ and $j$ show that ii) can be carried out to embed $t_\ell$, contradiction. Therefore, the process runs until all the vertices of $T$ have been embedding, showing that $G$ contains a copy of $T$.
\end{proof}

\section{Embedding trees to cover vertex subsets}\label{sec:embedtocover}
In this section, we prove the following key embedding result, which we use in Section~\ref{section:k=2} to embed each piece of the tree into the vortex while covering a prescribed set of vertices, represented in Lemma~\ref{lemma:iterativeembed} by the set $X$.

\begin{restatable}{lemma}{iterativeembed}\label{lemma:iterativeembed} Let $\Delta\ge 2$, $d\geq20$, $m\in\mathbb{N}$ and $0<\gamma<1/10$ satisfy $m\geq d^8$, $d\geq\Delta$ and $d\gg\Delta^3/\gamma$. 
Let $G$ be an $m$-joined graph containing a vertex $v$. Let $T$ be a tree satisfying $|T|\geq2d^2m$ and $\Delta(T)\leq \Delta$, and let $t\in V(T)$.
Suppose $X\subset V(G)\setminus \{v\}$ contains at most $(1-\gamma)|T|$ vertices, and $I(X\cup \{v\})$ is $(d,m)$-extendable in $G$. Let $t'$ be a leaf of $T$ which is not $t$, and suppose $|G|\geq|T|+20dm$.

Then, $G$ contains a copy of $T$ that covers $X$, in which $t$ is copied to $v$, and $t'$ is copied into $V(G)\setminus X$.
\end{restatable}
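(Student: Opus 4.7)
The plan is to embed $T$ in three stages: first decompose $T$ into a skeleton $T^*$ and a collection of flex structures, then embed $T^*$ using extendability, and finally use the flex structures to cover $X$.

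For the decomposition, I would apply Lemma~\ref{lemma:leaves}. Since $T$ has bounded degree, it contains either many leaves or many vertex-disjoint bare paths of a chosen constant length $L=\Theta(\Delta/\gamma)$. I then select a collection of \emph{flex structures}---leaves with distinct parents, or bare paths, or a mixture of both---whose combined interior vertex count is at least $|X|$, and let $T^*$ denote the complementary skeleton, chosen to satisfy $|T^*|\leq \gamma|T|$. A useful feature of bare paths of length $L$ is that each can absorb up to $L-1$ specified vertices of $X$ via re-routing, which is needed since $|X|$ can be as large as $(1-\gamma)|T|$ and hence leaves alone will typically not carry enough capacity.

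For embedding $T^*$, I invoke Corollary~\ref{corollary:extendable:embedding} with $R=I(X\cup\{v\})$, which is $(d,m)$-extendable in $G$ by hypothesis. The size condition $|R|+|T^*|\leq|G|-(2d+3)m$ follows from $|G|\geq|T|+20dm$, $|X|\leq(1-\gamma)|T|$, $|T^*|\leq \gamma|T|$, and $d\geq 20$. The corollary then yields a copy $S$ of $T^*$ in $G-X$ with $t\mapsto v$, such that $S\cup R$ remains $(d,m)$-extendable in $G$. In particular, the endpoints of the flex structures are now pinned at vertices in $\{v\}\cup(V(G)\setminus X)$, and the empty set $X$ has been preserved entirely for the covering stage.

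For the covering stage, I fill in the flex structures so as to cover $X$ precisely. With reserved leaves, the bipartite graph between leaf-parent images and $X$ satisfies Hall's matching criterion by the $m$-joinedness of $G$ (large subsets of parent-images see almost all of $X$, while small ones expand strongly by the $(d,m)$-extendability of $S\cup R$), yielding a matching that covers $X$. With reserved bare paths, I iteratively re-route each path of length $L$ between its pinned endpoints through a prescribed block of up to $L-1$ still-uncovered vertices of $X$; the $m$-joined property supplies the intermediate connecting edges at each step. Once $X$ is covered, a single further application of Lemma~\ref{lemma:adding:leaf} places the designated leaf $t'$ in $V(G)\setminus X$, using the $(d,m)$-extendability that is maintained throughout.

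The main obstacle is the covering stage in the bare-paths regime: successive re-routings through specified vertices of $X$ must be executed without conflict, and the residual subgraph (current embedded tree together with still-unused vertices) must retain enough $m$-joinedness and $(d,m)$-extendability at each step to allow the next routing. The iteration has up to $|X|/(L-1)=O(\gamma|T|/\Delta)$ steps, and each routing consumes only $O(L)$ of the local margin provided by the extendability condition, so the hypotheses $d\gg\Delta^3/\gamma$ and $m\geq d^8$ are calibrated precisely to absorb this accumulated loss and keep the procedure running until all of $X$ is covered.
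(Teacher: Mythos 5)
Your overall skeleton-plus-absorbers plan gestures at the right flavour of distributive absorption, but the decisive step --- re-routing each reserved bare path ``through a prescribed block of up to $L-1$ still-uncovered vertices of $X$'' --- does not follow from $m$-joinedness, and I do not see how to repair it at this level of detail. The path length $L=\Theta(\Delta/\gamma)$ is a constant, whereas $m\geq d^8\gg L$, so the $m$-joined hypothesis tells you nothing about the adjacency structure inside a set of size $L-1$: a prescribed block $B\subset X$ of size $L-1$ could perfectly well be an independent set of $G$, and then no $x,y$-path of length $L$ can pass through all of $B$. Even routing through only a few vertices of $B$ requires specific pairs of them to have common neighbours, which $m$-joinedness cannot guarantee for sets of sublinear-in-$m$ size. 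The same constraint rules out the fallback of covering $X$ entirely by matching leaves: a tree with $|X|$ leaves having distinct parents would need at least $|X|$ leaves, but also $\Delta(T)\leq\Delta$ limits parents to $\Delta$ leaves each, and $|X|$ can be as large as $(1-\gamma)|T|$, so a bounded-degree tree with $|T^*|\leq\gamma|T|$ simply cannot furnish $|X|$ distinct-parent leaves once $\Delta\geq 2$.

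The paper takes a genuinely different route. It removes the designated leaf $t'$, applies Corollary~\ref{corollary:ngammadecomposition} to split $T-t'$ into a geometrically descending sequence $T_1,\dots,T_\ell$ of edge-disjoint subtrees, with $\ell$ chosen so that $2m/(d-1)^{\ell-1}<1$ (this is what $m\geq d^8$ is really for). The large first piece $T_1$, of size $\geq(1-\gamma/2)|T|\geq|X|+\Delta m+3$, is embedded by Lemma~\ref{lemma:covering:0}: a greedy left-to-right embedding that prefers to place the next vertex into $X$ when possible and otherwise takes an ``extendable'' step outside $X$; $m$-joinedness bounds the number of non-$X$ steps at $\Delta m$, so after $T_1$ is embedded at most $m-1$ vertices of $X$ remain uncovered. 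Each subsequent piece $T_j$, being much smaller, is handled via Proposition~\ref{prop:tree:separate} (which produces a large $(4j)$-separated vertex set in $T_j$) together with Lemma~\ref{lemma:covering:1} (a covering result from~\cite{montgomery2019}), and these shrink the uncovered part of $X$ by a multiplicative factor of $(d-1)$ per round. After $\ell$ rounds the uncovered count drops below $1$, the leaf $t'$ is reattached via Lemma~\ref{lemma:adding:leaf}, and the hypotheses $d\gg\Delta^3/\gamma$ are exactly what make the separated-set counts large enough at every level of the descent. If you want your absorber-style plan to go through, you would need a replacement for the re-routing step that has the strength of Lemma~\ref{lemma:covering:1}, which is considerably more delicate than the ``supply intermediate edges by $m$-joinedness'' claim.
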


In order to prove this, we first show the following result, which embeds a tree to cover most of a prescribed set of vertices.

\begin{lemma}\label{lemma:covering:0}
Let $d,m,\Delta\in\mathbb N$ satisfy $d\ge 3$ and $d\geq \Delta$. Let $G$ be an $m$-joined graph which contains a set $X\subset V(G)$ and a vertex $v\in V(G)\setminus X$ such that $I(X\cup \{v\})$ is $(d,m)$-extendable in $G$.
Let $T$ be a tree with $\Delta(T)\leq \Delta$ and let $t\in V(T)$. Suppose that
\begin{equation}
|T|\geq |X|+\Delta m+2\;\;\text{ and }\;\;|G|\geq |T|+(2d+4)m+1.\label{eqn:TGsize}
\end{equation}
Then, there is a copy $S$ of $T$ such that $t$ is copied to $v$, $S\cup I(X)$ is $(d,m)$-extendable in $G$,  and $|X\setminus V(S)|<m$.
\end{lemma}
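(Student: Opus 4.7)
The plan is to embed $T$ into $G$ iteratively, building a copy $S$ of $T$ one vertex at a time while maintaining the invariant that $S_i \cup I(X)$ is $(d,m)$-extendable in $G$. I start with $\phi(t) = v$, giving $S_1 \cup I(X) = I(X \cup \{v\})$, which is extendable by hypothesis. At each subsequent step I would choose the next vertex to embed \emph{adaptively}: let $V^*$ be the set of currently unembedded vertices of $T$ that are adjacent in $T$ to some embedded vertex, and for $t \in V^*$ write $s_t$ for its unique already-embedded neighbour.

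The embedding rule has two cases. In \emph{Case~(a)} I look for $t \in V^*$ whose parent-image $\phi(s_t)$ has a neighbour $y \in X \setminus V(S_{i-1})$, and set $\phi(t) = y$; since both endpoints lie in $V(S_{i-1} \cup I(X))$ with degrees at most $d - 1$ (using $d \geq \Delta$), Lemma~\ref{lemma:adding:edge} applied to the edge $\phi(s_t)y$ of $G$ preserves the extendability invariant. If no such $(t, y)$ exists, \emph{Case~(b)} picks any $t \in V^*$ and applies Lemma~\ref{lemma:adding:leaf} to $S_{i-1} \cup I(X)$ to obtain $y \in N_G(\phi(s_t)) \setminus (V(S_{i-1}) \cup X)$, setting $\phi(t) = y$. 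Tracking $|V(S_{i-1} \cup I(X))| = (i - 1) + |X| - \kappa_{i-1}$, where $\kappa_{i-1}$ counts Case~(a) applications so far, the hypotheses $|G| \geq |T| + (2d+4)m + 1$ and $|T| \geq |X| + \Delta m + 2$ ensure that the size condition of Lemma~\ref{lemma:adding:leaf} is satisfied as long as Case~(b) is used at most $|T| - |X| + m - 1$ times. Since only Case~(a) reduces $|X \setminus V(S_i)|$, the required bound $|X \setminus V(S_n)| < m$ will follow once I know that Case~(a) is applicable whenever $|X \setminus V(S_{i-1})| \geq m$.

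Showing this last point is the main obstacle. Let $A = \{\phi(s_t) : t \in V^*\} \subseteq V(S_{i-1})$ be the set of candidate parent-images. When $|A| \geq m$, disjointness of $A$ and $X \setminus V(S_{i-1})$ combined with $m$-joinedness of $G$ directly produces the required edge, giving a valid Case~(a) choice. The subtle case is $|A| < m$, where the unembedded part of $T$ hangs off a bottleneck of fewer than $m$ embedded vertices; here I would use the stronger extendability of $S_{i-1} \cup I(X)$ applied to $A$ together with a suitable subset of $X \setminus V(S_{i-1})$ of size $m - |A|$, and exploit the structural slack $|T| \geq |X| + \Delta m + 2$, either to produce the $X$-edge directly or to re-route an earlier step of the embedding so that $|A|$ enlarges at subsequent iterations. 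Granting this, Case~(b) is invoked only after $|X \setminus V(S_{i-1})|$ has already dropped below $m$, the size hypothesis of Lemma~\ref{lemma:adding:leaf} remains valid throughout, and the lemma follows.
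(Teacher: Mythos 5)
Your proposal follows the same iterative framework as the paper's proof (embed $T$ leaf-by-leaf while maintaining that $S_i\cup I(X)$ is $(d,m)$-extendable, distinguishing an $X$-absorbing step from a fallback step via Lemma~\ref{lemma:adding:edge} and Lemma~\ref{lemma:adding:leaf}), but it stalls precisely at the point you flag as ``the main obstacle,'' and the idea you gesture at there does not work. You need, in the case $|A|<m$, that some parent-image $\phi(s_t)$ already has a neighbour inside $X\setminus V(S_{i-1})$. Neither of your suggested tools can deliver this. The $(d,m)$-extendability of $S_{i-1}\cup I(X)$ bounds, from below, the number of neighbours of a small set that lie \emph{outside} $V(S_{i-1})\cup X$; it says nothing at all about neighbours \emph{inside} $X$, which is what Case~(a) needs. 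And $m$-joinedness gives an edge only between two disjoint sets each of size exactly $m$; padding $A$ with part of $X\setminus V(S_{i-1})$ to reach size $m$ produces an edge that might go from the padding into $X$, which is useless. In fact the claim is simply false: if $T$ is traversing a long bare path, the frontier $A$ is a single vertex, and there is no reason that one vertex should see $X\setminus V(S_{i-1})$ at all. The fallback of ``re-routing an earlier step'' would abandon the clean one-pass structure and is not made precise; in effect it is a different (and unproved) algorithm.

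The paper's proof handles this differently, and the difference is the key idea you are missing. It \emph{does not} try to show that a good ($X$-absorbing) step is always available when $|X\setminus V(S_{j-1})|\geq m$. Instead it fixes an order of the vertices of $T$ up front and simply allows neutral (fallback) steps to happen, then \emph{bounds their number} (Claim~\ref{clm:bad}): whenever a neutral step is taken at $u_i$, the vertex $u_i$ has no neighbour in $X\setminus V(S_{i-1})\supseteq X\setminus V(S_{j-1})$, so the collection of distinct such parents is a set with no edges to $X\setminus V(S_{j-1})$. If $|X\setminus V(S_{j-1})|\geq m$, $m$-joinedness forces this parent set to have size $<m$; since each parent has at most $\Delta$ children, at most $\Delta m$ neutral steps have occurred. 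That bound on the neutral-step count is exactly what makes both halves of the argument close: it controls $|S_{j-1}\cup I(X)|$ so that Lemma~\ref{lemma:adding:leaf} stays applicable, and at the end (using $|T|\geq|X|+\Delta m+2$, which forces at least $\Delta m$ neutral steps in total) it yields $|X\setminus V(S_\ell)|<m$. Your size bookkeeping, as written, is circular without such a bound, because the hypothesis that Case~(b) is used at most $|T|-|X|+m-1$ times is precisely what needs to be established. So the proposal has the right scaffolding but is missing the counting argument that is the crux of the proof.
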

\begin{proof}
Let $\ell=|T|$. Let $t_1=t$, and label the vertices of $V(T)\setminus \{t\}$ as $t_2,\ldots,t_\ell$ so that, for each $i\in [\ell]$, $T_i=T[\{t_1,\ldots,t_i\}]$ is a tree. For each $2\leq i\leq \ell$, let $s_i$ be the unique neighbour of $t_i$ in $T[\{t_1,\ldots,t_i\}]$.
Let $v_1=v$ and let $S_1$ be the graph containing just the vertex $v_1$, so that $S_1$ is a copy of $T_1$ with $t_1$ copied to $v_1$, and $S_1\cup I(X)$ is $(d,m)$-extendable in $G$.  Now, carry out the following process, where for each $j=2,\ldots,\ell$ in turn, if possible we perform the following Step $\mathbf C_j$ to embed $t_j$ and produce a copy $S_j$ of $T_j$ in which $t$ is copied to $v$ and $S_j\cup I(X)$ is $(d,m)$-extendable in $G$.

\stepcounter{propcounter}
\begin{itemize}
    \item[$\mathbf{\Alph{propcounter}}_j$ $\bullet$]\label{covering:goodstep} Let $u_j$ be the copy of $s_j$ in $S_{j-1}$. If there exists a vertex $v_j\in X\setminus V(S_{j-1})$ so that $u_jv_j\in E(G)$, then let $S_j=S_{j-1}+u_jv_j$ and note that $S_j$ is a copy of $T_j$ in $G$. Moreover, $S_j\cup I(X)$ is $(d,m)$-extendable in $G$ by Lemma \ref{lemma:adding:edge}. In this case, we say Step $\mathbf C_j$ is a \textit{good} step.

    \item[\textcolor{white}{$\mathbf{\Alph{propcounter}}_j$} $\bullet$]\label{covering:badstep} Otherwise, if possible let $v_j$ be a neighbour of $u_{j}$ in $V(G)\setminus(V(S_{j-1})\cup X)$ such that, setting $S_j=S_{j-1}+u_jv_j$, $S_j\cup I(X)$ is $(d,m)$-extendable in $G$. Note that $S_j$ is a copy of $T_j$ in $G$. In this case, we say Step $\mathbf C_j$ is a \textit{neutral} step.
\end{itemize}
We will show that we can successfully perform steps $\mathbf{C}_j$ for each $2\leq j\leq \ell$, and that the resulting tree $S_\ell$ will satisfy $|X\setminus V(S_\ell)|<m$. Then, as $S_\ell$ is a copy of $T_\ell=T$ with $t_1=t$ copied to $v_1=v$, this completes the proof.

\begin{claim}\label{clm:bad} For each $2\leq j\leq \ell$, if we have reached the start of step $\mathbf{C}_j$ and $|X\setminus V(S_{j-1})|\geq m$, then fewer than $\Delta m$ neutral steps have been taken so far.
\end{claim}
\begin{proof}[Proof of Claim~\ref{clm:bad}] Suppose we have reached the start of step $\mathbf{C}_j$ and $|X\setminus V(S_{j-1})|\geq m$. Let $J\subset [j-1]$ be the set of indices $i$ for which the step $\mathbf{C}_i$ was a neutral step, noting that, for each $i\in J$, $u_i$ has no neighbour in $G$ in $X\setminus V(S_{i-1})$, and hence no neighbour in $X\setminus V(S_{j-1})$. Thus, as $G$ is $m$-joined, we have $|\{u_i:i\in J\}|<m$.
As $\Delta(T)\leq \Delta$, $|\{u_i:i\in J\}|\geq |J|/\Delta$, so $|J|< \Delta m$, and thus before the start of step $\mathbf{C}_j$ we took fewer than $\Delta m$ neutral steps.
\renewcommand{\qedsymbol}{$\boxdot$}
\end{proof}
\renewcommand{\qedsymbol}{$\square$}

Suppose then, for contradiction, that the process reaches the start of Step $\mathbf{C}_j$ for some $2\leq j\leq \ell$ and fails to take either a good or neutral step.
Note that the number of good steps that have been taken is $|V(S_{j-1})\cap X|$ and the number of neutral steps that have been taken is $|V(S_{j-1})\setminus X|-1$, which is at most $|S_{j-1}|-|X|+m-2$ if $|X\setminus V(S_{j-1})|<m$.
 Therefore, by Claim~\ref{clm:bad}, the number of neutral steps is at most $\max\{\Delta m,|S_{j-1}|-|X|+m-2\}$, so that
 \begin{align*}
 |S_{j-1}\cup I(X)|&=(|X|+1)+(|V(S_{j-1})\setminus X|-1)\leq |X|+1+\max\{\Delta m,|S_{j-1}|-|X|+m-2\}\\
 &\leq \max\{|X|+\Delta m+1,|S_{j-1}|+m\}\overset{\eqref{eqn:TGsize}}{\leq} |T|+m.
 \end{align*}
Therefore, by \eqref{eqn:TGsize}, $|G|\geq |T|+(2d+4)m+1\geq |S_{j-1}\cup I(X)|+(2d+2)m+m+1$, so that, by Lemma~\ref{lemma:adding:leaf}, there is some $v_j\in V(G)\setminus(V(S_{j-1})\cup X)$ which is a neighbour of $u_j$ in $G$ and such that $S_{j-1}\cup I(X)+u_jv_j$ is $(d,m)$-extendable in $G$, contradicting that we did not take a good or neutral step at Step $\mathbf{C}_j$.

Therefore, the process has successfully taken Step $\mathbf{C}_j$ for each $2\leq j\leq \ell$, and produced $S_\ell$, a copy of $T_\ell$ in $G$ with $t_1=t$ copied to $v_1=v$. Finally, note that at the start of Step $\mathbf{C}_\ell$ we will have taken at most $|X|$ good steps, and thus at least $(\ell-2)-|X|= |T|-|X|-2\geq \Delta m$ neutral steps by \eqref{eqn:TGsize}. Therefore, by Claim~\ref{clm:bad}, $|X\setminus V(S_\ell)|\leq |X\setminus V(S_{\ell-1})|<m$, and thus $S_\ell$ is the required copy of $T$.
\end{proof}


Given a graph $G$ and a subset $Q\subset V(G)$, we say that $Q$ is $k$-separated in $G$ if each pair of vertices in $Q$ is at distance at least $k$ in $G$.
We now need two results from~\cite{montgomery2019}, which we state in a slightly simpler form that follow directly from \cite[Corollary 3.16]{montgomery2019} and \cite[Lemma 4.1]{montgomery2019}, respectively.
\begin{proposition}\label{prop:tree:separate}Let $k\ge 0$ and $\Delta\ge 2$. Let $T$ be a tree with $\Delta(T)\le \Delta$ and $|T|\ge 3\Delta^k$. Then, there is a subset $Q\subset V(T)$ which is $(2k+2)$-separated in $T$ such that $|Q|\ge |T|/(8k+8)\Delta^k$.
\end{proposition}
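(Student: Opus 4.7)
The plan is to build an explicit proper vertex colouring of the power graph $T^{2k+1}$ (whose vertex set is $V(T)$ and whose edges join pairs of vertices at distance at most $2k+1$ in $T$) using only $(2k+2)\cdot \Delta^k$ colours. The largest colour class is then a $(2k+2)$-separated subset of $V(T)$ by construction, and by pigeonhole it has size at least $|T|/((2k+2)\Delta^k)$, which is at least $|T|/((8k+8)\Delta^k)$, giving the desired $Q$.

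To construct the colouring, I would root $T$ at an arbitrary vertex and write $d(v)$ for the depth of $v$. For each $v$, let $a(v)$ be its unique ancestor at depth $\max\{d(v)-k,\,0\}$, so $a(v)$ equals the root whenever $d(v)\leq k$. Call two vertices at the same depth \emph{equivalent} if their $a(\cdot)$ values coincide; this is an equivalence relation because the ancestors of any vertex in a tree form a chain. The members of an equivalence class at depth $d$ are precisely the depth-$d$ descendants of their common ancestor, so the degree bound $\Delta(T)\le\Delta$ caps the size of such a class by $\Delta^k$ (a vertex has at most $\Delta$ children, and at most $\Delta-1$ at any non-root level, which telescopes to at most $\Delta^k$ over $k$ levels). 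Assign to the members of each equivalence class distinct in-class labels drawn from $[\Delta^k]$, and finally colour $v$ with the pair $(\ell(v),\,d(v)\bmod(2k+2))$, where $\ell(v)$ is its in-class label.

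To verify that vertices sharing a colour lie at distance at least $2k+2$ in $T$, suppose $u\neq v$ share a colour. If $d(u)\neq d(v)$, then $d(u)\equiv d(v)\pmod{2k+2}$ forces $|d(u)-d(v)|\ge 2k+2$, and since tree distance is always at least the depth difference, $d_T(u,v)\ge 2k+2$. If instead $d(u)=d(v)=d$, then $u$ and $v$ must lie in distinct equivalence classes at depth $d$ (otherwise being in the same class would force their in-class labels to differ). Hence $a(u)\neq a(v)$, so the lowest common ancestor of $u$ and $v$ has depth at most $d-k-1$, and
\[
d_T(u,v)=2\bigl(d-d(\mathrm{LCA}(u,v))\bigr)\ge 2(k+1)=2k+2,
\]
as needed.

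The only step that could cause trouble is the in-class size bound, but it reduces to a routine counting argument from the degree bound (with a harmless edge case when the common ancestor is the root, which in fact only keeps the bound $\le \Delta^k$). Beyond this, the argument is a direct construction followed by pigeonhole, so I do not anticipate any serious obstacle; the approach avoids the need for a case split on the leaf count or an induction on $k$.
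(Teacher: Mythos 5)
Your proof is correct and is a genuinely different argument from the paper's: the paper does not prove Proposition~\ref{prop:tree:separate} at all but derives it as a simplified form of {[}Corollary 3.16{]} in the cited reference, whereas you give a self-contained construction. Your approach—rooting $T$, defining $a(v)$ as the depth-$\max\{d(v)-k,0\}$ ancestor, labelling within classes by an injection into $[\Delta^k]$, and then colouring by the pair $(\ell(v),\, d(v)\bmod(2k+2))$—is a clean explicit proper colouring of $T^{2k+1}$ with $(2k+2)\Delta^k$ colours, and the pigeonhole step finishes it. The two cases in the separation check are both sound: differing depths with equal residue forces $|d(u)-d(v)|\ge 2k+2$, and equal depths with equal labels forces distinct classes, hence $a(u)\ne a(v)$, hence LCA depth $\le d-k-1$ and $d_T(u,v)\ge 2k+2$. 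The in-class size bound is a routine $\Delta\cdot(\Delta-1)^{k-1}\le\Delta^k$ count (and trivially holds when the common ancestor is the root at depth $<k$). One small bonus: your argument actually gives the stronger constant $(2k+2)\Delta^k$ in the denominator and does not use the hypothesis $|T|\ge 3\Delta^k$, whereas the paper inherits whatever constants and hypotheses the cited result requires. The trade-off is that the paper avoids reproving anything, while your version keeps the paper self-contained and makes the dependence on $k$ and $\Delta$ transparent.
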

\begin{lemma}\label{lemma:covering:1}
Let $k,d,m\in\mathbb N$ with $d\ge 20$. Let $G$ be an $m$-joined graph and let $R\subset G$ be a subgraph with $\Delta(R)\le d/4$. Suppose $X\subset V(G)\setminus V(R)$ is such that $R\cup I(X)$ is $(d,m)$-extendable in $G$.
Let $T$ be a tree with $\Delta(T)\le d/4$ which has a set of $3|X|$ vertices which is $(4k+4)$-separated in $T$.
Let $t\in V(T)$ and $r\in V(R)$, and suppose $|R|+|X|+|T|\le |G|-10dm-2k$. 

Then, there is a copy $S$ of $T$ in $G-V(R\setminus\{r\})$ so that $t$ is copied to $r$, $R\cup I(X)\cup S$ is $(d,m)$-extendable in $G$ and $|X\setminus V(S)|\le 2m/(d-1)^k$.
\end{lemma}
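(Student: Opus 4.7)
The proof will extend the iterative leaf-embedding strategy of Lemma~\ref{lemma:covering:0}, now using the separated set $Q$ to amplify reach into $X$ by a factor of $(d-1)^k$ via length-$k$ paths in $G$. First, root $T$ at $t$ and, using the $(4k+4)$-separation together with $|Q|=3|X|$, for each $q\in Q$ select a descending path $P_q=(q,q^{(1)},\ldots,q^{(k)})$ of length $k$ in $T$, chosen so that the $P_q$ are pairwise vertex-disjoint and contain no vertex of $Q\setminus\{q\}$. (A few $q\in Q$ may not admit such a descending path because they lie too close to a leaf of $T$, but after discarding these one still retains at least $2|X|$ usable $q$'s.) Order $V(T)$ as $t_1=t,t_2,\ldots,t_\ell$ so each prefix induces a subtree of $T$ and, for each usable $q\in Q$, the vertices $q,q^{(1)},\ldots,q^{(k)}$ appear in this order as a contiguous block.

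Build the copy $S$ of $T$ iteratively, starting with $t_1\mapsto r$ and adding one leaf at each step while maintaining that $R\cup I(X)\cup S_j$ is $(d,m)$-extendable in $G$. If $t_j\notin Q\cup\bigcup_{q\in Q}V(P_q)$, use Lemma~\ref{lemma:covering:0}'s rule: try a \emph{good step} (embed $t_j$ into $X\setminus V(S_{j-1})$ along an edge from its parent's image) and otherwise a \emph{neutral step} via Lemma~\ref{lemma:adding:leaf}. If $t_j=q\in Q$, instead attempt a \emph{deep step}: look for a length-$(k+1)$ path $uv_0v_1\cdots v_k$ in $G$, where $u$ is the image of $q$'s parent, the vertices $v_0,\ldots,v_{k-1}$ are fresh, $v_k\in X\setminus V(S_{j-1})$, and the embedding of $q,q^{(1)},\ldots,q^{(k)}$ to $v_0,v_1,\ldots,v_k$ preserves $(d,m)$-extendability. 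If such a path exists, embed the whole block at once; otherwise embed $q$ as a standard neutral step and process $q^{(1)},\ldots,q^{(k)}$ as subsequent good or neutral steps.

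For the bound, let $X'=X\setminus V(S_\ell)$ and suppose for contradiction $|X'|>2m/(d-1)^k$. Iterating the $(d,m)$-extendability inequality from $X'\subseteq X$ — where each $u\in X\setminus V(S)$ contributes a $+1$ to the right-hand side of~\eqref{def:extendability} because $d_{R\cup I(X)\cup S}(u)=0$ — gives $|N_G^j(X')|\geq (d-1)^j|X'|$ while intermediate sizes stay at most $2m$. Since $(d-1)^k|X'|>2m$, some $|N_G^j(X')|$ reaches $m$ at or before step $k-1$, whereupon $m$-joinedness forces $|V(G)\setminus N_G^{j+1}(X')|<m$ and hence $|V(G)\setminus N_G^k(X')|<m$. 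Since $X'_{j_q}\supseteq X'$ at each intermediate time $j_q$, any usable $q\in Q$ with parent-image in $N_G^k(X')$ admitted a valid deep step, so fewer than $m$ of the $\geq 2|X|$ deep attempts failed; combined with the bookkeeping that each successful deep step covers a distinct vertex of $X$, this forces $|X'|\leq 2m/(d-1)^k$, a contradiction.

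The principal technical obstacle is realising the deep step: given only that $u\in N_G^k(X')$, one must actually produce a concrete length-$(k+1)$ extension into $X'$ with fresh interior whose sequential additions each preserve $(d,m)$-extendability. This amounts to a $k$-fold iteration of Lemma~\ref{lemma:adding:leaf} along a path whose final vertex is constrained to $X'$, requiring the $(2d+2)m+m'+1$ buffer from that lemma at every intermediate step — precisely the reason for the hypothesis $|G|\geq|R|+|X|+|T|+10dm+2k$, which reserves enough slack for all $k$ iterations simultaneously while retaining room for the subsequent neutral/good steps of Lemma~\ref{lemma:covering:0}'s base strategy.
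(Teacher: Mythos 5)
The paper does not actually prove this lemma; it is cited as following directly from \cite[Lemma~4.1]{montgomery2019}, so there is no in-text argument to compare against. Your overall scheme --- run the good/neutral iteration of Lemma~\ref{lemma:covering:0}, but at well-separated marker vertices of $T$ attempt ``deep'' length-$k$ probes into $X'$, and argue via extendability-expansion plus $m$-joinedness that enough probes must succeed to drive $|X'|$ below $2m/(d-1)^k$ --- is the right kind of plan, but as written it has two concrete gaps.

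First, the descending paths $P_q$ need not exist, and the claim that after discarding $q$'s too close to a leaf ``one still retains at least $2|X|$ usable $q$'s'' is unjustified and can fail completely. For a spider rooted at its centre with $L\le d/4$ legs of length $4k+4$, the $L$ leaf-tips form a $(4k+4)$-separated set in which \emph{every} $q$ is a leaf and so has no descending path at all; in general the number of bad $q$'s is controlled only by the number of leaves of $T$, which need not be comparable to $|X|$. The fix is to move each marker up: replace $q$ by its $k$-th ancestor $q^{-k}$, so that $q^{-k},q^{-k+1},\dots,q$ is the descending path of length $k$ and the probe is attempted when the embedding reaches $q^{-k}$. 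At most one $q\in Q$ lacks a $k$-th ancestor (two such $q$'s would both lie within distance $k$ of $t$, contradicting $(4k+4)$-separation), and the shifted markers remain $(2k+4)$-separated, which is all the pairwise disjointness of the probe paths needs --- this gap is exactly why the hypothesis asks for $(4k+4)$- rather than $(2k+2)$-separation.

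Second, the deep step itself is asserted but never realised, and you attribute its feasibility to the vertex-count hypothesis when the real difficulty lies elsewhere. Lemma~\ref{lemma:adding:leaf} guarantees that \emph{some} neighbour of $s$ outside $V(S)$ can be added while preserving $(d,m)$-extendability; it does not let you choose which, so ``a $k$-fold iteration of Lemma~\ref{lemma:adding:leaf} along a path whose final vertex is constrained to $X'$'' is not an available move --- after $k$ blind applications you have no control over whether $v_{k-1}$ is adjacent to $X'$, and rolling back with Lemma~\ref{lemma:remove:leaves} gives no progress without a blacklisting device those lemmas do not provide. Relatedly, the quantity $|N_G^j(X')|$ you iterate is the ordinary $j$-step neighbourhood, whereas condition~\eqref{def:extendability} bounds only the part of the neighbourhood lying \emph{outside} $V(R\cup I(X)\cup S)$, which is precisely where the probe interior must sit; the expansion, saturation at $2m$, and final appeal to $m$-joinedness have to be carried out for these restricted sets. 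What is missing is a genuine counting argument that the set of fresh vertices reachable from the current frontier in $j$ steps, with each step individually preserving $(d,m)$-extendability, grows geometrically; the buffer $10dm+2k$ supplies the room such an argument needs but does not substitute for it.
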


We can now prove Lemma~\ref{lemma:iterativeembed}. The proof takes a tree decomposition using Corollary~\ref{corollary:ngammadecomposition}, embed the first piece to cover most of $X$ using Lemma~\ref{lemma:covering:1}, and then repeatedly uses Proposition~\ref{prop:tree:separate} and Lemma~\ref{lemma:covering:1} to embed the remaining pieces while covering more and more of the uncovered vertices in $X$ until all the vertices in $X$ are covered.

\begin{proof}[Proof of Lemma~\ref{lemma:iterativeembed}] 
Let $\gamma_2=\gamma/10$, $\gamma_1=\gamma_2/16\Delta$ and $T'=T-t'$. Let $\ell$ be the smallest integer such that $2m/(d-1)^{\ell-1}<1$, so that $\ell\geq 9$ as $m\geq d^8$. By minimality, we also have that $2m\geq(d-1)^{\ell-2}$, and thus, using $d\gg\Delta^3/\gamma$, we have
\begin{equation}\label{eqn:gamma1}
|T'|\geq 2d^2m-1\geq d^2(d-1)^{\ell-2}-1>(16\Delta/\gamma_2)^{\ell+1}.
\end{equation}
Then, by Corollary~\ref{corollary:ngammadecomposition}, there is a $(\gamma_1,\gamma_2)$-descending tuple $\mathbf{n}=(n_1,\cdots,n_\ell)\in\mathbb{N}^{\ell}$ such that $T'$ has an $\mathbf{n}$-decomposition $(T_1,\ldots,T_\ell)$ with $t\in V(T_1)$.
Let $t_0=t$ and, for each $i\in[\ell-1]$, let $t_{i}$ be the unique vertex shared by $T_i$ and $T_{i+1}$.

Note that $\sum_{i=2}^\ell n_i\leq 2\gamma_2n_1\leq 2\gamma_2|T|$, so
\[
|T_1|\geq(1-2\gamma_2)|T|-1\geq(1-\gamma/2)|T|\geq|X|+\gamma|T|/2\geq |X|+\Delta m+3.
\]
As $|G|\geq|T|+20dm$, we may apply Lemma~\ref{lemma:covering:0} to find a copy $S_1$ of $T_1$ in $G$ in which $t$ is copied to $v$, $S_1\cup I(X)$ is $(d,m)$-extendable in $G$ and $|X\setminus V(S_1)|<m$. Next, for each $2\leq j\leq \ell$ in turn, do the following.
\stepcounter{propcounter}
\begin{enumerate}[label = \textbf{\Alph{propcounter}$_j$}]
\item If possible, find a copy $S_j$ of $T_j$ in $G-(\cup_{i\in [j-1]}V(S_i)\setminus \{v_{j-1}\})$ such that $t_{j-1}$ is copied to $v_{j-1}$, $(\cup_{i\in [j]}S_i)\cup I(X)$ is $(d,m)$-extendable in $G$, and $|X\setminus (\cup_{i\in [j]}S_i)|<2m/(d-1)^{j-1}$.\label{stepDi}
\end{enumerate}
Suppose that for some $2\leq j\leq \ell$ this was not possible. Note that the set $X_j=X\setminus  (\cup_{i\in [j-1]}S_i)$ satisfies $|X_j|<2m/(d-1)^{j-2}$, and $|T_j|=n_j+1\geq\gamma_1^{j-1}n_1\geq\gamma_1^{j-1}|T|/2\geq\gamma_1^{j-1}d^2m$. Hence, we may apply Proposition~\ref{prop:tree:separate} to obtain a set $Q_j$ of vertices in $T_j$ that is $(4j)$-separated, with 
$$|Q_j|\geq\frac{|T_j|}{16j\Delta^{2j-1}}\geq\frac{\gamma_1^{j-1}d^2m}{16j\Delta^{2j-1}}\ge \frac{6m}{(d-1)^{j-2}}\cdot\frac{\gamma_1^{j-1}(d-1)^j}{96j\Delta^{2j-1}}
\geq\frac{6m}{(d-1)^{j-2}}\geq 3|X_j|,$$
where the second last inequality follows from $d\gg\Delta^3/\gamma$. Furthermore, we have
$$|\cup_{i\in[j-1]}S_i|+|X_j|+|T_j|\leq |T|+|X_j|\leq |T|+2m\leq |G|-10dm-2(j-1),$$
so we may apply Lemma~\ref{lemma:covering:1} with $R=\cup_{i\in [j-1]}S_i$, $X=X_j$, $T=T_j$, $r=v_{j-1}$ and $t=t_{j-1}$ to find a copy $S_j$ of $T_{j}$, satisfying all the required conditions in~\ref{stepDi}, which is a contradiction.

Therefore, we can complete \ref{stepDi} for each $2\leq j\leq \ell$. Taking $S'=\cup_{j\in [\ell]}S_j$, we have, then, a copy of $T'$ in which $t$ is copied to $v$ such that $I(X)\cup S'$ is $(d,m)$-extendable and $|X\setminus V(S')|<2m/(d-1)^{\ell-1}<1$. In other words, the copy $S'$ of $T'$ covers $X$. Let $s'$ be the vertex of $S'$ which needs a leaf added to make $S'$ into a copy of $T$.
As $|G|\geq |S'|+(2d+2)m+m+1$, by Lemma~\ref{lemma:adding:leaf} there is some $v'\in V(G)\setminus V(S')\subset V(G)\setminus X$ which is a neighbour of $s'$. Let $S=S'+s'v'$, and note that this is a copy of $T$  which covers $X$, in which $t$ is copied to $v$ and $t'$ is copied into $V(G)\setminus X$, as required.
\end{proof}


\section{Proof of Theorem~\ref{theorem:k=2}}\label{section:k=2}
In this section, we prove Theorem~\ref{theorem:k=2}, where we embed a tree $T$ with $n-m+1$ vertices and $\Delta(T)\leq\Delta$ into an $n$-vertex $(m,\mu n)$-joined $(D,m)$-expander graph $G$, where $m\leq \mu n$ and $\mu,1/D,1/m\ll 1/\Delta$. We wish to do the following. First, we partition $T$ into trees $T_1\cup\ldots \cup T_\ell$ with geometrically-decreasing size. We then use the sizes of the trees in this partition to inform the sizes of a partition of all but at most $m/4$ vertices of $G$ as $V_0\cup V_1\cup \ldots \cup V_\ell$. Then, for each $i\in [\ell]$ in turn, we embed $T_i$ into $G[V_{i-1}\cup V_i]$, attached appropriately to the existing embedding, so that, if $i<\ell$, all the unused vertices in $V_{i-1}$ are covered, and the sole vertex shared by $T_i$ and $T_{i+1}$ is embedded into $V_i$. This is depicted in Figure~\ref{fig:embeddingpic}.

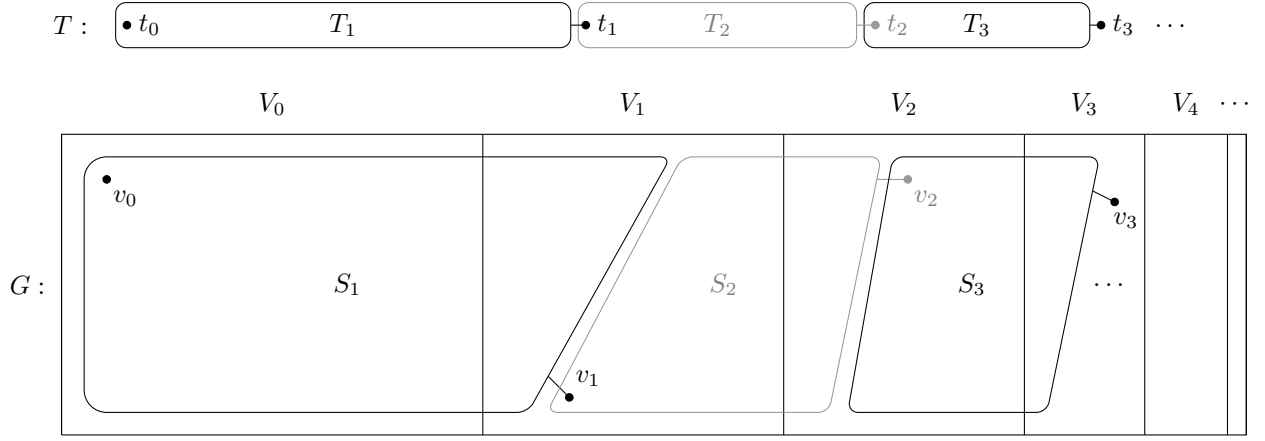
\begin{figure}
\begin{center}
\begin{tikzpicture}

\def\wi{16}
\def\hgt{4}
\def\n{2}
\def\m{11}
\def\vx{0.05cm}

\coordinate (A1) at (0,0);
\coordinate (A2) at ($(\wi,0)-\wi*(0.65,0)$);
\coordinate (A3) at ($(\wi,0)-\wi*(0.4,0)$);
\coordinate (A4) at ($(\wi,0)-\wi*(0.2,0)$);
\coordinate (A5) at ($(\wi,0)-\wi*(0.1,0)$);
\coordinate (A6) at ($(\wi,0)-\wi*(0.03125,0)$);
\coordinate (A7) at ($(\wi,0)-\wi*(0.015625,0)$);
\coordinate (A8) at ($(\wi,0)-\wi*(0.015625,0)$);

\draw [rounded corners] ($(A1)+(0.3,0)$) -- ++(0,-0.3) -- ($(A2)+(0.75,-0.3)$) -- ($(A2)+(0.75,0.3)$) -- ($(A1)+(0.3,0.3)$) -- ($(A1)+(0.3,0)$);
\draw ($0.5*(A1)+0.5*(A2)+0.5*(0.3+0.75,0)$) node {$T_1$};
\draw [rounded corners,black!40] ($(A2)+(0.85,0)$) -- ++(0,-0.3) -- ($(A3)+(0.55,-0.3)$) -- ($(A3)+(0.55,0.3)$) -- ($(A2)+(0.85,0.3)$) -- ($(A2)+(0.85,0)$);
\draw [black!50] ($0.5*(A2)+0.5*(A3)+0.5*(0.85+0.55,0)$) node {$T_2$};
\draw [rounded corners] ($(A3)+(0.65,0)$) -- ++(0,-0.3) -- ($(A4)+(0.45,-0.3)$) -- ($(A4)+(0.45,0.3)$) -- ($(A3)+(0.65,0.3)$) -- ($(A3)+(0.65,0)$);
\draw ($0.5*(A3)+0.5*(A4)+0.5*(0.65+0.45,0)$) node {$T_3$};

\coordinate (t0) at ($(0.45,0)$);
\coordinate (t1) at ($(A2)+(0.95,0)$);
\coordinate (t2) at ($(A3)+(0.8,0)$);
\coordinate (t3) at ($(A4)+(0.6,0)$);

\draw ($(A1)-(0.3,0)$) node {$T:$};

\draw [fill] (t0)  circle [radius=\vx];
\draw [fill] (t1)  circle [radius=\vx];
\draw  (t1) -- ++($0.2*(-1,0)$);
\draw [fill=black!40,black!40] (t2)  circle [radius=\vx];
\draw [black!40] (t2) -- ++($0.25*(-1,0)$);
\draw [fill] (t3)  circle [radius=\vx];
\draw  (t3) -- ++($0.15*(-1,0)$);

\draw ($(t0)+(0.3,0)$) node {$t_0$};
\draw ($(t1)+(0.3,0)$) node {$t_1$};
\draw [black!50] ($(t2)+(0.3,0)$) node {$t_2$};
\draw ($(t3)+(0.3,0)$) node {$t_3$};

\draw ($(-0.6,0)+0.5*(A5)+0.5*(A6)$) node {$\dots$};

\end{tikzpicture}
\end{center}

\vspace{0.1cm}

\begin{tikzpicture}

\def\wi{16}
\def\hgt{4}
\def\n{2}
\def\m{11}
\def\vx{0.05cm}

\coordinate (A1) at (0,0);
\coordinate (A2) at ($(\wi,0)-\wi*(0.65,0)$);
\coordinate (A3) at ($(\wi,0)-\wi*(0.4,0)$);
\coordinate (A4) at ($(\wi,0)-\wi*(0.2,0)$);
\coordinate (A5) at ($(\wi,0)-\wi*(0.1,0)$);
\coordinate (A6) at ($(\wi,0)-\wi*(0.03125,0)$);
\coordinate (A7) at ($(\wi,0)-\wi*(0.015625,0)$);
\coordinate (A8) at ($(\wi,0)-\wi*(0.015625,0)$);

\foreach \x in {1,...,7}
{
\coordinate (B\x) at ($(A\x)+(0,\hgt)$);
}

\foreach \x in {1,...,7}
{
\draw (A\x) -- (B\x);
}

\draw (A1) -- (B1) -- (B7) -- (A7) -- (A1);

\foreach \x/\y/\z in {1/2/0,2/3/1,3/4/2,4/5/3,5/6/4}
{
\draw ($0.5*(B\x)+0.5*(B\y)+(0,0.4)$) node {$V_\z$};
}
\foreach \x/\y/\z in {6/7/\cdots}
{
\draw ($0.5*(B\x)+0.5*(B\y)+(0,0.4)$) node {$\z$};
}

\coordinate (t0) at ($(B1)+(0.6,-0.6)$);
\coordinate (t1) at ($(A2)+(1.15,0.5)$);
\coordinate (t2) at ($(B3)+(1.65,-0.6)$);
\coordinate (t3) at ($(B4)+(1.2,-0.9)$);

\draw [fill] (t0)  circle [radius=\vx];
\draw [fill] (t1)  circle [radius=\vx];
\draw  (t1) -- ++($0.28*(-1,1)$);
\draw [fill=black!40,black!40] (t2)  circle [radius=\vx];
\draw [black!40] (t2) -- ++($0.41*(-1,0)$);
\draw [fill] (t3)  circle [radius=\vx];
\draw  (t3) -- ++($0.29*(-1,0.5)$);

\draw ($0.5*(A1)+0.5*(B1)-(0.45,0)$) node {$G:$};

\draw ($(t0)+(0,0.3)$) to [out=180,in=90] ++(-0.3,-0.3) to [out=270,in=90] ++(0,-\hgt+1.2) to [out=270,in=180] ++(0.3,-0.3) -- ($(A2)+(0,0.3)$);

\draw [rounded corners] ($(A2)+(0,0.3)$) -- ($(A2)+(0.6,0.3)$) -- ($(B2)+(2.5,-0.3)$) -- ($(B2)+(0,-0.3)$) -- ($(t0)+(0,0.3)$);

\draw [rounded corners,black!40] ($(B3)+(0,-0.3)$) -- ($(B2)+(2.65,-0.3)$) -- ($(A2)+(0.85,0.3)$) -- ($(A3)+(0.6,0.3)$) -- ($(B3)+(1.3,-0.3)$) -- ($(B3)+(0,-0.3)$);
\draw [rounded corners,black] ($(B4)+(0,-0.3)$) -- ($(B3)+(1.45,-0.3)$) -- ($(A3)+(0.85,0.3)$) -- ($(A4)+(0.3,0.3)$) -- ($(B4)+(1,-0.3)$) -- ($(B4)+(0,-0.3)$);

\draw ($0.5*(B5)+0.5*(A6)-(1,0)$) node {$\dots$};

\draw ($0.5*(A1)+0.5*(B2)+(1,0)$) node {$S_1$};
\draw [black!50] ($0.5*(A2)+0.5*(B3)+(1.2,0)$) node {$S_2$};
\draw ($0.5*(A3)+0.5*(B4)+(0.9,0)$) node {$S_3$};

\draw ($(t0)+(0.25,-0.25)$) node {$v_0$};
\draw ($(t1)+(0.25,+0.25)$) node {$v_1$};
\draw [black!50] ($(t2)+(0.25,-0.25)$) node {$v_2$};
\draw ($(t3)+(0.15,-0.25)$) node {$v_3$};

\end{tikzpicture}
\caption{To embed $T$ for Theorem~\ref{theorem:k=2}, we divide $T$ into $T_1\cup\ldots\cup T_\ell$ using Lemma~\ref{lemma:ngammadecomposition}, then find an accompanying vortex partition $V_0\cup V_1\cup \ldots \cup V_\ell$ (of a large subgraph $G'$ of $G$) using Lemma~\ref{lemma:vortex:partition}. We then use Lemma~\ref{lemma:iterativeembed} to iteratively embed $T_i$ into $G[V_{i-1}\cup V_i]$, where, if $i<\ell$, any unused vertices in $V_{i-1}$ are covered and the common vertex $t_i$ of $T_i$ and $T_{i+1}$ is embedded to $v_i$ in $V_{i}$.}
\label{fig:embeddingpic}
\end{figure}

To embed each $T_i$, except for the last piece, $T_\ell$, where we have very few spare vertices, we use Lemma~\ref{lemma:iterativeembed}. In order to do this in the manner described for the partition $V_0\cup V_1\cup \ldots \cup  V_\ell$, we need several properties, which we record, as follows, as a vortex-partition. Essentially, we require the sets to have (approximately) the right size (see \ref{prop:vorpart1}), and that, for each $i\in [\ell]$, the subgraph we are using to embed $T_i$ (which will be a subgraph of $G[V_{i-1}\cup V_i]$) is joined (see \ref{prop:vorpart2}) and the set of any remaining vertices in $V_{i-1}$ to be covered is, considered as a empty graph, extendable (which will follow from \ref{prop:vorpart3}).

\begin{definition}[Vortex partition]\label{defn:vortexpart} Let $d\in \mathbb{N}$ and $\lambda>0$, and let $\mathbf{n}\in\mathbb N^{\ell+1}$. For a graph $G$ an $(\mathbf{n},\lambda,d)$-vortex-partition in $G$ is a partition $V_0\cup V_1\cup \ldots\cup V_\ell$ of $V(G)$ such that the following hold.
\stepcounter{propcounter}
\begin{enumerate}[label = \textbf{\Alph{propcounter}\arabic{enumi}}]
\item For each $i\in [\ell]_0$, $|V_i|=(1\pm \lambda)n_i$.\label{prop:vorpart1}
\item For each $i\in [\ell]$, $G[V_{i-1}\cup V_i]$ is $(\lambda n_{i-1})$-joined.\label{prop:vorpart2}
\item For each $i\in [\ell]$, $I(V_{i-1})$ is $(d,\lambda n_{i-1})$-extendable in $G[V_{i-1}\cup V_i]$.\label{prop:vorpart3}
\end{enumerate}
\end{definition}
We need the vortex partition that we find to cover all but very few (at most $m/4$) vertices in the graph $G$. To do this, we first find an initial `vortex' in Section~\ref{sec:vortex}, before `cleaning' this into a vortex partition in Section~\ref{sec:expandvortex}. We can then carry out our embedding in Section~\ref{sec:mainthmk=2}, completing the proof of Theorem~\ref{theorem:k=2}.

\subsection{Vortices}\label{sec:vortex}

To find our vortex partition, we first find a vortex in our graph $G$, which is a \emph{nested} sequence of vertex sets  with an increasingly good expansion property as the sets decrease in size. We define this precisely as follows.

\begin{definition}[Vortex] Let $\ell$ be a non-negative integer, $\lambda>0$, $m\in\mathbb N$, and let $\mathbf{n}=(n_0,\dots,n_{\ell})\in\mathbb N^{\ell+1}$. For a graph $G$ on $n_0$ vertices, an $(\mathbf{n},m,\lambda)$-vortex in $G$ is a sequence of subsets $U_0\supset U_1\supset\ldots\supset U_\ell$ such that $U_0=V(G)$ and, for each $i\in [\ell]$,
\stepcounter{propcounter}
\begin{enumerate}[label = \textbf{\Alph{propcounter}\arabic{enumi}}]
\item $|U_i|=n_i$, and\label{prop:vor1}
\item every subset $U\subset U_{i-1}$ with $|U|=m$ satisfies $|N(U,U_i)|\ge (1-\lambda)|U_i|$.\label{prop:vor2}
\end{enumerate}
\end{definition}

For an appropriate descending tuple $\mathbf{n}=(n_0,\ldots,n_\ell)\in \mathbb{N}^{\ell+1}$ and an $n$-vertex graph $G$ which is $(m,\mu n)$-joined we will find an $(\mathbf{n},m,2\lambda)$-vortex $U_0\supset U_1\supset \ldots \supset U_{\ell}$ in $G$ (with $\mu\ll \lambda$) by randomly choosing such a nested subsequence subject only to $|U_i|=n_i$ for all $i\in [\ell]_0$, and showing that it is an $(\mathbf{n},m,2\lambda)$-vortex with high probability (see Lemma~\ref{lemma:vortex}). In this we are inspired by a similar vortex used by Barber, K\"uhn, Lo, and Osthus~\cite{barber2016edge} in a graph with high minimum degree, and we use some similar calculations in the analysis. 

More challengingly though, we want to show that we can ensure that $G[U_{\ell-1}]$ is also $(\lambda m)$-joined. To this end, we additionally choose the vortex with two vertex sets $V_0\supset V_1$ such that, with high probability, $V_0$ and $V_1$ fit into the randomly chosen nested subsequence that will be (with high probability) our vortex so that $V_1$ contains $U_{\ell-1}$.
Then, we show that with high probability, every $m$-set in $V_0$ has at most $\lambda m$ non-neighbours in $V_0$, and, conditioned on this, that every $(\lambda m)$-set in $V_1$ has at most $\lambda m$ non-neighbours in $V_0$. For this, we need the random sets to jump down in size by more than the standard ratio between the sets in the vortex, which is why we take the additional sets $V_0$ and $V_1$. We also use sets $W$ and $W'$ chosen disjointly which will fit within $U_1\setminus U_2$ with high probability, for a similar ease of analysis for the expansion of small sets into $U_1\setminus U_2$. Using all this, we prove the following lemma finding the vortex that we will need.

\begin{lemma}\label{lemma:vortex}
Let
\[
\frac{1}{m}\ll  \mu \ll\frac{1}{K},\frac{1}{D}\ll\lambda,\gamma_1,\gamma_2\leq \frac{1}{9},
\]
with $\gamma_1<\gamma_2$ and let $n$ be such that $m\leq \mu n$.
Let $G$ be an $n$-vertex $(m,\mu n)$-joined graph which is a $(D,m)$-expander, and let $\mathbf{n}=(n_0,n_1,\dots, n_\ell)$ be a $(\gamma_1,\gamma_2)$-descending tuple with $n_0=n$ and $\gamma_1Km\leq n_\ell\leq 2Km$.

Then, $G$ contains an $(\mathbf{n},m,2\lambda)$-vortex $U_0\supset U_1\supset \ldots \supset U_{\ell}$ such that $G[U_{\ell-1}]$ is $\lambda m$-joined and, for every $U\subset V(G)$ with $|U|=\lfloor \frac{m}{4}\rfloor$, $|N_G(U,U_1\setminus U_2)|\ge \gamma_1 Dm/200$.
\end{lemma}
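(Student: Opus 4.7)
The plan is to construct the vortex by random sampling and verify each required property via Chernoff-type concentration plus a union bound. I will order $V(G)$ uniformly at random, set $U_i$ to be the first $n_i$ vertices for each $i \in [\ell]_0$ (so the nested containment is automatic), and simultaneously sample auxiliary nested sets $V_0 \supset V_1 \supset U_{\ell-1}$ of carefully calibrated sizes just above $n_{\ell-1}$, along with two disjoint random sets $W, W'$ that, with high probability, lie inside $U_1 \setminus U_2$. The sets $V_0, V_1$ will be used to force the $\lambda m$-joinedness of $G[U_{\ell-1}]$ — jumping down by more than the standard vortex ratio gives a cleaner Chernoff setup, as indicated in the motivating sketch — while $W$ and $W'$ serve as independence-bookkeeping devices for the small-set expansion into $U_1 \setminus U_2$.

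For the vortex expansion property~\ref{prop:vor2}, fix an $m$-set $U \subset V(G)$. Since $G$ is $(m, \mu n)$-joined, $U$ has at most $\mu n + m \le 2\mu n$ non-neighbours in $V(G)$. Conditional on $U \subset U_{i-1}$, the non-neighbour count of $U$ in $U_i$ is hypergeometric with mean at most $2\mu n_i$, and an upper-tail Chernoff bound (as in Lemma~\ref{lemma:chernoff}, in its large-deviation form) gives failure probability at most $\exp(-\Omega(\lambda n_i \log(\lambda/\mu)))$. Using $n_i \ge \gamma_1 K m$, $n/m \le 1/\mu$, and the hierarchy $\mu \ll 1/K \ll \lambda$, this is small enough to absorb the union bound over at most $\binom{n}{m} \le (e/\mu)^m$ choices of $U$ and all $\ell$ levels. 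The small-set expansion into $U_1 \setminus U_2$ is handled similarly: for an $\lfloor m/4\rfloor$-set $U$ the $(D, m)$-expander gives $|N_G(U)| \ge Dm/4$, and since $|U_1 \setminus U_2| \ge (1 - \gamma_2)n_1 \ge (8/9)\gamma_1 n$ the expected intersection is $\Omega(\gamma_1 Dm)$, to which Chernoff and the union bound apply, with $W, W'$ keeping the independence clean.

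The main obstacle is showing that $G[U_{\ell-1}]$ is $\lambda m$-joined. My plan is to first use the above Chernoff argument to show that, with high probability, every $m$-set $U \subset V_0$ has at most $\lambda m$ non-neighbours in $V_0$. I will then upgrade this to the statement that every $\lambda m$-set $A \subset V_1$ has strictly fewer than $\lambda m$ non-neighbours in $V_0$. Since $1/D \ll \lambda$, the expander gives $|A \cup N_G(A)| \ge (D+1)\lambda m \ge m$, so applying $(m, \mu n)$-joinedness to $A \cup N_G(A)$ bounds the number of vertices at graph-distance $\ge 3$ from $A$ by $\mu n$; a Chernoff bound trims their contribution in $V_0$ down to $o(\lambda m)$. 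The distance-$2$ contribution in $V_0$ is controlled by combining the first property with an appropriately chosen $m$-extension of $A$ inside $V_0$, and calibrating $|V_0|$ so that both Chernoff bounds close simultaneously is the technically delicate step. Once this upgrade is in place, $\lambda m$-joinedness of $G[U_{\ell-1}] \subset G[V_1]$ is immediate: any disjoint $\lambda m$-sets $A, B \subset U_{\ell-1}$ without an edge in $G$ would force $B$ to sit among strictly fewer than $\lambda m$ non-neighbours of $A$ in $V_0$, contradicting $|B| = \lambda m$. A final union bound over the $O(\ell)$ failure events concludes the proof, yielding a vortex with all the required properties with positive probability.
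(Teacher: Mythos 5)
Your overall framework — sampling nested sets randomly, using auxiliary sets $V_0 \supset V_1$ jumping down faster than the vortex ratio for the joinedness claim, and disjoint sets $W, W'$ for the small-set expansion — matches the paper's architecture, and your treatment of the basic vortex property~\ref{prop:vor2} is essentially sound. However, there are two genuine gaps in the harder claims.

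For the $\lambda m$-joinedness of $G[U_{\ell-1}]$, your upgrade step is broken. You want to show every $\lambda m$-set $A \subset V_1$ has fewer than $\lambda m$ non-neighbours in $V_0$, and you propose to do this by splitting the non-neighbours by graph distance: distance $\geq 3$ is at most $\mu n$ via joinedness of $A \cup N_G(A)$, and distance $2$ is ``controlled by an $m$-extension of $A$.'' But $|A| = \lambda m < m$, so neither $(m,\mu n)$-joinedness nor the expander bounds the \emph{total} non-neighbourhood of $A$ in $V(G)$ — it can be nearly all of $V(G)$, with almost all of it at distance exactly $2$ from $A$. Extending $A$ to an $m$-set $U$ and invoking the first property only bounds the non-neighbours of $U$, and the added vertices $U \setminus A$ may dominate many distance-$2$ vertices from $A$, so you learn nothing about them. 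The fix is that no distance decomposition is needed: once you know $G[V_0]$ is $(m,\lambda m)$-joined, any $\lambda m$-set $A \subset V_0$ automatically has fewer than $m$ non-neighbours in $V_0$ (else take an $m$-subset of them as a witness violating $(m,\lambda m)$-joinedness), and then the subsampling $V_1 \subset V_0$ at density $p_1 \ll \lambda$ reduces this below $\lambda m$ in expectation with a clean Chernoff / union-bound calculation over $\lambda m$-sets.

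For the small-set expansion $|N_G(U, U_1 \setminus U_2)| \geq \gamma_1 D m / 200$ for all $\lfloor m/4 \rfloor$-sets $U$, a direct Chernoff plus union bound over all $\binom{n}{\lfloor m/4 \rfloor}$ choices of $U$ does not close. The exponential gain from Chernoff is $\exp(-\Theta(\gamma_1 D m))$, but the union bound costs $\exp(\Theta(m \log(n/m))) \geq \exp(\Theta(m\log(1/\mu)))$, and since $\mu \ll 1/D$ in the hierarchy you have $\log(1/\mu) \gg D$, so the union bound wins. The paper avoids this by first applying Proposition~\ref{prop:expander:subgraphnewnew} to the realised set $W$ to extract a deterministic bad set $B_W$ of size $< m$; every $U$ mostly outside $B_W$ already expands into $W$ with no probability needed, and the residual concentration argument via $W'$ only has to union-bound over subsets of $B_W$, i.e.\ at most $2^m$ terms, which $\exp(-\Theta(\gamma_1 D m))$ comfortably beats. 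Saying ``$W, W'$ keep the independence clean'' gestures at the two sets but does not capture this essential reduction of the union bound.
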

\begin{proof}
Let $K_0$ be such that $\mu\ll \frac{1}{K_0}\ll\frac1K$ and let $k\in [\ell]$ be some index with $\gamma_1K_0m\leq n_k\leq K_0m$. Let $p_0=\frac{2n_k}{n_0}$ and $q=\frac{n_1-n_2}{4n_0}$, and take random disjoint sets $V_0$, $W$ and $W'$ in $V(G)$ so that the location of each vertex is selected uniformly at random and is in $V_0$ with probability $p_0$, in $W$ with probability $q$ and in $W'$ with probability $q$.
Let $V_1\subset V_0$ be formed by including each element of $V_0$ independently at random with probability $p_1:=\frac{2n_{\ell-1}}{n_k}$. Let $F$ be the event that $n_k\leq |V_0|\leq 3n_k$, $n_{\ell-1}\leq |V_1|\leq 9n_{\ell-1}$, and $\frac{n_1-n_2}{6}\le |W|,|W'|\le \frac{n_1-n_2}{3}$. 
If $F$ holds, then let $U_0\supset U_1\supset \ldots \supset U_{\ell}$ be a sequence of sets chosen uniformly at random subject to $U_{k-1}\supset V_0\supset U_{k}$, $U_{\ell-2}\supset V_1\supset U_{\ell-1}$, $U_1\setminus U_2\supset W\cup W'$, and $|U_i|=n_i$ for each $i\in [\ell]_0$ (where we have used that $|W\cup W'|\le \frac{2(n_1-n_2)}{3}$,  $|V_1|\leq 9n_{\ell-1}\leq n_{\ell-1}/\gamma_2\leq n_{\ell-2}$, and, similarly, $|V_0|\leq n_{k-1}$). If $F$ does not hold, then let $U_0\supset U_1\supset \ldots \supset U_{\ell}$ be a sequence of sets chosen uniformly at random subject to $U_i=n_i$ for each $i\in [\ell]_0$.

\begin{claim}\label{claim:three}
With probability more than $3/4$, $F$ holds.
\end{claim}
\begin{claim}\label{claim:one}
With probability more than $3/4$, $U_0\supset U_1\supset \ldots \supset U_{\ell}$ is an $(\mathbf{n},m,2\lambda)$-vortex in $G$.
\end{claim}
\begin{claim}\label{claim:oneandahalf}
With probability more than $3/4$, for every set $U\subset V(G)$ with $|U|=\lfloor \frac{m}{4}\rfloor$, we have $|N_G(U,W\cup W')|\geq \gamma_1 Dm/200$.
\end{claim}
\begin{claim}\label{claim:two}
With probability more than $3/4$, $G[V_1]$ is $(\lambda m)$-joined.
\end{claim}
These claims easily imply the lemma. Indeed, by them, we have that, with positive probability, $F$ holds, $U_0\supset U_1\supset \ldots \supset U_{\ell}$ is an $(\mathbf{n},m,2\lambda)$-vortex in $G$, every subset $U\subset V$ with $|U|=\lfloor \frac{m}{4}\rfloor$ satisfies $|N(U, W\cup W')|\ge \gamma_1Dm/200$, and $G[V_1]$ is $(\lambda m)$-joined. Moreover, as $F$ holds, we have $U_{\ell-1}\subset V_1$ and thus $G[U_{\ell-1}]$ is $(\lambda m)$-joined, and, as $W\cup W'\subset U_1\setminus U_2$, we have that every subset $U\subset V(G)$ with $|U|=\lfloor \frac{m}{4}\rfloor$ satisfies $|N(U, U_1\setminus U_2)|\ge \gamma_1Dm/200$, as required. Thus, it is left only to prove the four claims.

\begin{proof}[Proof of Claim~\ref{claim:three}] Now, by Lemma~\ref{lemma:chernoff}, using $p_0=2n_k/n_0$ and $n_k\geq \gamma_1K_0m$, we have
\[
\P\left(\big||V_0|-2n_k\big|>n_k\right)\leq 2\exp(-2n_k/12)=2\exp(-n_k/6)< 1/16.
\]
Conditioning on $n_k\leq |V_0|\leq 3n_k$ and again by Lemma~\ref{lemma:chernoff}, using $p_1=2n_\ell/n_k$ and $n_\ell\geq \gamma_1Km$, we have
\[
\P(|V_1|\leq n_\ell\text{ or }|V_1|\geq9n_\ell)\leq\P(||V_1|-p_1|V_0||\geq p_1|V_0|/2)\leq 2\exp(-n_\ell/6)< 1/16.
\]
Similarly, by Lemma~\ref{lemma:chernoff}, using $q=\frac{n_1-n_2}{4n_0}$ and $n_2\leq\gamma_2n_1$, we have for each $i\in[2]$,
\begin{equation}\label{equation:size:W1}\mathbb P\left(\big||W_i|-\tfrac{n_1-n_2}{4}\big|>\tfrac{n_1-n_2}{12}\right)\le 2\exp(-(n_1-n_2)/108)<\frac{1}{16}.
\end{equation}
Thus, with probability more than $1-4/16=3/4$, we have $n_k\leq |V_0|\leq 3n_k$,  $n_\ell\leq |V_1|\leq 9n_\ell$, and $\frac{n_1-n_2}{6}\le|W|,|W'|\le \frac{n_1-n_2}{3}$, and hence $F$ holds.
\renewcommand{\qedsymbol}{$\boxdot$}
\qedhere
\renewcommand{\qedsymbol}{$\square$}
\end{proof}

\begin{proof}[Proof of Claim~\ref{claim:one}] Note that, as $V_0,V_1,W,W'$ are themselves random sets, the sets $U_0\supset U_1\supset \ldots \supset U_{\ell}$ have the same distribution as a nested sequence of sets chosen uniformly at random subject to $|U_i|=n_i$ for each $i\in [\ell]_0$, whether $F$ holds or not. Now, for each $i\in [\ell]_0$, let $\eps_i=\left({m}/{n_i}\right)^{1/4}$ and $\mathbf{n}_i=(n_0,n_1,\dots,n_i)$.
For each $i\in [\ell]$, let $E_i$ be the event that $U_0\supset U_1\supset \ldots \supset U_{i-1}$ is an $(\mathbf{n}_i,m,\lambda+\eps_{i-1})$-vortex, but $U_0\supset U_1\supset \ldots \supset U_{i}$ is not an $(\mathbf{n}_i,m,\lambda+\eps_i)$-vortex.
As $\eps_0\geq 0$ and $\lambda\geq\mu$, $U_0=V(G)$ is an $(\mathbf{n}_0,m,\lambda+\eps_0)$-vortex. If no event $E_i$, $i\in [\ell]$, holds, then $U_0\supset U_1\supset \ldots \supset U_{\ell}$ is an $(\mathbf{n}_0,m,\lambda+\eps_\ell)$-vortex, and hence an $(\mathbf{n}_0,m,2\lambda)$-vortex as $\eps_\ell=(m/n_\ell)^{1/4}\leq (\gamma_1 K)^{-1/4}\leq\lambda$.
Therefore, to prove the claim it is sufficient to show that
\[
\sum_{i\in [\ell]}\P(E_i)<\frac{1}{4}.
\]
Let $i\in [\ell]$, and suppose that $U_0\supset U_1\supset \ldots \supset U_{i-1}$ is an $(\mathbf{n}_{i-1},m,\lambda+\eps_{i-1})$-vortex. For each $U\subset U_{i-1}$ with $|U|=m$, as $|N(U,U_{i-1})|\geq (1-\lambda-\eps_{i-1})n_{i-1}$, we have $\E|N(U,U_i)|\geq (1-\lambda-\eps_{i-1})n_{i}\ge n_i/2$ and, by Lemma~\ref{lemma:chernoff} with $\varepsilon=(m/n_i)^{1/3}$, that
\begin{equation}\label{eqn:first}
\P\left(|N(U,U_i)|< \Big(1-\lambda-\eps_{i-1}-\left(\frac{m}{n_i}\right)^{1/3}\Big)n_i\right)\leq 2\exp\left(-\left(\frac{m}{n_i}\right)^{2/3}\cdot \frac{n_i}{6}\right)=2\exp\left(-\frac{m}{6}\cdot \left(\frac{n_i}{m}\right)^{1/3}\right).
\end{equation}
Now, as $n_{i}\leq \gamma_2n_{i-1}$, we have
\begin{equation}\label{eqn:second}
\eps_i-\eps_{i-1}=\left(\frac{m}{n_i}\right)^{1/4}-\left(\frac{m}{n_{i-1}}\right)^{1/4}
=\left(\frac{m}{n_i}\right)^{1/4}\left(1-\left(\frac{n_i}{n_{i-1}}\right)^{1/4}\right)
\geq \left(\frac{m}{n_i}\right)^{1/4}(1-\gamma_2^{1/4})\geq \left(\frac{m}{n_i}\right)^{1/3},
\end{equation}
as $n_i\geq n_\ell\geq  \gamma_1 Km$ and $1/K\ll \gamma_1,\gamma_2$, so that $(1-\gamma_2^{1/4})\geq(1/\gamma_1K)^{1/12}\geq (m/n_i)^{1/12}$.
Furthermore, in preparation for taking a union bound over all sets $U\subset U_{i-1}$ with size $m$, as $\gamma_1n_{i-1}\leq n_{i}$, we have
\begin{align}
\binom{n_{i-1}}{m}\cdot 2\exp\left(-\frac{m}{6}\cdot \left(\frac{n_i}{m}\right)^{1/3}\right)&\leq \left(\frac{en_{i-1}}{m}\right)^m\cdot 2\exp\left(-\frac{m}{6}\cdot \left(\frac{n_i}{m}\right)^{1/3}\right)\nonumber\\
&\leq 2\exp\left(\frac{m}{6}\cdot \left(6\log\left(\frac{e n_i}{\gamma_1 m}\right)-\left(\frac{n_i}{m}\right)^{1/3}\right)\right)\nonumber\\
&\leq 2\exp\left(-\frac{m}{12}\cdot \left(\frac{n_i}{m}\right)^{1/3}\right)\leq \frac{m}{n_i},\label{eqn:third}
\end{align}
where the last line of inequalities holds as $1/K\ll \gamma_1,\gamma_2$ and $n_i/m\geq n_\ell/m\geq \gamma_1K$.
Therefore, by \eqref{eqn:first}, \eqref{eqn:second} and \eqref{eqn:third}, and by taking a union bound, we have that, with probability more than $1-(m/n_i)$, for each $U\subset U_{i-1}$ with $|U|=m$ we have $|N(U,U_i)|\geq (1-\lambda-\eps_i)n_i$.

Hence, we have
\[
\sum_{i\in [\ell]}\P(E_i)\leq \sum_{i\in [\ell]}\frac{m}{n_i}\leq \sum_{i\in [\ell]}\frac{m}{n_\ell}\cdot \gamma_2^{\ell-i}\leq \frac{1}{1-\gamma_2}\cdot \frac{m}{n_\ell}\leq \frac{2m}{n_\ell}\leq \frac{2}{\gamma_1K}< \frac{1}{4},
\]
as required.
\renewcommand{\qedsymbol}{$\boxdot$}
\end{proof}
\renewcommand{\qedsymbol}{$\square$}
\begin{proof}[Proof of Claim~\ref{claim:oneandahalf}]
Let $E$ be the event that $|W|\ge \frac{n_1-n_2}{6}$. By~\eqref{equation:size:W1}, $\mathbb P(E)\geq 7/8$. Note that, if $E$ holds, then $|W|\ge {\gamma_1 n}/{8}\ge\mu n+(2\gamma_1D+2)m$,
as $n_1-n_2\ge (1-\gamma_2)\gamma_1 n\ge 7\gamma_1n/8$ and $\mu\ll1/D\ll \gamma_1$. Thus, for every choice of $W$ for which $E$ holds, we can apply Proposition~\ref{prop:expander:subgraphnewnew} to find $B_W\subset V(G)$ such that $|B_W|<m$ and, for each $U\subset V(G)\setminus B_W$ with $|U|\le m$, we have $|N_G(U,W)|\ge \gamma_1D|U|$. 

Let $E'$ be the event that $E$ holds and $|N_G(U,W')|\ge \gamma_1 Dm/100$ for every $U\subset B_W$ with $|U|=\lfloor m/8\rfloor$ and $|N_G(U,W)|< \gamma_1 Dm/100$. Note that, conditioned on any choice of $W$ for which $E$ holds, as $G$ is a $(D,m)$-expander, for every set $U\subset B_W$ with $|U|=\lfloor m/8\rfloor$ and $|N_G(U,W)|< \gamma_1 Dm/100$, we have $|N_G(U,V(G)\setminus W)|\geq D|U|-\gamma_1Dm/100\geq D|U|/2$. Thus, by Lemma~\ref{hypergeom} with $\varepsilon=1/1000$ and a union bound, 
\begin{equation*}
\mathbb{P}(E'\text{ does not hold}|E\text{ holds})\leq \binom{m}{\lfloor{m/8}\rfloor}\cdot 2\exp\left(\frac{-\gamma_1Dm}{6\cdot 10^8}\right)<\frac{1}{8},
\end{equation*}
as $1/m,1/D\ll \gamma_1$. Therefore, $\P(E' \text{ holds})\geq \P(E)-\P(E'\text{ does not hold}|E\text{ holds})>3/4$.

We claim that if $E'$ holds, then, for every subset $U\subset V(G)$ with $|U|=\lfloor {m}/{4}\rfloor$, $|N_G(U,W\cup W')|\ge \gamma_1Dm/200$. Indeed, for a set $U\subset V(G)$ with $|U|=\lfloor m/4\rfloor$, if $|U\cap B_W|\ge \lfloor m/8\rfloor$, then either $|N_G(U,W)|\ge \gamma_1 Dm/100-|U|\ge {\gamma_1 Dm}/{200}$, or, as $E'$ holds, $|N_G(U,W')|\ge \gamma_1 Dm/100-|U|\ge {\gamma_1 Dm}/{200}$. On the other hand, if $|U\cap B_W|<\lfloor m/8\rfloor$, then $|U\setminus B_W|\geq \lfloor m/8\rfloor$, and so by the choice of $B_W$, $|N_G(U)\cap W|\ge\gamma_1Dm/8-|U|\ge {\gamma_1 Dm}/{200}$, as required. Since $E'$ holds with probability in excess of $3/4$, the claim follows.
\renewcommand{\qedsymbol}{$\boxdot$}
\end{proof}
\renewcommand{\qedsymbol}{$\square$}
\begin{proof}[Proof of Claim~\ref{claim:two}]
We first show that $G[V_0]$ is not $(m,\lambda m)$-joined with probability less than $1/12$. Note that, for each $U\subset V(G)$ with $|U|=m$ we have $|V(G)\setminus (U\cup N(U))|\leq \mu n$, and therefore
\begin{equation}\label{eqn:forV1}
\P(U\subset V_0\text{ and }|V_0\setminus(U\cup N(U))|> \lambda m)\leq p_0^m\cdot \binom{\mu n}{\lambda m}p_0^{\lambda m}.
\end{equation}
Thus, the probability that $G[V_0]$ is not $(m,\lambda m)$-joined is, as $p_0=2n_k/n_0=2n_k/n$, at most
\begin{align}
\binom{n}{m}p_0^m\cdot \binom{\mu n}{\lambda m}p_0^{\lambda m}
&\leq \left(\frac{ep_0n}{m}\right)^m\cdot \left(\frac{ep_0\mu n}{\lambda m}\right)^{\lambda m}= \left(\frac{2en_k}{m}\right)^m\cdot \left(\frac{2e\mu n_k}{\lambda m}\right)^{\lambda m}
\leq \left(2eK_0\right)^m\cdot \left(\frac{2e\mu K_0}{\lambda}\right)^{\lambda m}\nonumber\\
&= \left(2eK_0\right)^{(1+\lambda)m}\cdot \left(\frac{\mu}{\lambda}\right)^{\lambda m}\leq K_0^{3m}\cdot \left(\frac{\mu}{\lambda}\right)^{\lambda m}
\leq \frac{1}{12},\label{eqn:forV2}
\end{align}
where the last inequality follows as $\mu\ll1/K_0\ll\lambda$.

From the proof of Claim~\ref{claim:three}, we have $\P(|V_0|> 4n_k)< 1/12$. Furthermore, if $G[V_0]$ is $(m,\lambda m)$-joined and $|V_0|\leq 4n_k$, then, by similar calculations to those in \eqref{eqn:forV1} and \eqref{eqn:forV2} and as $p_1=2n_{\ell-1}/n_k$, $n_{\ell-1}\leq 2Km/\gamma_1$ and $n_k\geq \gamma_1K_0m$, we have that $G[V_1]$ is not $(\lambda m)$-joined with probability at most
\begin{align*}
\binom{4n_k}{\lambda m}p_1^{\lambda m}\cdot \binom{m}{\lambda m}p_1^{\lambda m}
&\leq \left(\frac{4ep_1n_k}{\lambda m}\right)^{\lambda m}\cdot \left(\frac{ep_1 m}{\lambda m}\right)^{\lambda m}
= \left(\frac{16e^2n_{\ell-1}^2}{\lambda^2mn_k}\right)^{\lambda m}
\leq  \left(\frac{64e^2K^2}{\lambda^2\gamma_1^3K_0}\right)^{\lambda m}< \frac{1}{12},
\end{align*}
where the last inequality holds as $1/K_0\ll1/K\ll \lambda,\gamma_1$ and $1/m\ll \lambda$.

Altogether then, with probability in excess of $3/4$, $G[V_1]$ is $(\lambda m)$-joined.
\renewcommand{\qedsymbol}{$\boxdot$}
\qedhere
\renewcommand{\qedsymbol}{$\square$}
\qedsymbol
\end{proof}
\renewcommand{\qedsymbol}{}
\end{proof}
\renewcommand{\qedsymbol}{$\square$}


\subsection{Vortex partition}\label{sec:expandvortex}
We now use Lemma~\ref{lemma:vortex} to find a vortex $U_0\supset U_1\supset \ldots \supset U_{\ell}$ and then run a simple `cleaning procedure' to find disjoint sets $V_1,V_2,\ldots,V_\ell$ for a vortex partition (see Definition~\ref{defn:vortexpart}) such that $V_i$ is a subset of $U_i\setminus U_{i+1}$ which contains almost all those vertices for each $i\in [\ell]$, before then completing the vortex partition by using all but at most $m/4$ of the remaining vertices not in $\cup_{i\in [\ell]}V_i$ to form $V_0$. This will give us the following result.

\begin{lemma}\label{lemma:vortex:partition}
Let
\[
\frac{1}{m}\ll\mu \ll\frac{1}{K},\frac{1}{D}\ll\lambda\ll\frac1d\ll\gamma_1,\gamma_2\leq \frac{1}{16},
\]
with $\gamma_1<\gamma_2$ and
let $n$ be such that $m\leq \mu n$.
Let $G$ be an $n$-vertex $(m,\mu n)$-joined graph which is a $(D,m)$-expander. Let $\ell\in \mathbb{N}$ and let $\mathbf{n}=(n_0,n_1,\dots,n_\ell)$ be a $(\gamma_1,\gamma_2)$-descending tuple with $n=\sum_{i\in [\ell]_0}n_i$ and $\gamma_1Km\leq n_\ell\leq 2Km$.

Then, $G$ contains a subgraph $G'$ with at least $n-\lfloor{m/4}\rfloor$ vertices that has an $(\mathbf{n},2\lambda,d)$-vortex partition $V_0\cup V_1\cup \ldots \cup V_{\ell}$ such that $G[V_{\ell-1}\cup V_{\ell}]$ is $(\lambda m)$-joined.
\end{lemma}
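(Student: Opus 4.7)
Here is the plan. The strategy is to apply Lemma~\ref{lemma:vortex} to obtain a vortex $U_0 \supset U_1 \supset \cdots \supset U_\ell$ in $G$ and then use the rings $U_i \setminus U_{i+1}$ as the basis of the partition. To make the ring sizes match the target tuple $\mathbf{n}$ rather than its partial sums, I would first build an auxiliary tuple $\mathbf{n}^\star = (n_0^\star, \ldots, n_\ell^\star)$ with $n_i^\star := \sum_{j=i}^{\ell} n_j$, so that $n_0^\star = n$, $n_\ell^\star = n_\ell \in [\gamma_1 Km, 2Km]$, and crucially $n_i^\star - n_{i+1}^\star = n_i$. A short computation using the geometric tail estimate $\sum_{j\ge i} n_j \le n_i/(1-\gamma_2) \le 2n_i$ shows $\mathbf{n}^\star$ is $(\gamma_1/2, \gamma_2)$-descending, so the hierarchy required by Lemma~\ref{lemma:vortex} is satisfied (with $\gamma_1/2$ in place of $\gamma_1$), and applying it yields a vortex with $|U_i| = n_i^\star$, the strong $m$-subset expansion $|N(A, U_i)| \ge (1-2\lambda)|U_i|$ for every $m$-subset $A \subseteq U_{i-1}$, $G[U_{\ell-1}]$ being $(\lambda m)$-joined, and the small-subset expansion $|N(U, U_1 \setminus U_2)| \ge \gamma_1 Dm/400$ for every $|U| = \lfloor m/4 \rfloor$.

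Next I would set $V_i^0 := U_i \setminus U_{i+1}$ for $i \in [\ell-1]$, $V_\ell^0 := U_\ell$, and $V_0^0 := V(G) \setminus U_1$, obtaining a partition of $V(G)$ with $|V_i^0| = n_i$ for every $i \in [\ell]_0$. The two joinedness conditions in Definition~\ref{defn:vortexpart} come essentially for free from what we already have: since $\lambda n_{i-1} \ge \lambda \gamma_1 Km \gg m + \mu n$ by the hierarchy, the $(m, \mu n)$-joinedness of $G$ ensures that $G[V_{i-1}^0 \cup V_i^0]$ is $(2\lambda n_{i-1})$-joined for each $i \in [\ell]$; and since $V_{\ell-1}^0 \cup V_\ell^0 \subseteq U_{\ell-1}$, the stronger statement that $G[V_{\ell-1}^0 \cup V_\ell^0]$ is $(\lambda m)$-joined is inherited from $G[U_{\ell-1}]$. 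The size constraint $|V_i| = (1 \pm 2\lambda) n_i$ will hold automatically once the cleaning below is shown to remove only a small fraction of $\lambda n_i$ from each level.

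It remains to ensure extendability. By Proposition~\ref{prop:weakerextendability}, I need, for each $i \in [\ell]$, that $|N(U, V_i)| \ge d|U|$ for every $U \subseteq V_{i-1} \cup V_i$ with $1 \le |U| \le 4\lambda n_{i-1}$. The regime $|U| \ge m$ is handled directly by the vortex: any $m$-subset $A \subseteq U \subseteq U_{i-1}$ yields $|N(A, V_i^0)| \ge |V_i^0| - 2\lambda|U_i| - |A| \ge n_i - 4\lambda n_i^\star$, which exceeds $d|U|$ since $n_i \ge \gamma_1 n_{i-1}$ and $d\lambda \ll \gamma_1$. The regime $|U| < m$ is the main technical obstacle, and I would address it by cleaning: apply Proposition~\ref{prop:expander:subgraphnewnew} at each level using the ``restricted joinedness'' inherited from the vortex (every $m$-subset of $V_{i-1}^0 \cup V_i^0$ has at most $2\lambda n_i^\star + m$ non-neighbours in $V_i^0$) to extract defect sets $W_i$ so that every $U \subseteq V_{i-1}^0 \cup V_i^0 \setminus W_i$ with $|U| \le m$ satisfies $|N(U, V_i^0 \setminus W_i)| \ge d|U|$. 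The hardest part of the proof will be the budget control: a naive bound of $|W_i| < m$ per level could sum to $\ell m$, far exceeding the allowed $\lfloor m/4 \rfloor$, so I would sharpen this by observing that the strong small-subset expansion guaranteed by Lemma~\ref{lemma:vortex} into $V_1^0 = U_1\setminus U_2$ already handles level $i=1$ without any cleaning, and by a more careful double-counting exploiting the $(D,m)$-expander strength of $G$ to force $|W_i|$ on the inner levels to be much smaller than $m/\ell$. Finally, setting $V_i := V_i^0 \setminus W_i$ and $V_0 := V_0^0 \setminus W^*$ for an absorbing set $W^* \subseteq V(G) \setminus U_1$ of size at most $\lfloor m/4 \rfloor - \sum_i |W_i|$ yields the required subgraph $G'$ with $|V(G')| \ge n - \lfloor m/4 \rfloor$.
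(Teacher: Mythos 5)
Your overall plan matches the paper's: replace $\mathbf{n}$ by the partial-sum tuple $\mathbf{n}^\star$ (the paper calls it $\mathbf{n}'$), check it is $(\gamma_1/2,\gamma_2)$-descending, invoke Lemma~\ref{lemma:vortex}, take the rings $V_i^0 = U_i\setminus U_{i+1}$, and clean via Proposition~\ref{prop:expander:subgraphnewnew}. The joinedness deductions and the use of the small-subset expansion into $U_1\setminus U_2$ are also the paper's.

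The genuine gap is in your budget control for the cleaning sets. You correctly observe that the naive bound $|W_i|<m$ at every level would force the total defect toward $\ell m$, but the fix you propose --- a ``more careful double-counting exploiting the $(D,m)$-expander strength of $G$ to force $|W_i|$ on the inner levels to be much smaller than $m/\ell$'' --- is unsubstantiated and in general false: the $(D,m)$-expander property controls only external expansion of small sets, and a set of $m-1$ vertices of $V_{i-1}^0\cup V_i^0$ could plausibly have all its expansion directed away from $V_i^0$, so Proposition~\ref{prop:expander:subgraphnewnew} may genuinely return $|W_i|=m-1$ at each level. No sharpening of the per-level bound is available from the hypotheses. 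The paper's resolution is different and cleaner: the sets $W_j$ for $j\in[\ell]$ are \emph{not deleted from the graph} at all. After defining $V_j = V_j'\setminus(W_j\cup W_{j+1})$ for $j\in[\ell]$, it sets $V_0 = V(G)\setminus(W_0\cup V_1\cup\cdots\cup V_\ell)$, so every vertex removed from the inner rings is swept into $V_0$; the only vertices actually discarded from $G'$ are those in $W_0$, a \emph{single} defect set of size $<m/4$ produced by one maximal-bad-set argument against the expansion of $V_1$, using the vortex's guaranteed $|N(U,U_1\setminus U_2)|\gtrsim\gamma_1 Dm$ for $\lfloor m/4\rfloor$-sets. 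Parking the $W_j$ in $V_0$ is harmless because the only condition $V_0$ must satisfy is that $I(V_0)$ be $(d,2\lambda n_0)$-extendable in $G[V_0\cup V_1]$, which involves expansion \emph{into} $V_1$ and is unaffected by adding further vertices to $V_0$ (so long as $W_0$ is cut away). This structural observation is what makes the lemma work; your plan as written does not supply it.
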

\begin{proof}
Let $\mathbf{n}'=(n_0',n_1',\ldots,n_\ell')$ with $n_i'=\sum_{j=i}^\ell n_i$ for each $i\in [\ell]_0$.
Note that $\mathbf{n}'$ is $(\gamma_1/2,\gamma_2)$-descending, $n'_j/2\leq n_j\leq n'_j$ for all $j\in[\ell]_0$, and $\gamma_1 Km\leq n'_\ell\leq 2Km$.
Therefore, by Lemma~\ref{lemma:vortex}, $G$ contains an $(\mathbf{n}',m,2\lambda)$-vortex $U_0\supset U_1\supset \ldots \supset U_{\ell}$ such that $G[U_{\ell-1}]$ is $(\lambda m)$-joined and, for every set $U\subset V(G)$ with $|U|=\lfloor \frac{m}{4}\rfloor$, $|N_G(U,U_1\setminus U_2)|\geq \gamma_1Dm/200$.
For each $i\in [\ell-1]_0$, let $V'_i=U_i\setminus U_{i+1}$, and let $V'_\ell=U_\ell$. Note that, for each $i\in [\ell]_0$, $|V'_i|=n_i$.


Let $W_{\ell+1}=\emptyset$. Now, for each $j=\ell,\ell-1,\ldots,1$ in turn, do the following, where $W_{j+1}$ is a set chosen (if $j<\ell$) in the previous iteration such that $|W_{j+1}|<m$.
\stepcounter{propcounter}
\begin{enumerate}[label = \textbf{\Alph{propcounter}\arabic{enumi}}]
\item Note that, as $\lambda\ll\gamma_1$ and $d\ll K$, we have $|V_{j}'\setminus W_{j+1}|\geq n_j-m\geq 2\lambda n_{j-1}+(2d+2)m$.\label{prop:constructvorpart1}
\item Note that, by \ref{prop:vor2} and $V'_{j-1}\cup V'_j\subset U_{j-1}$, we have that $G[V'_{j-1}\cup (V_j'\setminus W_{j+1})]$ is $(m,\lambda n'_{j-1})$-joined, and hence $(m,2\lambda n_{j-1})$-joined.\label{prop:constructvorpart2}
\item Using Proposition~\ref{prop:expander:subgraphnewnew}, take $W_j\subset V'_{j-1}\cup (V_j'\setminus W_{j+1})$ such that $|W_j|<m$ and, for each $U\subset (V'_{j-1}\cup V_j')\setminus (W_j\cup W_{j+1})$ with $|U|\leq m$, $|N_G(U,V_{j}'\setminus(W_j\cup W_{j+1}))|\geq d|U|$.\label{prop:constructvorpart3}
\item Let $V_j=V_j'\setminus (W_{j}\cup W_{j+1})$.\label{prop:constructvorpart4}
\end{enumerate}
Now, for every set $U\subset V(G)$ with $|U|=\lfloor{m/4}\rfloor$, we have
$$|N_G(U,V_1)|\geq|N_G(U,V'_1)|-|W_1\cup W_2|\geq \gamma_1Dm/200-2m\geq\gamma_1Dm/500,$$
as $1/D\ll\gamma_1$. Let $W_0\subset V(G)$ be a maximal subset subject to $|W_0|< m/2$ and $|N_G(W_0,V_1)|\leq 10d|W_0|$. Similarly to the proof of Proposition~\ref{prop:expander:subgraphnewnew} and using $d\ll\gamma_1D$, we have $|W_0|<m/4$, and, for each $U\subset V(G)\setminus W_0$ with $|U|\le m/4$, $|N_G(U,V_1)|\geq 10d|U|$.
Let $V_0=V(G)\setminus (W_0\cup V_1\cup \ldots \cup V_\ell)$ and $G'=G[V_0\cup V_1\cup \ldots \cup V_\ell]$.

Note that $|G'|\geq n-m/4$, and that, as $V_{\ell-1}\cup V_{\ell}\subset V_{\ell-1}'\cup V_{\ell}'=U_{\ell-1}$, $G[V_{\ell-1}\cup V_{\ell}]$ is $(\lambda m)$-joined. Therefore, to complete the proof it is left only to show that $V_0\cup V_1\cup\dots \cup V_\ell$ is an $(\mathbf{n},2\lambda,d)$-vortex partition of $G'$.

Since $V'_0\setminus W_0\subset V_0\subset V_0'\cup W_1\cup \ldots \cup W_\ell$, $|W_0|\leq \floor{m/4}$ and $|W_1\cup \ldots \cup W_\ell|\leq\ell m$, we have $(1-2\lambda)n_0\leq|V_0|\leq(1+2\lambda)n_0$, proving the $j=0$ case of \ref{prop:vorpart1}. For all $j\in[\ell]$, $|V'_j|=n_j$, $V_j=V'_j\setminus (W_j\cup W_{j+1})$ and $|W_j|,|W_{j+1}|<m\leq\lambda n_{j}$, so we have $|V_j|\geq n_j-2m\geq(1-2\lambda)n_j$ and \ref{prop:vorpart1} holds. Now let $2\leq j\leq \ell$. Since $V_{j-1}\subset V'_{j-1}$ and $V_j\subset V_j'\setminus W_{j+1}$, $G[V_{j-1}\cup V_j]$ is $(m,2\lambda n_{j-1})$-joined by \ref{prop:constructvorpart2}, hence $(2\lambda n_{j-1})$-joined, and thus \ref{prop:vorpart2} holds. For \ref{prop:vorpart3}, let $U\subset V_{j-1}\cup V_j\subset (V'_{j-1}\cup V'_j)\setminus (W_j\cup W_{j+1})$ satisfy $|U|\leq 4\lambda n_{j-1}$. If $|U|\leq m$, we have $|N(U,V_j)|\geq d|U|$ by \ref{prop:constructvorpart3}. If $|U|>m$, then since $G[V_{j-1}\cup V_j]$ is $(m,2\lambda n_{j-1})$-joined and $d\lambda\ll\gamma_1$, we have $$|N(U,V_j)|\geq|V_j|-|U|-2\lambda n_{j-1}\geq(n_j-2m)-4\lambda n_{j-1}-2\lambda n_{j-1}\geq 4d\lambda n_{j-1}\geq d|U|.$$ Hence, $I(V_{j-1})$ is $(d,2\lambda n_{j-1})$-extendable in $G[V_{j-1}\cup V_j]$ by Proposition~\ref{prop:weakerextendability}, and so \ref{prop:vorpart3} holds.

For the $j=1$ case, \ref{prop:vorpart2} holds as $G\supset G[V_0\cup V_1]$ is $(m,\mu n)$-joined, thus $(m,2\lambda n_0)$-joined and $(2\lambda n_0)$-joined. Let $U\subset V_{0}\cup V_1$ satisfy $|U|\leq 4\lambda n_0$. If $|U|\leq \floor{m/4}$, then $|N(U,V_1)|\geq10d|U|\geq d|U|$. If $\floor{m/4}<|U|\leq m$, then let $U'\subset U$ have size $\floor{m/4}$, and thus \[|N(U,V_1)|\geq|N(U',V_1)|-|U\setminus U'|\geq10d|U'|-|U|\geq2dm-m\geq dm\geq d|U|.\] If $|U|>m$, then, since $G[V_0\cup V_1]$ is $(m,2\lambda n_0)$-joined, we have 
\[|N(U,V_1)|\geq|V_1|-|U|-2\lambda n_0\geq n_1-2m-6\lambda n_0\geq 4d\lambda n_0\geq d|U|.\]
Thus, we have that $I(V_0)$ is $(d,2\lambda n_0)$-extendable in $G[V_0\cup V_1]$, proving the $j=1$ case of \ref{prop:vorpart3}. This completes the proof that $V_0\cup V_1\cup\ldots\cup V_\ell$ is an $(\mathbf{n},d,2\lambda)$-vortex partition of $G'$.
\end{proof}

\subsection{Proof of Theorem~\ref{theorem:k=2}}\label{sec:mainthmk=2}

Using Lemma~\ref{lemma:vortex:partition}, we can now prove Theorem~\ref{theorem:k=2} following the outline at the start of this section.

\begin{proof}[Proof of Theorem~\ref{theorem:k=2}]
Let $d=10^4\Delta^5$, $\gamma=1/10\Delta$ and note that $d\gg\Delta^3/\gamma$. Let
\[\frac1m\ll\mu \ll \frac1K,\frac1D\ll \lambda\ll \frac1d.\]
Let $\gamma_1=\gamma/4\Delta$ and $\gamma_2=2\gamma$, and let $T$ be a tree with $n-m+1$ vertices and $\Delta(T)\le\Delta$. Using Lemma~\ref{lemma:ngammadecomposition}, we can find some $\ell\in\mathbb N$, some $\mathbf{n'}=(n'_1,\ldots,n'_\ell)\in\mathbb{N}^\ell$ that is $(\gamma_1,\gamma_2)$-descending with $\gamma Km/3\Delta\leq n'_\ell\leq Km$, and an $\mathbf{n}'$-decomposition $(T_1,\ldots,T_\ell)$ of $T$.
For $1\le i\le\ell-1$, let $t_{i}$ be the leaf of $T_i$ in $T_{i+1}$, and let $t_{\ell}$ be an arbitrary leaf of $T_{\ell}$ which is not $t_{\ell-1}$.
As every tree with at least 2 vertices has at least 2 leaves, we may also take a leaf $t_0$ of $T_1$ which is not $t_1$.

Let $n_0=\ceil{(1-\gamma_2)n'_1}$, let $n_j=\floor{\gamma_2n_j'}+\ceil{(1-\gamma_2)n'_{j+1}}$ for each $j\in [\ell-1]$, and let $n_\ell=\floor{\gamma_2n'_\ell}+m-1$.
Note that, for each $j\in [\ell]$, $n_j\leq n_j'$ and 
\begin{equation}\label{eqn:nisum}
\sum_{i=j}^\ell n_i= \floor{\gamma_2n_j'}+\sum_{i=j+1}^{\ell}n'_i+(m-1),
\end{equation} and
\begin{equation}\sum_{i=0}^\ell n_i=\sum_{i=1}^\ell n'_i+(m-1)=|T|+(m-1)=n.\label{eqn:nisumall}
\end{equation}
Moreover, using $\mathbf{n}'$ is $(\gamma_1,\gamma_2)$-descending, we can verify that $\textbf{n}=(n_0,n_1,\ldots,n_{\ell})$ is $(\gamma_1/2,3\gamma_2)$-descending.
Let $K'=K/5\Delta$ and note that $n_\ell=\floor{\gamma_2n'_\ell}+m-1\leq 2\gamma_2n'_\ell\leq 2K\gamma_2 m=2Km/5\Delta=2K'm$ as $\gamma_2=1/5\Delta$, while $n_\ell\geq \floor{\gamma_2n'_\ell}\geq \gamma_2\cdot \gamma Km/6\Delta\geq \gamma_2\cdot 5\gamma K'm/6=\gamma K'm/6\Delta\geq \gamma_1K'm/2$ as $\gamma_1=\gamma/4\Delta$. Since $\mu\ll 1/K'\ll \lambda$, using Lemma~\ref{lemma:vortex:partition}, we can take a subgraph $G'$ of $G$ with at least $n-\lfloor m/4\rfloor$ vertices which has an $(\mathbf{n},2\lambda,d)$-vortex partition, $V_0\cup V_1\cup \ldots \cup V_{\ell}$ say, such that $G[V_{\ell-1}\cup V_{\ell}]$ is $(\lambda m)$-joined.

For each $j\in[\ell]_0$, say that we have a \textit{stage j situation} if we have distinct vertices $v_0,v_1,\ldots,v_{j}$ and copies $S_1,\ldots,S_j$ of the trees $T_1,\ldots, T_j$, respectively, so that the following hold.
\stepcounter{propcounter}
\begin{enumerate}[label = \textbf{\Alph{propcounter}\arabic{enumi}}]
\item If $i\in[j]_0$, then $v_{i}\in V_{i}$.\label{prop:stagejsit1}
\item If $i\in[j]$, then $\cup_{i'=0}^{i-1}V_{i'}\subset \cup_{i'=1}^{i}V(S_{i'})\subset \cup_{i'=0}^{i}V_{i'}$.\label{prop:stagejsit2}
\item For each $i\in[j]$, $t_{i-1}$ is copied to $v_{i-1}$ in $S_{i}$ and $t_{i}$ is copied to $v_{i}$ in $S_i$.\label{prop:stagejsit3}
\item For each $1\leq i<i'\leq j$, $V(S_i)\cap V(S_{i'})=\{v_{i}\}\cap \{v_{i'-1}\}$.\label{prop:stagejsit4}
\end{enumerate}
Arbitrarily, pick $v_0\in V_0$, and note that this gives us a stage $0$ situation as \ref{prop:stagejsit1} holds and \ref{prop:stagejsit2}--\ref{prop:stagejsit4} are vacuous.
Furthermore, if we have a stage $\ell$ situation, with vertices $v_0,v_1,\ldots,v_{\ell}$ and copies $S_1,\ldots,S_\ell$ of the trees $T_{1},\ldots, T_\ell$, respectively, then \ref{prop:stagejsit3} and \ref{prop:stagejsit4} imply that $\cup_{i\in [\ell]}S_i$ is a copy of $T=\cup_{i\in [\ell]}T_i$ in $G'$, and hence $G$ has a copy of $T$, as required. Thus, the lemma is implied by the following claim and induction.

\begin{claim}\label{clm:improvesituation}
For each $j\in [\ell]$, if we have a stage $j-1$ situation, then we can create a stage $j$ situation.
\end{claim}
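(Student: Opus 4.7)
Fix $j\in[\ell]$ and suppose we have a stage $j-1$ situation. Define $X_j:=V_{j-1}\setminus V(S_{j-1})$ for $j\geq 2$ and $X_1:=V_0\setminus\{v_0\}$, so that $X_j$ is the set of uncovered vertices of $V_{j-1}$ lying outside $\{v_{j-1}\}$. Let $H_j:=G'[X_j\cup\{v_{j-1}\}\cup V_j]$. The plan is to apply Lemma~\ref{lemma:iterativeembed} to $H_j$ with tree $T_j$, designated vertex $t:=t_{j-1}$, distinguished leaf $t':=t_j$, target $v:=v_{j-1}$, covering set $X:=X_j$, and the expansion parameter $d$ and tolerance $\gamma$ from the outer proof. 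The lemma produces a copy $S_j$ of $T_j$ in $H_j$ that covers $X_j$, maps $t_{j-1}$ to $v_{j-1}$, and maps $t_j$ to some vertex $v_j$ outside $X_j$; since $t_j\neq t_{j-1}$ and the embedding is injective, $v_j\in V(H_j)\setminus(X_j\cup\{v_{j-1}\})=V_j$. Together with $S_1,\dots,S_{j-1}$, the copy $S_j$ realises a stage $j$ situation: \ref{prop:stagejsit1} and \ref{prop:stagejsit3} are immediate, \ref{prop:stagejsit2} holds since $S_j$ covers all of $X_j$ and $V(S_j)\subset V_{j-1}\cup V_j$, and \ref{prop:stagejsit4} follows from $V(H_j)\cap V_i=\emptyset$ for $i<j-1$.

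The crucial choice is the joinedness parameter $m^*$ fed to Lemma~\ref{lemma:iterativeembed}: for $j<\ell$ take $m^*:=2\lambda n_{j-1}$, while for $j=\ell$ take the much smaller $m^*:=\lambda m$, exploiting the stronger joinedness clause in Lemma~\ref{lemma:vortex:partition}. The joinedness of $H_j$ at this scale follows from \ref{prop:vorpart2} (respectively the stronger clause) by monotonicity of joinedness under vertex deletion. The extendability of $I(X_j\cup\{v_{j-1}\})$ in $H_j$ is inherited from the $(d,2\lambda n_{j-1})$-extendability of $I(V_{j-1})$ in $G'[V_{j-1}\cup V_j]$ provided by \ref{prop:vorpart3}: any $U\subset V(H_j)$ with $|U|\leq 2m^*\leq 4\lambda n_{j-1}$ satisfies $N_{H_j}(U,V_j)=N_G(U,V_j)$ (because $V_j\subset V(H_j)$) and $U\cap(X_j\cup\{v_{j-1}\})=U\cap V_{j-1}$ (because $V(H_j)\cap V_{j-1}=X_j\cup\{v_{j-1}\}$), so the defining extendability inequality descends verbatim.

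For the remaining size hypotheses ($|T_j|\geq 2d^2m^*$, $|X_j|\leq(1-\gamma)|T_j|$, $|H_j|\geq|T_j|+20dm^*$), routine counting using \ref{prop:vorpart1} and \ref{prop:stagejsit2} together with~\eqref{eqn:nisum} yields $|X_j|=(1-\gamma_2)n'_j+O(\ell m)$ and, for $j<\ell$, $|H_j|=n'_j+(1-\gamma_2)n'_{j+1}+O(\ell m)$. Here the refined error $O(\ell m)$ (stronger than what the bound $|V_i|=(1\pm 2\lambda)n_i$ would give individually) traces back to the construction in Lemma~\ref{lemma:vortex:partition}, where each $W_i$ has size less than $m$; and $\ell m$ is negligible relative to $\gamma n'_j$ under the hierarchy $\mu\ll 1/K\ll\lambda\ll 1/d\ll\gamma_1$. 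Since $\gamma_2=2\gamma$, this gives $|X_j|\leq(1-\gamma)|T_j|$, and the remaining inequalities hold since $\lambda\ll\gamma_1/d$.

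The main obstacle is the final step $j=\ell$. Here the same count gives $|H_\ell|=n'_\ell+m+O(\ell m)$, so the slack $|H_\ell|-|T_\ell|$ is only about $m$, dramatically less than in earlier steps. Choosing $m^*=2\lambda n_{\ell-1}$ would demand slack of order $d\lambda n_{\ell-1}\gg m$, which is unavailable. This is exactly why Lemma~\ref{lemma:vortex:partition} was engineered to additionally produce the $(\lambda m)$-joinedness of $G'[V_{\ell-1}\cup V_\ell]$: it licenses the tighter choice $m^*=\lambda m$, for which $|H_\ell|\geq|T_\ell|+20d\lambda m$ follows comfortably from $\lambda\ll 1/d$.
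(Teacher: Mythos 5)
Your argument is correct and follows the paper's overall strategy: identify $X_j=V_{j-1}\setminus V(S_{j-1})$, set $H_j=G'[X_j\cup\{v_{j-1}\}\cup V_j]$, and apply the covering-embedding machinery, using the much tighter $(\lambda m)$-joinedness of $G'[V_{\ell-1}\cup V_\ell]$ to rescue the final step where the slack $|H_\ell|-|T_\ell|$ drops to $\Theta(m)$. You have correctly put your finger on the reason the last stage is special and why Lemma~\ref{lemma:vortex:partition} was engineered the way it was.

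Where you genuinely diverge from the paper is in the treatment of the final step $j=\ell$. The paper does \emph{not} apply Lemma~\ref{lemma:iterativeembed} there; it instead applies the much lighter Corollary~\ref{corollary:extendable:embedding} with $R=I(\{v_{\ell-1}\})$ and joinedness parameter $\lambda m$, producing a copy $S_\ell$ of $T_\ell$ rooted at $v_{\ell-1}$ \emph{without} covering $X_\ell$. This is legitimate because the conclusion of Theorem~\ref{theorem:k=2} is extracted from the stage-$\ell$ situation via \ref{prop:stagejsit3} and \ref{prop:stagejsit4} alone; the $i=\ell$ instance of \ref{prop:stagejsit2} (i.e., $X_\ell\subset V(S_\ell)$) is never used, and it is far simpler not to enforce it. Your version, which instead applies Lemma~\ref{lemma:iterativeembed} with $m^*=\lambda m$, does additionally cover $X_\ell$ and thus satisfies the stage-$\ell$ situation to the letter of its definition — but at the cost of having to check all the hypotheses of Lemma~\ref{lemma:iterativeembed} once more, most notably $m^*\geq d^8$, i.e.\ $\lambda m\geq d^8$. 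This does hold under the hierarchy (since $1/m\ll\mu\ll\lambda\ll 1/d$ forces $m\gg d^8/\lambda$), but you do not explicitly verify it, and it is exactly the kind of hypothesis that is easy to overlook; the paper's route avoids it. A second, minor, point: your claim that \ref{prop:stagejsit4} ``follows from $V(H_j)\cap V_i=\emptyset$ for $i<j-1$'' handles $i<j-1$ but silently skips the case $i=j-1$, where one also needs $V(S_{j-1})\cap V_j=\emptyset$ (which follows from \ref{prop:stagejsit2}) to conclude $V(S_{j-1})\cap V(S_j)=\{v_{j-1}\}$. Both issues are fixable in a line or two, so I would call your proof correct but taking a slightly heavier route at $j=\ell$ than the paper, which buys you nothing that the final deduction actually uses.
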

\begin{proof}[Proof of Claim~\ref{clm:improvesituation}] 
Set $S_0$ to be the tree containing only the vertex $v_0$. Fix $j\in[\ell]$ and let $X_j=V_{j-1}\setminus V(S_{j-1})$.
Now, as $X_j\cup\{v_{j-1}\}\subset V_{j-1}$, and $V_0\cup V_1\cup \ldots \cup V_{\ell}$ is an $(\mathbf{n},2\lambda,d)$-vortex partition, we have the following by \ref{prop:vorpart3}.
\stepcounter{propcounter}
\begin{enumerate}[label = \textbf{\Alph{propcounter}}]
\item $I(X_j\cup \{v_{j-1}\})$ is $(d,\lambda n_{j-1})$-extendable in $G[X_j\cup \{v_{j-1}\}\cup V_j]$.\label{prop:clmpf1}
\end{enumerate}
Then, if $j\leq \ell-1$,
\begin{align}
|X_j\cup &\{v_{j-1}\}\cup  V_{j}|=1+|(V_{j-1}\cup V_{j})\setminus V(S_{j-1})|\overset{\ref{prop:stagejsit2}}{=}1+\left|\left(\cup_{i=0}^{j}V_i\right)\setminus \left(\cup_{i=0}^{j-1}V(S_{i})\right)\right|\nonumber\\
&=1+|G'|-|\cup_{i=j+1}^\ell V_i|-\Big(|T|-\sum_{i=j}^\ell n'_i\Big)\nonumber\\
&\overset{\ref{prop:vorpart1}}{\geq} 1+\left(n-\floor{\frac m4}\right)-\sum_{i=j+1}^\ell(1+\lambda)n_i-|T|+\sum_{i=j}^\ell n'_i\nonumber\\
&\overset{\eqref{eqn:nisum}}{\geq} \left(n+1-|T|-\floor{\frac m4}\right)-\sum_{i=j+1}^\ell n_i-2\lambda n_{j+1}+\sum_{i=j}^\ell n'_i\nonumber\\
&\overset{\eqref{eqn:nisum}}{\geq} \left(m-\floor{\frac m4}\right)-\Big(\floor{\gamma_2n'_{j+1}}+\sum_{i=j+2}^\ell n_i'+m-1\Big)-2\lambda n'_{j+1}+\sum_{i=j}^\ell n'_i\nonumber
\end{align}
\begin{align}
&\geq n'_j+\frac{n'_{j+1}}2-\frac m2\geq\left(1+\frac{\gamma_1}{2}\right)n'_j-\frac m2\hspace{4.4cm} \textcolor{white}{.}\nonumber\\
&\geq\left(1+\frac{\gamma_1}{2}\right)|T_j|-m\geq |T_j|+20dm,\label{prop:clmpf2}
\end{align}
where the last inequality follows from $\gamma_1|T_j|\geq\gamma_1\cdot \gamma Km/3\Delta\geq20dm$.
Furthermore,

\begin{align}
|X_j|&=|V_{j-1}\setminus V(S_{j-1})|\overset{\ref{prop:stagejsit2}}{=}\left|\left(\cup_{i=0}^{j-1}V_i\right)\setminus \left(\cup_{i=0}^{j-1}V(S_{i})\right)\right|\nonumber\\
&=|G'|-|\cup_{i=j}^\ell V_i|-(|T|-\sum_{i=j}^\ell n'_i)\nonumber\\
&\leq n-\sum_{i=j}^\ell (1-\lambda)n_i-|T|+\sum_{i=j}^\ell n'_i\nonumber\\
&\leq m+2\lambda n_j-\sum_{i=j}^\ell n_i +\sum_{i=j}^\ell n'_i\nonumber\\
&\leq n'_j+2\lambda n'_j-\floor{\gamma_2n'_j}+1\leq\left(1-\frac{\gamma_2}2\right)n'_j\nonumber\\
&\leq\left(1-\frac{\gamma_2}2\right)|T_j|=(1-\gamma)|T_j|. \label{prop:clmpf3}
\end{align}
Finally, we have $|T_j|\geq n'_j\geq\gamma_1n'_{j-1}\geq\gamma_1n_{j-1}\geq2d^2\lambda n_{j-1}$ and $G[X_j\cup \{v_{j-1}\}\cup V_j]$ is $(\lambda n_{j-1})$-joined by \ref{prop:vorpart2}. Therefore, by \ref{prop:clmpf1}, \eqref{prop:clmpf2}, \eqref{prop:clmpf3} and
Lemma~\ref{lemma:iterativeembed}, we can find a copy $S_j$ of $T_j$ in $G[X_j\cup \{v_{j-1}\}\cup V_j]$ with $t_{j-1}$ copied to $v_{j-1}$, $t_j$ copied into $V_j$, and $X_j\subset V(S_j)$. Then, letting $v_j$ to be the copy of $t_j$, we have a stage $j$ situation.

Suppose then that $j=\ell$. From \ref{prop:clmpf1}, we have that $I(X_\ell\cup\{v_{\ell-1}\})$ is $(d,\lambda n_{\ell-1})$-extendable in $G[X_\ell\cup \{v_{\ell-1}\}\cup V_\ell]$, and hence $(d,\lambda m)$-extendable.
As $X_\ell\cup \{v_{\ell-1}\}\cup V_\ell\subset V_{\ell-1}\cup V_{\ell}$, we have that $G[X_\ell\cup \{v_{\ell-1}\}\cup V_\ell]$ is $(\lambda m)$-joined. Furthermore,
\begin{align*}
|X_\ell\cup \{v_{\ell-1}\}\cup V_\ell|&
=1+|(V_{\ell-1}\cup V_{\ell})\setminus V(S_{\ell-1})|\overset{\ref{prop:stagejsit2}}{=}1+\left|\left(\cup_{i=0}^{\ell}V_i\right)\setminus \left(\cup_{i=0}^{\ell-1}V(S_{i})\right)\right|\nonumber\\
&\geq 1+|G'|-(|T|-|T_\ell|+1)\geq n-\floor{\frac m4}-|T|+|T_\ell|.\\
&\geq |T_\ell|+\frac m2\geq |T_\ell|+10\lambda d m.\label{eqn:penum}
\end{align*}
Therefore, by Corollary~\ref{corollary:extendable:embedding}, there is a copy $S_\ell$ of $T_\ell$ in $G[X_\ell\cup\{v_{\ell-1}\}\cup V_\ell]$ in which $t_{\ell-1}$ is copied to $v_{\ell-1}$. Let $v_\ell$ be the copy of $t_\ell$ and note that we have a stage $\ell$ situation, as required.
This completes the proof of the claim and hence the theorem.
\renewcommand{\qedsymbol}{$\boxdot$}
\qedhere
\renewcommand{\qedsymbol}{$\square$}
\qedsymbol
\end{proof}
\renewcommand{\qedsymbol}{}
\end{proof}
\renewcommand{\qedsymbol}{$\square$}


\section{Ramsey goodness of bounded degree trees}\label{sec:largerk}
In this section, we prove Theorem~\ref{theorem:main:1} by induction from Theorem~\ref{corollary:k=2}.
To embed a tree $T$ in a graph $G$ in the setting of Theorem~\ref{theorem:main:1}, we will again show we can assume some extra expansion condition, before dividing into 3 (overlapping) cases. Roughly, these cases are the following.
\begin{enumerate}[label = \alph{enumi})]
\item $T$ has linearly many leaves.
\item $G$ is not well connected.
\item $G$ is well connected and $T$ does not have linearly many leaves.
\end{enumerate}
In each case the embedding is different, and we sketch these at the start of Sections~\ref{sec:manyleaves}--\ref{sec:fewleaveswellconnected} respectively, before combining them to prove Theorem~\ref{theorem:main:1} in Section~\ref{sec:finalproof}.


\subsection{Trees with linearly many leaves}\label{sec:manyleaves}

We first show that, in the setting of Theorem~\ref{theorem:main:1}, if $T$ has linearly many leaves and $G$ satisfies a simple expansion condition (\itref{prop:manyleaves2}), then $G$ contains a copy of $T$.
This was shown by Balla, Pokrovskiy, and Sudakov~\cite{balla2018}, but for completion we include the full proof, following their method while using the extendability framework.

To embed $T$, we remove a large set of leaves, and embed the resulting tree into the graph in an extendable fashion. We then show that the relevant extendability conditions imply that a version of Hall's matching criterion holds in the appropriate subgraph to allow unused vertices to be attached to the embedded tree as leaves in order to create a copy of the original tree.

\begin{restatable}{lemma}{lemmalinearlymanyleaves}\label{lemma:linearlymanyleaves}
Let $1/n\ll \mu\ll 1/ \Delta$. Let $G$ be a graph with at least $n$ vertices in which the following holds.
\stepcounter{propcounter}
\begin{enumerate}[label = \textbf{\emph{\Alph{propcounter}}}]
\item Every set $U\subset V(G)$ with $|U|=\mu n$ satisfies $|U\cup N_G(U)|\geq n$.\label{prop:manyleaves2}
\end{enumerate}
Then, $G$ contains a copy of every $n$-vertex tree $T$ with $\Delta(T)\le \Delta$ and at least $10\Delta^2\mu n$ leaves.
\end{restatable}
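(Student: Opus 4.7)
My plan is to follow the outline suggested in the text: strip a large well-chosen set of leaves off $T$, embed the reduced tree extendably, and re-attach the leaves by a perfect matching produced via Hall's theorem.

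To start, I would select a set $L$ of leaves of $T$ with $|L|=10\Delta\mu n$ and pairwise distinct parents in $T$; this is possible because $T$ has at least $10\Delta^2\mu n$ leaves and each internal vertex is adjacent to at most $\Delta$ of them. Set $T'=T-L$, so $|T'|=n-10\Delta\mu n$. Hypothesis~\ref{prop:manyleaves2} is equivalent to $G$ being $(\mu n,\,|V(G)|-n+1)$-joined, so since $\mu\ll 1/\Delta$ and $|V(G)|\geq n$, Proposition~\ref{prop:expander:subgraphnew} (applied with $m=\mu n$, $n_0=|V(G)|-n+1$ and $d=2\Delta$) yields $W\subset V(G)$ with $|W|<\mu n$ such that $G'=G-W$ is a $(2\Delta,\mu n)$-expander (and is still appropriately joined).

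Fix any $v\in V(G')$. By Proposition~\ref{prop:weakerextendability}, $R=I(\{v\})$ is $(2\Delta,\mu n)$-extendable in $G'$; and the $|L|=10\Delta\mu n$ spare vertices comfortably exceed the $O(\Delta\mu n)$ slack required by Corollary~\ref{corollary:extendable:embedding}, which I apply to grow $R$ into a $(2\Delta,\mu n)$-extendable copy $S'$ of $T'$ in $G'$. Writing $A\subset V(S')$ for the image of the set of parents in $T$ of the leaves in $L$ (so $|A|=|L|$) and $B=V(G)\setminus V(S')$ (of size $|V(G)|-n+|L|\geq|L|$), adding a perfect matching from $A$ to $B$ in $G$ to $S'$ produces a copy of $T$. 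It therefore remains to verify Hall's condition on $G[A,B]$.

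For a nonempty $U\subseteq A$, if $|U|\le\mu n$ then the extendability of $S'$ in $G'$, combined with $\Delta(S')\le\Delta$, gives via~\eqref{def:extendability} that $|N'_{G'}(U)\setminus V(S')|\ge(2\Delta-1)|U|-(\Delta-1)|U|=\Delta|U|\ge|U|$, and this set lies in $V(G')\setminus V(S')\subset B$. If $|U|>\mu n$, then \ref{prop:manyleaves2} (which extends to all sets of size at least $\mu n$ by monotonicity) yields $|U\cup N_G(U)|\ge n$ and hence $|N_G(U)\setminus U|\ge n-|U|$; subtracting the at most $|V(S')\setminus U|=(n-|L|)-|U|$ such non-$U$ neighbors lying in $V(S')$ leaves at least $|L|=|A|\ge|U|$ of them in $B$. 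Either way, $|N_G(U,B)|\ge|U|$, Hall's condition holds, and $G$ contains a copy of $T$. The only mildly delicate point is keeping the parameter bookkeeping consistent across Proposition~\ref{prop:expander:subgraphnew}, Corollary~\ref{corollary:extendable:embedding}, and the two Hall cases, but this is routine given $\mu\ll 1/\Delta$.
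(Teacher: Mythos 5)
Your approach is the same as the paper's: remove a set of leaves $L$ with distinct parents, pass to a $(d,\mu n)$-expander subgraph via Proposition~\ref{prop:expander:subgraphnew}, grow a $(d,\mu n)$-extendable copy of $T-L$ via Corollary~\ref{corollary:extendable:embedding}, and reattach $L$ by a Hall matching split into a small-set case (extendability) and a large-set case (hypothesis~\ref{prop:manyleaves2}). The decomposition of $T$, the two Hall cases, and the bookkeeping are all in line with what the paper does. However, your choice of $d=2\Delta$ in Proposition~\ref{prop:expander:subgraphnew} is slightly too tight at the point where you assert that $I(\{v\})$ is $(2\Delta,\mu n)$-extendable. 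Proposition~\ref{prop:weakerextendability} requires, for every $U$ with $1\le|U|\le 2\mu n$, that $|N_{G'}(U)\setminus\{v\}|\ge 2\Delta|U|$. The $(2\Delta,\mu n)$-expander property only gives $|N_{G'}(U)|\ge 2\Delta|U|$, so for $|U|=1$ with $v\in N_{G'}(U)$ you are one vertex short; and it gives nothing at all for $\mu n<|U|\le 2\mu n$, which is part of the quantifier in Proposition~\ref{prop:weakerextendability}. You gesture at the joined-ness of $G'$ parenthetically but don't actually deploy it. The paper avoids both issues by taking a $(4\Delta,\mu n)$-expander from Proposition~\ref{prop:expander:subgraphnew} (the slack is still there since $\mu\ll1/\Delta$) so that $4\Delta|U|\ge 2\Delta|U|+1$, and by invoking \ref{prop:manyleaves2} directly to handle $\mu n<|U|\le 2\mu n$. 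Both fixes are purely cosmetic; with them in place your argument is complete.
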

\begin{proof} Let $T$ be an $n$-vertex tree with $\Delta(T)\le \Delta$ and at least $10\Delta^2\mu n$ leaves.
Let $d=2\Delta$, $m=\mu n$ and $n_0=|G|-n+1$, so that  $G$ is $(m,n_0)$-joined by \itref{prop:manyleaves2}, and
\begin{equation}\label{eqn:Gsize}
|G|=n_0+n-1\geq n_0+(1-10\Delta\mu)n+(4d+2)m.
\end{equation}
Using Proposition~\ref{prop:expander:subgraphnew}, we can find $W\subset V(G)$ with $|W|<m$ such that $G'=G-W$ is a $(2d,m)$-expander.

Arbitrarily, pick $v\in V(G')$. Note that, for each $U\subset V(G')$ with $1\leq |U|\leq m$, as $G'$ is a $(2d,m)$-expander, we have $|N_{G'}(U)|\geq 2d|U|\geq 1+d|U|$. By \itref{prop:manyleaves2}, if $U\subset V(G')$ with $m\leq |U|\leq 2m$, we also have $|N_{G'}(U)|\geq n-2m\geq 1+2dm\geq 1+d|U|$.
Thus, by Proposition~\ref{prop:weakerextendability}, $I(\{v\})$ is $(d,m)$-extendable in $G'$.

As $T$ has at least $10\Delta^2\mu n$ leaves, we can find a set of leaves $L$ such that $|L|=10\Delta \mu n$ and no pair of leaves in $L$ has a common neighbour in $T$. Letting $T'=T-L$, we will now embed $T'$ into $G'$. Note that
\[
1+|T'|\leq 1+n-10\Delta \mu n\overset{\eqref{eqn:Gsize}}{\leq} |G'|-(2d+2)m-n_0.
\]
Then, as $G$, and thus $G'$, is $(m,n_0)$-joined, by Corollary~\ref{corollary:extendable:embedding} (applied with an arbitrary $t\in V(T')$), there is a $(d,m)$-extendable subgraph $S'$ of $G'$ which is a copy of $T'$.

Let $A\subset V(S')$ be the copy in $S'$ of the set of parents in $T$ of the leaves in $L$, and let $B=V(G)\setminus V(S')$. Observe that to make $S'$ into a copy of $T$, it is sufficient to find a matching in $G$ from $A$ to $B$ and add this to $S'$. We will find such a matching by showing that the appropriate Hall's matching criterion holds.

If $U\subset A$, with $|U|\le m$, then, as $S'$ is $(d,m)$-extendable in $G'$, by \eqref{def:extendability} we have
\[
|N_G(U,B)|\ge |N_{G'}(U)\setminus V(S')|=|N'_{G'}(U)\setminus V(S')|{\ge} (d-1)|U|-(\Delta-1)|U|\ge |U|.
\]
On the other hand, if $U\subset A$ with $|U|\geq m$, then
\[
|N_G(U,B)|\geq |U\cup N_G(U)|-|T'|\overset{\itref{prop:manyleaves2}}{\geq} n-(n-|L|)= |A|\geq |U|.
\]
Therefore, for each $U\subset A$ we have $|N_G(U,B)|\geq |U|$, and thus, as Hall's matching criterion holds, $G$ has a matching from $A$ to $B$, as required.
\end{proof}


\subsection{Non-well-connected graphs}\label{sec:nonwellconnected}


For the next case in the induction step of Theorem~\ref{theorem:main:1}, which we prove as Lemma~\ref{lemma:nonwellconnected} below, we will use properties that will follow from induction (see~\itref{prop:nonwell2} below) and our graph $G$ will not be well-connected, in the sense that it will have a vertex partition $V_0\cup V_1\cup V_2$ in which $V_0$ contains at most a small linear (in $n$) number of vertices and there are no edges in $G$ between $V_1$ and $V_2$, which are both large sets (see \itref{prop:nonwell3}). We will start by, for each $i\in [2]$, finding a large subset $V'_i\subset V_i$, so that $G[V_i']$ has an expansion condition, for otherwise we will be done by the induction properties.
Neither of these properties will be necessarily strong enough to embed $T$. However, if a vertex has $\Delta$ neighbours in both $V_1'$ and $V_2'$ (this is Case I in the proof of Lemma~\ref{lemma:nonwellconnected}), then this will allow us to embed part of the tree in $G[V_1']$ and part of the tree in $G[V_2']$ in such a way that they can be connected through this vertex to get a copy of $T$. Thus, we can then assume that no such vertex exists (Case II in the proof of Lemma~\ref{lemma:nonwellconnected}). This will allow us to partition $V(G)\setminus (V_1'\cup V_2')$ as $U_1\cup U_2$ so that there are very few edges between $V_1'$ and $U_2$ and very few edges between $V_2'$ and $U_1$. Assuming that $G$ contains no copy of $T$, we then apply induction to find complete partite graphs in $G^c[V_1'\cup U_1]$ and $G^c[V_2'\cup U_2]$ which we can combine (with the deletion of some vertices from the larger parts) to get a complete $k$-partite graph in $G^c$ with the class sizes we need.

\begin{restatable}{lemma}{lemmanonwellconnected}\label{lemma:nonwellconnected} Let $k\geq 3$ and $1/n\ll \lambda\ll \mu\ll 1/\Delta$. Let $m\leq \lambda n$ and  let $G$ be a graph with $(k-1)(n-1)+m$ vertices. Let $T$ be an $n$-vertex tree with $\Delta(T)\leq \Delta$. Suppose the following hold.
\stepcounter{propcounter}
\begin{enumerate}[label = \upshape\textbf{\Alph{propcounter}\arabic{enumi}}]
\item For any $2\leq k'\leq k-1$ and $m'\leq \mu n$, $R(T,K_{\mu n}^{k'-1}\times K_{m'}^c)=(k'-1)(n-1)+m'$.\label{prop:nonwell2}
\item There is a partition $V(G)=V_0\cup V_1\cup V_2$ such that $e_G(V_1,V_2)=0$, $|V_1|,|V_2|\geq m$ and $|V_0|\leq \lambda n$.\label{prop:nonwell3}
\end{enumerate}
Then, $G$ contains a copy of $T$ or $G^c$ contains a copy of $K_{\lambda n}^{k-1}\times K_m^c$.
\end{restatable}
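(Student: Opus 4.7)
The plan is to either embed $T$ into $G$ using a ``hub'' vertex connecting the two weakly-connected parts of $G$, or to apply the induction hypothesis~\ref{prop:nonwell2} to each part separately and combine the resulting partite structures inside $G^c$.

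First, we prepare each side by ensuring a useful expansion condition. For each $i \in \{1,2\}$: if $G[V_i]$ fails to be $(m, \mu n)$-joined, then $G^c[V_i]$ already contains a copy of $K_{m,\mu n}$, which provides two pairwise-anticomplete classes inside $V_i$; combining this with a partite structure obtained by applying \ref{prop:nonwell2} with $k' = k-2$ on a suitable subset of $V_{3-i}$ (or, for $k = 3$, taking $\mu n$ vertices from $V_{3-i}$ as a single trivial class) and using $e_G(V_1, V_2) = 0$ to ensure cross-side anticompleteness already yields the required $K^{k-1}_{\lambda n} \times K^c_m$ in $G^c$. Otherwise, both $G[V_i]$ are $(m, \mu n)$-joined, and by Proposition~\ref{prop:expander:subgraphnew} we extract $V_i' \subseteq V_i$ with $|V_i \setminus V_i'| < m$ such that $G[V_i']$ is a $(D, m)$-expander for a constant $D$ with $1/D \ll 1/\Delta$. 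We then split according to whether a ``hub'' vertex exists; any $v$ with $d_G(v, V_1') \geq \Delta$ and $d_G(v, V_2') \geq \Delta$ must lie in $V_0$ since $e_G(V_1, V_2) = 0$.

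In Case I (a hub vertex $v$ exists), we embed $T$ directly. Using Proposition~\ref{prop:splittree} we decompose $T$ at a chosen vertex $t^*$ into edge-disjoint subtrees $T_1, T_2$ sharing only $t^*$, with $|T_i| \leq |V_i'|$ leaving enough slack for the embedding tools. We map $t^* \mapsto v$. For each $i$, the subgraph $I(\{v\})$ is $(d, m)$-extendable in $G[V_i' \cup \{v\}]$ by Proposition~\ref{prop:weakerextendability}, since $G[V_i']$ is a $(D, m)$-expander and $d_G(v, V_i') \geq \Delta$. Corollary~\ref{corollary:extendable:embedding} then extends $I(\{v\})$ to a copy of $T_i$ in $G[V_i' \cup \{v\}]$ with $t^* \mapsto v$. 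Gluing the two embeddings at $v$ yields a copy of $T$ in $G$.

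In Case II (no hub vertex), each $v \in V_0$ satisfies $d_G(v, V_{i(v)}') < \Delta$ for some $i(v) \in \{1,2\}$. Assume WLOG $|V_1| \leq |V_2|$. We choose integers $k_1, k_2 \geq 1$ with $k_1 + k_2 = k$ and parameters $m_1', m_2' \leq \mu n$ such that, with $|S_i| := (k_i - 1)(n-1) + m_i'$, we have $|S_i| \leq |V_i|$, one of $m_1', m_2'$ is at least $\lambda n$, and the other is at least $m$. Such a choice exists by the size bound $|V_1| + |V_2| \geq (k-1)(n-1) + m - \lambda n$ combined with $|V_i| \geq m$; for the boundary configuration in which $k = 3$ and both $|V_i|$ are near $n - 1$, we use that $m \leq \lambda n$ allows us instead to seek classes all of size $\geq \lambda n$ (since a class of size $\lambda n$ contains $K^c_m$), possibly padding a side with $V_0$-vertices chosen from $\{v \in V_0 : d_G(v, V_i') < \Delta\}$ whose cross-degree to the opposite side is at most $\Delta + m$, followed by controlled trimming. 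Take $S_i$ of the chosen size (with padding as needed) and apply \ref{prop:nonwell2} with $k' = k_i$ (trivial when $k_i = 1$): either some $G[S_i]$ contains $T$ (done), or each $G^c[S_i]$ contains $K^{k_i - 1}_{\mu n} \times K^c_{m_i'}$. Combining the two partite structures gives $k$ pairwise disjoint sets that are pairwise anticomplete in $G$—within each side by the partite structure and across sides because $e_G(V_1, V_2) = 0$, with any padding edges handled by trimming—giving the desired $K^{k-1}_{\lambda n} \times K^c_m$ in $G^c$.

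The main obstacle is the parameter choice in Case II under the boundary configuration for $k = 3$ when both $|V_i|$ are within $O(\lambda n)$ of $n - 1$: there the padding with $V_0$-vertices of bounded cross-degree, together with the slack afforded by $m \leq \lambda n$, must be balanced carefully so that the induction application on each side is valid and the combined class sizes meet both the lower-bound requirement on the smallest class and the anticompleteness requirement between sides.
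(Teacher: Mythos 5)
Your high-level plan (hub-vertex embedding vs.\ inductive partite combination) matches the paper, but there are three genuine gaps.

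First, you never derive the global expansion property that the paper establishes up front: that every $m$-set $U\subset V(G)$ has $|N_G(U)|\geq(1-2\lambda)n$, which follows from the contradiction hypothesis plus~\ref{prop:nonwell2} applied to $G-U-N_G(U)$ with $k'=k-1$ and $m'=\lambda n$. This is what guarantees $|V_1|,|V_2|\geq(1-3\lambda)n+m$ (so both sides are close to $n$) and gives the strong $(m,n_i)$-joinedness of $G[V_i]$ used throughout. Your substitute—testing whether each $G[V_i]$ is $(m,\mu n)$-joined—is much weaker because $|V_i|$ is a priori only $\geq m$, and the fallback when $G[V_i]$ fails to be joined (applying~\ref{prop:nonwell2} with $k'=k-2$ inside $V_{3-i}$) requires $|V_{3-i}|\gtrsim(k-3)(n-1)$, which you have no way to guarantee without the very expansion you are trying to sidestep.

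Second, in Case I, mapping $t^*\mapsto v$ and claiming $I(\{v\})$ is $(d,m)$-extendable in $G[V_i'\cup\{v\}]$ does not follow from $d_G(v,V_i')\geq\Delta$: applying Proposition~\ref{prop:weakerextendability} with $U=\{v\}$ needs $|N(v,V_i')|\geq d\geq 2\Delta$, and $\Delta$ neighbours are not enough. The paper avoids this by never placing $v$ into the expander: it picks a neighbour $v_0\in N(v,V_1')$ and $r\leq\Delta$ distinct neighbours in $V_2'$, embeds $T_1-t$ rooted at $v_0$ in $G[V_1']$ and the components of $T_2-t$ rooted at $v_1,\dots,v_r$ in $G[V_2']$, then joins them through $v$.

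Third, your Case II parametrization $k_1+k_2=k$ with one $m_i'\geq\lambda n$ and the other $\geq m$ misses the configuration where $|V_1'\cup U_1|=k_1(n-1)+m_1$, $|V_2'\cup U_2|=k_2(n-1)+m_2$ with $k_1+k_2=k-1$ and $m_1+m_2=m$ (the paper's Case II.2). In that regime one must apply the induction on both sides to produce $k+1$ classes and then \emph{merge} the two residual classes of sizes $m_1$ and $m_2$ into a single class of size $m$, using $e_G(V_1,V_2)=0$ (and trimming by small neighbourhoods through $U_1,U_2$). This is not a $k=3$ edge case—it occurs for every $k\geq3$—and there is no choice of $(k_1,k_2)$ summing to $k$ with one $m_i'\geq\lambda n$ and the other $\geq m$ that fits inside the two sides; padding with $V_0$-vertices cannot rescue it either, since $V_0$ may be empty. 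You need the merging argument, which your framework does not accommodate.
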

\begin{proof}
Assume for a contradiction that $G$ contains no copy of $T$ and $G^c$ contains no copy of $K_{\lambda n}^{k-1}\times K_m^c$. We start by showing that we have the following property.

\begin{enumerate}[label =\textbf{{\Alph{propcounter}\arabic{enumi}}}]\addtocounter{enumi}{2}
\item For every $U\subset V(G)$ with $|U|=m$, we have $|N_G(U)|\geq (1-2\lambda)n$.\label{prop:nonwell1}
\end{enumerate}

Indeed, if there is some set $U\subset V(G)$ with $|U|=m$ with $|N_G(U)|<(1-2\lambda)n$, then $|G-N_G(U)-U|>(k-2)(n-1)+\lambda n$, and thus, by \itref{prop:nonwell2}, as $G-N_G(U)-U$ contains no copy of $T$, its complement contains a copy of $K_{\lambda n}^{k-1}$. Adding $U$ as a vertex class, we obtain a copy of $K_{\lambda n}^{k-1}\times K_m^c$ in $G^c$, contradiction. Thus, \ref{prop:nonwell1} holds.

Now, let $d=4\Delta$ and consider the partition $V(G)=V_0\cup V_1\cup V_2$ given by \ref{prop:nonwell3}. For each $i\in [2]$ and any $U\subset V_i$ with size $m$, we have $N_G(U)\subset V_i\cup V_0$, so that, by \ref{prop:nonwell1}, $|N_G(U,V_i)|\geq (1-2\lambda)n-|V_0|\geq (1-3\lambda)n$.
In particular, $|V_i|\geq m+(1-3\lambda)n$. For each $i\in [2]$, then, letting $n_i=|V_i|-(1-3\lambda)n$, we have that $G[V_i]$ is $(m,n_i)$-joined and, as $\lambda\ll 1/\Delta$ and $m\leq \lambda n$,
\begin{equation}\label{eqn:Visize}
|V_i|=n_i+(1-3\lambda)n\geq n_i+\left(1-\frac{1}{4\Delta}\right)n+1+(4d+2)m+2m.
\end{equation}
In particular, $|V_i|\geq n_i+(4d+2)m$, so, by Proposition~\ref{prop:expander:subgraphnew}, there is some $W_i\subset V_i$ with $|W_i|<m$ such that $G[V_i]-W_i$ is a $(2d,m)$-expander.
For each $i\in [2]$, let $V'_i=V_i\setminus W_i$.

We now consider 2 cases, depending on whether there is some $v\in V(G)\setminus (V_1'\cup V_2')$ with at least $\Delta$ neighbours both in $V_1'$ and in $V_2'$ (Case I) or not (Case II). In Case I, we will show that $G$ contains a copy of $T$, and in Case II, we will show that $G^c$ contains a copy of $K_{\lambda n}^{k-1}\times K_m^c$, a contradiction either way.

\medskip

\noindent\textbf{Case I.} Suppose then that there is some vertex $v\in V(G)\setminus (V_1'\cup V_2')$ with at least $\Delta$ neighbours both in $V_1'$ and in $V_2'$. Using Proposition~\ref{prop:splittree}, we can find two subtrees $T_1$ and $T_2$ of $T$ that share exactly one vertex $t$ that is a leaf in $T_1$, such that $E(T)=E(T_1)\cup E(T_2)$ and $n/4\Delta\leq|T_2|\leq n/2$. Let $t_0$ be the unique vertex in $T_1$ adjacent to $t$ and let $t_1,\ldots,t_r$ be the neighbours of $t$ in $T_2$. For each $i\in [r]$, let $T'_i$ be the tree in $T_2-t$ containing $t_i$. Using that $v$ has at least $\Delta\geq r$ neighbours in $V_1'$ and $V_2'$, we can find a neighbour $v_0$ of $v$ in $V_1'$ and $r$ distinct neighbours $v_1,\ldots,v_r$ of $v$ in $V'_2$.

Now, as $G[V_1]-W_1=G[V_1']$ is an $(m,n_1)$-joined $(2d,m)$-expander, we have that $I(\{v_0\})$ is $(d,m)$-extendable in $G[V_1']$. Futhermore, as $|T_1|=n-|T_2|+1\leq(1-1/4\Delta)n+1$, by \eqref{eqn:Visize}, we have $|V_1'|\geq |T_1|+(2d+2)m+n_1$.
Thus, by Corollary~\ref{corollary:extendable:embedding} there is a copy $S_1$ of $T_1-t$ in $G[V_1']$ in which $t_0$ is copied to $v_0$.

Similarly, as $G[V_2]-W_1=G[V_2']$ is an $(m,n_2)$-joined $(2d,m)$-expander, we have that $I(\{v_1,\ldots,v_r\})$ is $(d,m)$-extendable in $G[V_2']$ as $r\leq \Delta= d/4$. Futhermore, as $\sum_{i\in [r]}|T'_i|=|T_2|-1\leq n/2$,
by \eqref{eqn:Visize}, we have $|V_2'|\geq r+\sum_{i\in [r]}|T'_i|+(2d+2)m+n_2$. Thus, by induction and Corollary~\ref{corollary:extendable:embedding}, for each $j\in [r]$, there are vertex-disjoint copies $S'_1,\ldots,S'_j$ of $T'_1,\ldots,T'_j$, respectively, such that $t_i$ is copied to $v_i$ for each $i\in [j]$, and $I(\{v_1,\ldots,v_{r}\})\cup(\cup_{i\in [j]}S_i)$ is $(d,m)$-extendable in $G[V_2']$.
Note that at the end of the induction, $S_2=\left(\cup_{i=1}^rS'_i\right)$ is a copy of $T_2-t$ in $G[V'_2]$. Hence, $S_1\cup S_2$ together with the vertex $v$ and edges $vv_0,\ldots,vv_r$ form a copy of $T$ in $G$, contradiction. 

\medskip

\noindent\textbf{Case II.} 
Suppose, then, that we can partition $V(G)\setminus (V_1'\cup V_2')$ as $U_1\cup U_2$ so that every vertex in $U_1$ has fewer than $\Delta$ neighbours in $V_2'$ and every vertex in $U_2$ has fewer than $\Delta$ neighbours in $V_1'$.
For each $i\in [2]$, let $k_i$ and $m_i$ be such that $|V_i'\cup U_i|=k_i(n-1)+m_i$ and $0\leq m_i<n-1$. Switching the labels if necessary, we can assume that $m_1\geq m_2$. Now, as $V(G)=V_1'\cup U_1\cup V_2'\cup U_2$ is a partition,
\[
(k-1)(n-1)+m=(k_1+k_2)(n-1)+(m_1+m_2),
\]
and thus, we have that the following hold.
\begin{itemize}
\item If $m_1+m_2<n-1$ then $k-1=k_1+k_2$ and $m=m_1+m_2$.
\item If $m_2<m$, then as $(k-1)(n-1)+(m-m_2)=(k_1+k_2)(n-1)+m_1$ and $m_1<(n-1)$, we have $k-1=k_1+k_2$ and $m=m_1+m_2$.
\item $k_1+k_2\geq k-2$, so if $k_1+k_2\neq k-2$, then $k-1=k_1+k_2$, and hence $m=m_1+m_2$.
\end{itemize}
Thus, we either have $m_1\geq (n-1)/2$, $m_2\geq m$ and $k_1+k_2=k-2$ (Case II.1) or $k-1=k_1+k_2$ and $m=m_1+m_2$ (Case II.2).
Futhermore, as $|V_i'|\geq|V_i|-m\geq (1-3\lambda )n$ for each $i\in [2]$, we have that $k_1,k_2<k-1$.

\smallskip

\noindent\textbf{Case II.1:} $m_1\geq (n-1)/2$, $m_2\geq m$ and $k_1+k_2=k-2$. As $k_1<k-1$ and $m_1\geq n/3$, by \itref{prop:nonwell2}, we have that $V_1'\cup U_1$ contains disjoint sets $Y_1,\ldots,Y_{k_1+1}$ with $|Y_i|=\mu n$ for each $i\in [k_1+1]$, and there are no edges in $G$ between any pair $Y_i$ and $Y_j$ for each $1\leq i<j\leq k_1+1$.
Furthermore, as $k_2<k-1$ and $m_2\geq m$, by \itref{prop:nonwell2}, we have that $V_2'\cup U_2$ contains disjoint sets $Z_1,\ldots,Z_{k_2+1}$ with $|Z_i|=\mu n$ for each $i\in [k_2]$ and $|Z_{k_2+1}|=m$ so that there are no edges in $G$ between any pair $Z_i$ and $Z_j$ for $1\leq i<j\leq k_2+1$.

Note that
\begin{equation}\label{eqn:Uetc}
|U_1\cup (N(Z_{k_2+1})\cap V_1')|\leq |V_0|+|W_1|+|W_2|+(\Delta-1)|Z_{k_2+1}|\leq \lambda n+2m+\Delta m\leq \mu n/2.
\end{equation}
Hence, we can find $Y'_i\subset Y_i\setminus (U_1\cup (N(Z_{k_2+1})\cap V_1'))$ with size $\lambda n$ for each $i\in [k_1+1]$. Similarly, since $|U_2|\leq|V_0|+|W_1|+|W_2|\leq\lambda n+2m\leq \mu n/2$, we can find $Z_i'\subset Z_i\setminus U_2$ having size $\lambda n$ for each $i\in [k_2]$. Let $Z_{k_2+1}'=Z_{k_2+1}$, and observe that there no edges in $G$ between any of the sets
\[
Y_1',Y_2',\ldots, Y_{k_1+1}',Z'_1,Z_2',\ldots,Z'_{k_2+1},
\]
and these are $k_1+k_2+1=k-1$ sets with size $\lambda n$ and 1 set of size $m$, so that $G^c$ contains a copy of $K_{\lambda n}^{k-1}\times K_m^c$, contradiction.

\smallskip

\noindent\textbf{Case II.2:} $m_1+m_2=m$ and $k_1+k_2=k-1$. As $k_1<k-1$, by \itref{prop:nonwell2}, we have that $V_1'\cup U_1$ contains disjoint sets $Y_1,\ldots,Y_{k_1+1}$ with $|Y_i|=\mu n$ for each $i\in [k_1]$, and $|Y_{k_1+1}|=m_1$, such that there are no edges in $G$ between any pair $Y_i$ and $Y_j$ for each $1\leq i<j\leq k_1+1$.
Similarly, by \itref{prop:nonwell2}, we have that $V_2'\cup U_2$ contains disjoint sets $Z_1,\ldots,Z_{k_2+1}$ with $|Z_i|=\mu n$ for each $i\in [k_2]$ and $|Z_{k_2+1}|=m_2$, such that there are no edges in $G$ between any pair $Z_i$ and $Z_j$ for  each $1\leq i<j\leq k_2+1$.
For each $i\in [k_1]$, using a similar calculation to \eqref{eqn:Uetc}, let $Y'_i\subset Y_i\setminus (U_1\cup (N(Z_{k_2+1})\cap V_1'))$ have size $\lambda n$ and let $Y'_{k_1+1}=Y_{k_1+1}$. For each $i\in [k_2]$, again using a similar calculation to \eqref{eqn:Uetc}, let $Z_i'\subset Z_i\setminus (U_2\cup (N(Y_{k_1+1})\cap V_2'))$ have size $\lambda n$, and let $Z_{k_2+1}'=Z_{k_2+1}$. Then, observe that there no edges in $G$ between any of the sets
\[
Y_1',Y_2',\ldots, Y_{k_1}',Z'_1,Z_2',\ldots,Z'_{k_2}, Y'_{k_1+1}\cup Z'_{k_2+1},
\]
and these are $k_1+k_2=k-1$ sets with size $\lambda n$ and 1 set of size $m_1+m_2=m$, so that  $G^c$ contains a copy of $K_{\lambda n}^{k-1}\times K_m^c$, a contradiction.
\end{proof}


\subsection{Trees with few leaves in well-connected graphs}\label{sec:fewleaveswellconnected}
For the last case in the induction step for Theorem~\ref{theorem:main:1}, which we prove as Lemma~\ref{lemma:wellconnected}, we will have that the graph $G$ is well-connected, in that it has no partition $V(G)=V_0\cup V_1\cup V_2$ satisfying \itref{prop:nonwell3} (see \itref{prop:wellconn1}), as well as an additional expansion condition which will follow from our induction (see \itref{prop:wellconn2}), and the tree $T$ will have few leaves.  That is, we prove the following lemma.

\begin{restatable}{lemma}{lemmawellconnected}\label{lemma:wellconnected} Let $k\geq 3$ and $1/n\ll \eps \ll \lambda \ll 1/\Delta,1/k$. Let $m\leq \eps n$ and let $G$ be a graph with $(k-1)(n-1)+m$ vertices. 
Let $T$ be an $n$-vertex tree with fewer than $10\Delta^2\eps n$ leaves satisfying $\Delta(T)\leq \Delta$. Suppose the following properties hold.
\stepcounter{propcounter}
\begin{enumerate}[label = \textbf{\emph{\Alph{propcounter}\arabic{enumi}}}]
\item There is no partition $V(G)=V_0\cup V_1\cup V_2$ such that $e_G(V_1,V_2)=0$, $|V_1|,|V_2|\geq m$ and $|V_0|\leq \lambda n$.\label{prop:wellconn1}
\item For every set $U\subset V(G)$ with $|U|=m$, we have $|N_G(U)|\geq (1-\lambda)n$.\label{prop:wellconn2}
\end{enumerate}
Then, $G$ contains a copy of $T$ or $G^c$ contains a copy of $K^{k-1}_{\eps n}\times K_m^c$.
\end{restatable}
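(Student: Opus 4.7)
The plan is to assume $G^c$ contains no copy of $K^{k-1}_{\eps n}\times K^c_m$ and construct a copy of $T$ in $G$, combining the extendability-based embedding developed earlier with the Ramsey goodness of paths. Since $T$ has fewer than $10\Delta^2\eps n$ leaves, Lemma~\ref{lemma:leaves} supplies a collection $P_1,\ldots,P_r$ of $r=\Omega(n/\ell)$ vertex-disjoint bare paths in $T$ of length $\ell$, where $\ell$ is a constant chosen large relative to $1/\eps$, $\Delta$ and $k$ so that the Ramsey goodness of $P_\ell$ against $K^{k-1}_{\eps n}\times K^c_m$ can be invoked on subgraphs of $G$ of the appropriate size.

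First I would form a ``skeleton'' $T'$ by contracting each $P_i$ to a single edge (equivalently, deleting the interior vertices and marking the two attachment points), obtaining a tree on at most $(1-\Omega(1))n$ vertices with maximum degree still at most $\Delta$. Using Proposition~\ref{prop:expander:subgraphnew}, I would delete a set $W\subset V(G)$ with $|W|<m$ so that $G-W$ becomes a $(d,m)$-expander for a large parameter $d$ from the hierarchy, while retaining the $(m',\lambda n)$-joined property coming from \ref{prop:wellconn2}. Corollary~\ref{corollary:extendable:embedding} then lets me embed $T'$ into $G-W$ in a $(d,m)$-extendable fashion, leaving a reservoir $R$ of unused vertices of size roughly $r(\ell-1)+(m-1)$, with the attachment points mapped to prescribed locations adjacent to $R$.

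To close the embedding I need to realise each removed $P_i$ as an actual path of length $\ell$ in $G$ between the images of its attachment points, using only vertices of $R$. This is where the Ramsey goodness of paths enters: for $k\geq 4$, the theorem of Pokrovskiy and Sudakov cited in the introduction shows that $P_\ell$ is $(K^{k-1}_{\eps n}\times K^c_m)$-good once $\ell$ is sufficiently large, while for $k=3$ I would appeal to the $k=2$ case of Theorem~\ref{theorem:main:1} (Theorem~\ref{corollary:k=2}) to play the analogous role. Applied to a suitable induced subgraph on $R$ together with two attachment points, path-goodness yields either the required length-$\ell$ path in $G$ (which I insert to extend the embedding), or a copy of $K^{k-1}_{\eps n}\times K^c_m$ in the complement, finishing the argument. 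The well-connectedness hypothesis \ref{prop:wellconn1} is crucial here: it forbids the sparse cuts of $G$ that would otherwise separate an attachment point from the bulk of $R$, so the auxiliary subgraph really does satisfy the hypotheses of path-goodness.

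The main obstacle, and the reason the paper sketch warns that the eventual embedding ``cannot follow this sketch closely'', is that one cannot naively plug paths into a finished skeleton: each invocation of path-goodness consumes reservoir vertices and can damage the expansion and joinedness conditions needed for the next invocation. I expect the real proof to interleave skeleton embedding with bare-path insertion in stages of geometrically decreasing size, in the spirit of Section~\ref{section:k=2}, so that at each stage only a small fraction of the reservoir is touched and the conditions \ref{prop:wellconn1} and \ref{prop:wellconn2} propagate to the subgraph on which path-goodness is applied. Organising this staged coupling between the extendability embedding and the iterated application of path-goodness, while keeping track of which sides of a would-be partite structure any failures of path-goodness are detecting, is where I expect the bulk of the technical difficulty to lie.
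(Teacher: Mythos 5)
The central step of your plan — ``realise each removed $P_i$ as an actual path of length $\ell$ in $G$ between the images of its attachment points'' by invoking Ramsey goodness of paths on a subgraph of the reservoir $R$ — does not work as stated. Theorem~\ref{theorem:ramseygoodnessforpaths} produces a long path \emph{somewhere} in a graph whose complement is $H$-free; it gives no path between two \emph{prescribed} endpoints. Your final paragraph gestures at this difficulty by appealing to \ref{prop:wellconn1}, but does not supply the missing mechanism. The paper's fix is never to ask path-goodness for prescribed endpoints. It reserves a third random set $Z$ (Proposition~\ref{prop:findW}) in which, thanks to \ref{prop:wellconn1}, any two disjoint $m$-sets are joined by $\Omega(n)$ internally disjoint paths of a fixed constant length $\ell$. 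Each long bare path of $T$ is then realised by splicing a short $Z$-path onto one end of a free-endpoint path $Q_i$ of length $L-2\ell$ (obtained by path-goodness inside the untouched part of $V_1$), and another short $Z$-path onto the other end. The paths from Theorem~\ref{theorem:ramseygoodnessforpaths} are used with free endpoints; the well-connectedness of \ref{prop:wellconn1}, converted into the quantitative $Z$-connectivity, is what attaches those free endpoints to the right places. Without this splitting of each bare path into ``middle in $V_1$, ends in $Z$'', the plan has a genuine hole.

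A second structural divergence: you embed the entire contracted skeleton first and then try to insert all bare paths, and, correctly anticipating coupling problems, suggest patching this with a staged vortex-style scheme like Section~\ref{section:k=2}. The paper does not do this. It embeds $T$ vertex by vertex; at each bare path of length $L$ it \emph{tries} to route the whole path through $V_1\cup Z$ (step \textbf{a)}), and otherwise falls back to a single extendable extension into $V_0'$ (step \textbf{b)}). Crucially, only one endpoint ($\phi(s_j)$) of each routed path is prescribed; the other is chosen freely from the remaining part of $X$. The analysis then shows step \textbf{b)} is taken fewer than $n/50$ times (Claims~\ref{clm:prettyeasy}--\ref{clm:alsoprettyeasy}): otherwise at least $m$ attachment vertices could not be connected to the reservoir, contradicting $Z$-connectivity after locating $2m$ candidate segments $Q_1,\ldots,Q_{2m}$ via Theorem~\ref{theorem:ramseygoodnessforpaths}. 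This one-pass greedy process with roll-back is simpler than the vortex machinery and avoids the coupling issue you flag.

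A smaller point: your separate handling of $k=3$ via ``the $k=2$ case of Theorem~\ref{theorem:main:1}'' is misplaced. The target in the path-goodness application is $H=K^{k-1}_{\eps n}\times K_m^c$, which has $\chi(H)=k\geq 3$; Theorem~\ref{theorem:ramseygoodnessforpaths} as stated covers all $k\geq 2$, so no special case for $k=3$ is needed. The $k=2$ case of Theorem~\ref{theorem:main:1} concerns embedding arbitrary bounded-degree trees into graphs whose complement is $K_{m,\mu n}$-free; it is not the statement you would want here in any case.
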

The property \itref{prop:wellconn1} will imply that any two disjoint sets $U$ and $U'\subset V(G)$, each with size $m$, will have many vertex-disjoint short paths between them in $G$. If we pick a small but linear-sized (in $n$) random set $Z\subset V(G)$, then for any pair of such sets $U$ and $U'$, plenty of these paths all have internal vertices in $Z$. This we prove as Proposition~\ref{prop:findW}, where we do a little additional work so that the paths we can find all have the same prescribed length.

In Lemma~\ref{lemma:wellconnected}, we have that $k\geq 3$, so that $G$ contains many more vertices than $T$. As $T$ has few leaves, most of $T$ will consist of bare paths (paths whose internal vertices have degree 2 in the tree $T$) of some long, constant, length. We will choose a random partition $V(G)=Z\cup V_0\cup V_1$, with $|V_0|\geq (1-o(1))\cdot 2n/5$, $|V_1|\geq 11n/10$ and so that $Z$ has the connection property described above. We then embed the tree $T$ steadily into a large subgraph of $G[V_0]$, but, at any available opportunity, try to save vertices from $V_0$ by embedding a long bare path of $T$ using many vertices in $V_1$ and at most a small number of vertices from $Z$, so that only the last vertex embedded is in $V_0$ (see Figure~\ref{fig:wellcon}). We do this so that the embedded tree remains extendable in the large subgraph of $G[V_0]$ that we use. After this embedding (described precisely in \textbf{a)}--\textbf{c)} of the proof of Lemma~\ref{lemma:wellconnected}), we analyse it and show that enough of the long bare paths are embedded outside of $V_0$ so that we do not run out of room within $V_0$, and thus can successfully embed $T$. This is shown in Claims~\ref{clm:prettyeasy}--\ref{clm:alsoprettyeasy}, but, roughly speaking, if we had $2m$ opportunities to embed a long path in $T$ and did not do so outside of $V_0$, then let $U$ be the set of vertices to which we could have attached a long bare path to extend the embedding of $T$, and do the following. We find vertex-disjoint long paths $Q_1,\ldots,Q_{2m}$ in $G$ using vertices in $V_1$ which have not yet been used in the embedding, and show that under our embedding rules we should have embedded one of the paths $Q_i$ connected to a vertex in $U$ using the connectivity property of $Z$. 
To find the paths $Q_1,\ldots,Q_{2m}$, we only need a loose result for the Ramsey numbers of paths versus general graphs as the paths have constant length compared to $n$, but for convenience we will use the following result.

\begin{theorem}[Pokrovskiy and Sudakov~\cite{pokrovskiy2017ramsey}]\label{theorem:ramseygoodnessforpaths}Let $k\ge 2$ and let $H$ be a graph with $\chi(H)=k$. Then, for all $n\geq4|H|$, $R(P_n,H)=(k-1)(n-1)+\sigma(H)$.
\end{theorem}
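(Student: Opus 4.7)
The plan is to prove the matching upper and lower bounds separately; the lower bound is the standard Burr construction and the upper bound rests on a stability analysis of $P_n$-free red graphs.

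For the lower bound, I would colour the edges of $K_{(k-1)(n-1)+\sigma(H)-1}$ so that the vertices partition into $k-1$ red cliques of size $n-1$ together with one further red clique of size $\sigma(H)-1$, with all remaining edges blue. The red graph is a disjoint union of cliques of order at most $n-1$, so has no red $P_n$. The blue graph is complete multipartite with parts of sizes $n-1,\ldots,n-1,\sigma(H)-1$, and a blue copy of $H$ would yield a proper $k$-colouring of $H$ in which the smallest class has at most $\sigma(H)-1$ vertices, contradicting the definition of $\sigma(H)$.

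For the upper bound, let $N=(k-1)(n-1)+\sigma(H)$ and suppose the edges of $K_N$ are coloured red/blue with no red $P_n$. Let $R$ denote the red graph. By the Erd\H{o}s--Gallai theorem, $e(R)\le \tfrac{(n-2)N}{2}$, with equality only for disjoint unions of $K_{n-1}$'s, and there is an accompanying stability result: if $R$ is $P_n$-free and $e(R)\ge (1-\delta)\tfrac{(n-2)N}{2}$ for small $\delta=\delta(|H|)$, then $V(K_N)$ admits a partition $V_1\cup\cdots\cup V_{k-1}\cup V_0$ in which each $|V_i|=(1\pm\eta)(n-1)$ is almost a red clique, $|V_0|\le\eta N$ is small, and the blue bipartite graph between any two $V_i,V_j$ is almost complete. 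I would split on whether $R$ is close to extremal or not.

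In the extremal case I would exploit the enormous slack given by $n\ge 4|H|$: each $V_i$ has at least $(1-\eta)(n-1)\gg |H|$ vertices while $|H|\le n/4$, so after a greedy clean-up absorbing the $O(\eta N)$ bad vertices, I can embed an optimal $k$-colouring of $H$ by placing its largest $k-1$ colour classes one-per-part into the $V_i$'s and placing the smallest colour class (of size $\sigma(H)$) into a leftover set of size $\sigma(H)$ guaranteed by the count $N=(k-1)(n-1)+\sigma(H)$; the almost-complete blue bipartite structure between parts makes each successive class embeddable vertex-by-vertex, since its candidate set shrinks by at most $O(\eta n)<\sigma(H)$ after each placement. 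In the non-extremal case, $R$ is far from disjoint cliques, so a P\'osa-style rotation argument on a longest red path $Q=v_1\cdots v_\ell$ (with $\ell\le n-1$) either produces a longer red path — a contradiction — or creates a large set $B\subset V(Q)$ of pseudo-endpoints, each of which is blue-complete to $V\setminus V(Q)$. With $|B|$ and $|V\setminus V(Q)|$ both large, the blue graph between $B$ and $V\setminus V(Q)$ contains large complete bipartite subgraphs, into which any graph of chromatic number $k$ and order at most $n/4$ embeds by iteratively attaching colour classes.

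The main obstacle will be making the stability step sharp enough that the error term $\eta N$ consumed by bad vertices leaves at least $\sigma(H)$ usable spots in the smallest class after greedy clean-up — i.e.\ proving an Erd\H{o}s--Gallai stability result that is effective at the right scale. The condition $n\ge 4|H|$ is used precisely here: it yields the factor $4$ of slack needed to guarantee that each near-clique $V_i$ still has room after losing $O(\eta n)$ vertices to absorb a colour class of $H$, and it controls the rotation argument in the non-extremal case by bounding $|H|$ away from $|B|$.
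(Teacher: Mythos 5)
The paper does not prove this statement: it is quoted verbatim from Pokrovskiy and Sudakov and used as a black box in Section~\ref{sec:fewleaveswellconnected}, so there is no ``paper's own proof'' to compare against. That said, your sketch has two genuine gaps.

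First, the non-extremal branch does not work as written. After Pósa rotation you obtain a set $B$ of pseudo-endpoints with the bipartite blue graph between $B$ and $V\setminus V(Q)$ complete, and you conclude that ``the blue graph between $B$ and $V\setminus V(Q)$ contains large complete bipartite subgraphs, into which any graph of chromatic number $k$ \dots\ embeds.'' But a complete bipartite graph has chromatic number $2$, so it contains no $H$ once $\chi(H)=k\ge 3$ (take $H=K_3$). Completeness of the blue bipartite part between the two sides says nothing about blue edges \emph{inside} $B$ or inside $V\setminus V(Q)$, which is exactly where a third colour class of $H$ would have to go. To repair this one must recurse, typically inducting on $k$ after stripping off one blue part, which is a substantial extra step you have not provided.

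Second, in the extremal branch the arithmetic is too tight for the stability argument. An Erd\H{o}s--Gallai stability lemma will hand you a partition into near-red-cliques modulo an exceptional set of size $\eta N = \Omega(\eta n)$, where $\eta$ is fixed once the threshold $\delta$ separating the two cases is fixed. But $\sigma(H)$ can be as small as $1$, and the whole point of the exact value $N=(k-1)(n-1)+\sigma(H)$ is that there is \emph{no} slack of size $\Omega(n)$ to absorb bad vertices: after discarding $\eta N$ vertices you can no longer certify that a leftover set of exactly $\sigma(H)$ vertices survives for the smallest colour class. The factor $4$ in $n\ge 4|H|$ gives room to fit $H$ inside the near-cliques, but it does not make $\sigma(H)$ comparable to the stability error. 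You flag this as ``the main obstacle'' but leave it open, and it is precisely the place where the argument would have to depart from a generic stability-plus-cleanup scheme. The actual proof in Pokrovskiy and Sudakov avoids density/stability of the red graph altogether and works instead with a combinatorial partition lemma tailored to $P_n$-free graphs; reproducing the theorem at the stated quantitative strength genuinely seems to require an idea of that kind rather than off-the-shelf extremal stability.
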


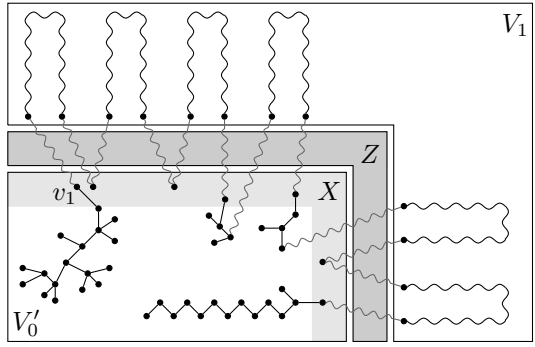
\begin{figure}
    \centering
      \begin{tikzpicture}[scale=.9]

\def\zpath{black!60}

\def\wid{10};
\def\hgt{5};

\draw [black!10,fill=black!10] (0,0.4*\hgt) rectangle (0.5*\wid,0.5*\hgt);
\draw [black!10,fill=black!10] (0.45*\wid,0) rectangle (0.5*\wid,0.5*\hgt);

%
\draw [black!20,fill=black!20] (0,0.52*\hgt) rectangle (0.56*\wid,0.62*\hgt);
\draw [black!20,fill=black!20] (0.51*\wid,0) rectangle (0.56*\wid,0.62*\hgt);

\draw [black] (0,0.52*\hgt) -- (0,0.62*\hgt) -- (0.56*\wid,0.62*\hgt) -- (0.56*\wid,0)  -- (0.51*\wid,0)  -- (0.51*\wid,0.52*\hgt) -- (0,0.52*\hgt);

\draw[black] (0,0) rectangle (0.5*\wid,0.5*\hgt);

\draw [black] (0,0.64*\hgt) -- (0,\hgt) -- (0.775*\wid,\hgt) -- (0.775*\wid,0) -- (0.57*\wid,0) -- (0.57*\wid,0.64*\hgt) -- (0,0.64*\hgt);


\draw (0.275,0.275) node {$V_0'$};
\draw (0.535*\wid,0.555*\hgt) node {$Z$};
\draw (0.475*\wid,0.445*\hgt) node {$X$};
\draw (0.75*\wid,0.935*\hgt) node {$V_1$};

\tikzstyle{every node}=[circle, draw, fill, inner sep=0pt, minimum width=2pt]

\draw node (new1) at (0.425*\wid,0.435*\hgt){};
\draw node (new2) at ($(new1)-(0,0.3)$){};
\draw node (new3) at ($(new2)-(0.2,0.2)$){};
\draw node (new4) at ($(new3)-(0,0.3)$){};
\draw node (new5) at ($(new3)-(0.3,0)$){};

\draw node (new6) at ($(new4)+(0.6,-0.2)$){};
\draw node (new7) at ($(new6)+(0,-0.6)$){};
\draw node (new8) at ($(new7)+(-0.4,0)$){};
\draw node (new9) at ($(new8)+(-0.2,0.2)$){};
\draw node (new10) at ($(new8)+(-0.2,-0.2)$){};

\draw (new1) -- (new2) -- (new3) -- (new4);
\draw (new3) -- (new5);
\draw (new10) -- (new8) -- (new9);
\draw (new8) -- (new7);

\foreach \x/\y/\z in {11/10/1,12/11/-1,13/12/1,14/13/-1,15/14/1,16/15/-1,17/16/1,18/17/-1,19/18/1,20/19/-1}
{
\draw node (new\x) at ($(new\y)+(-0.2,\z*0.2)$){};
}
\foreach \x/\y in {11/10,12/11,13/12,14/13,15/14,15/16,17/16,18/17,19/18,20/19}
{
\draw (new\x) -- (new\y);
}

\foreach \n in {0,1,...,3}
{
\draw node (A\n) at ($(0.3+1.2*\n,0.6*\hgt+0.325)$){};
\draw node (B\n) at ($(0.8+1.2*\n,0.6*\hgt+0.325)$){};
\coordinate (C\n) at ($(0.3+1.2*\n,0.9*\hgt+0.325)$){};
\coordinate (D\n) at ($(0.8+1.2*\n,0.9*\hgt+0.325)$){};
\draw[snake=coil,segment aspect=0, segment amplitude=.4mm,segment length=2.9mm] (A\n) -- (C\n);
\draw[snake=coil,segment aspect=0, segment amplitude=.4mm,segment length=2.9mm]  (C\n) -- (D\n);
\draw[snake=coil,segment aspect=0, segment amplitude=.4mm,segment length=2.9mm] (D\n) -- (B\n);
}

\begin{scope}[rotate=270,shift={(-2.3,2.65)}]
\foreach \n in {0,1}
{
\draw node (A1\n) at ($(0.3+1.2*\n,0.6*\hgt+0.2)$){};
\draw node (B1\n) at ($(0.8+1.2*\n,0.6*\hgt+0.2)$){};
\coordinate (C1\n) at ($(0.3+1.2*\n,0.9*\hgt+0.2)$){};
\coordinate (D1\n) at ($(0.8+1.2*\n,0.9*\hgt+0.2)$){};
\draw[snake=coil,segment aspect=0, segment amplitude=.4mm,segment length=2.9mm] (A1\n) -- (C1\n);
\draw[snake=coil,segment aspect=0, segment amplitude=.4mm,segment length=2.9mm]  (C1\n) -- (D1\n);
\draw[snake=coil,segment aspect=0, segment amplitude=.4mm,segment length=2.9mm] (D1\n) -- (B1\n);
}
\end{scope}

\begin{scope}[shift={(1.5,0.845)},scale=0.8]
\draw node (v1) at (-1,0){};
\draw node (v2) at (-1.2,.2){};
\draw node (v3) at (-.8,.4){};
\draw node (v4) at (-1.6,0){};
\draw node (v5) at (-1.6,.3){};
\draw node (v6) at (-1.2,-.2){};
\draw node (v7) at (-1,-.3){};
\draw node (v8) at (-.4,.2){};
\draw node (v9) at (0,.3){};
\draw node (v10) at (-.5,-.1){};
\draw node (v11) at (-.2,.0){};
\draw node (w1) at (-.5,.7){};
\draw node (w2) at (-.9,.9){};
\draw node (w3) at (-.6,1.8){};
\draw node (w4) at (.1,.8){};
\draw node (w5) at (-.2,1){};
\draw node (w6) at (-.2,1.4){};
\draw node (w7) at (.1,1.2){};

\draw node (z1) at (-.3,1.8){};
\draw node (z2) at (1.2,1.8){};

\tikzstyle{every node}=[]
\draw ($(w3)-(0.225,0.175)$) node {$v_1$};
\tikzstyle{every node}=[circle, draw, fill, inner sep=0pt, minimum width=2pt]

\draw[snake=coil,segment aspect=0, segment amplitude=.4mm,segment length=2mm,\zpath] (w3) -- (A0);
\draw[snake=coil,segment aspect=0, segment amplitude=.4mm,segment length=2mm,\zpath] (z1) -- (B0);
\draw[snake=coil,segment aspect=0, segment amplitude=.4mm,segment length=2mm,\zpath] (z1) -- (A1);
\draw[snake=coil,segment aspect=0, segment amplitude=.4mm,segment length=2mm,\zpath] (z2) -- (B1);
\draw[snake=coil,segment aspect=0, segment amplitude=.4mm,segment length=2mm,\zpath] (z2) -- (A2);
\draw[snake=coil,segment aspect=0, segment amplitude=.4mm,segment length=2mm,\zpath] (new1) -- (B3);
\draw[snake=coil,segment aspect=0, segment amplitude=.4mm,segment length=2mm,\zpath] (new4) -- (A10);
\draw[snake=coil,segment aspect=0, segment amplitude=.4mm,segment length=2mm,\zpath] (new7) -- (B11);
\draw[snake=coil,segment aspect=0, segment amplitude=.4mm,segment length=2mm,\zpath] (new6) -- (A11);
\draw[snake=coil,segment aspect=0, segment amplitude=.4mm,segment length=2mm,\zpath] (new6) -- (B10);

\draw (v3) -- (v1) -- (v2);
\draw (v2) -- (v4);
\draw (v2) -- (v5);
\draw (v6) -- (v1) -- (v7);
\draw (v9) -- (v8) -- (v10);
\draw (v3) -- (v8)-- (v11);
\draw (v3) -- (w1) -- (w2);
\draw (w1) -- (w5);
\draw (w6) -- (w5) -- (w7);
\draw (w3) -- (w6);
\draw (w5) -- (w4);
\end{scope}

\begin{scope}[shift={(2.25,2.1)},scale=0.8]
\draw node (u1) at (1.3,-.7){};
\draw node (u2) at (1.1,-.5){};
\draw node (u3) at (1,-.8){};
\draw node (u4) at (.9,-.3){};
\draw node (u5) at (1.2,0){};
\draw[snake=coil,segment aspect=0, segment amplitude=.4mm,segment length=2mm,\zpath] (u5) -- (B2);
\draw[snake=coil,segment aspect=0, segment amplitude=.4mm,segment length=2mm,\zpath] (u1) -- (A3);


\draw (u3) -- (u1) -- (u2);
\draw (u5) -- (u2) -- (u4);
\end{scope}
\end{tikzpicture}
    \caption{The embedding of $T$ in the proof of Lemma~\ref{lemma:wellconnected}. Starting with the extendable subgraph $I(X)$ in $G[V_0]$, in which a vertex of $T$ is embedded to $v_1$, we steadily embed $T$ into $G$ so that many of the long bare paths use vertices in $V_1$ along with some vertices in $Z$ to connect the path back into  $G[V_0]$ (the grey paths depicted have all their interior vertices in $Z$), and so that the union of $I(X)$ and the subgraph embedded in $V_0'$ remains extendable.}\label{fig:wellcon}
\end{figure}

We now prove a result showing that, in the above sketch, the random set $Z$ is likely to have the connectivity property we want.
\begin{proposition}\label{prop:findW} Let $k\geq 3$ and $1/n\ll \delta\ll 1/\ell\ll  \lambda \ll 1/k$. Let  $m\leq \delta n$ and let $G$ be a graph with $(k-1)(n-1)+m$ vertices such that \ref{prop:wellconn1} holds and $G^c$ contains no $K_{\delta n}^k$. 
Let $Z\subset V(G)$ be a set formed by including each element independently at random with probability $1/5$.

Then, with probability more than $2/3$, for any two disjoint sets $U,U'\subset V(G)$ with size $m$, there are at least $\delta n$ internally vertex-disjoint paths with length $\ell$ through $Z$ connecting $U$ and $U'$ in $G$.
\end{proposition}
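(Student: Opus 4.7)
The plan is to first establish, deterministically, that for every pair of disjoint $m$-sets $U, U' \subseteq V(G)$, the graph $G$ contains at least $M := 100\cdot 5^{\ell-1}\log(1/\delta)\cdot \delta n$ internally vertex-disjoint $U$-$U'$ paths of length exactly $\ell$, and then to use the randomness of $Z$ to extract a positive fraction of them. Given the deterministic claim, each such length-$\ell$ path has all $\ell - 1$ interior vertices in $Z$ with probability $(1/5)^{\ell-1}$, and these events are mutually independent across the $M$ internally vertex-disjoint paths. Chernoff gives that the number of retained paths is at least $\delta n$ except with probability at most $\exp(-\Omega(M\cdot 5^{-(\ell-1)})) = \exp(-\Omega(\delta n\log(1/\delta)))$, which handily beats $\binom{n}{m}^2 \leq (e/\delta)^{2\delta n}$, so a union bound over all pairs $(U, U')$ yields the conclusion with probability greater than $2/3$. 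The hierarchy $\delta \ll 1/\ell \ll \lambda$ is exactly what makes the parameter choices line up.

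For the deterministic claim, the key point is that \ref{prop:wellconn1} is stable under small vertex deletion: if $W \subseteq V(G)$ satisfies $|W| < \lambda n /2$, then any partition $V(G)\setminus W = V_0' \cup V_1 \cup V_2$ with $|V_0'| \leq \lambda n/2$, $|V_1|, |V_2| \geq m$, and no $G$-edges between $V_1$ and $V_2$ would combine with $W$ to give a forbidden partition $(V_0' \cup W, V_1, V_2)$ in $G$. Iterated BFS from $U$ in $G - W$ therefore obeys the following dichotomy at each step: either the next BFS layer has size $> \lambda n /2$, or the BFS has already covered all but fewer than $m$ vertices of $V(G)\setminus W$. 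Since $|V(G)| < kn$, the first alternative can hold for at most $2k/\lambda$ consecutive steps, so within $2k/\lambda + 1$ steps the BFS covers all but $<m$ vertices of $V(G)\setminus W$, and in particular reaches some vertex of $U'$, giving a $U$-$U'$ path of length at most $2k/\lambda + 1$ in $G - W$. Choosing $\ell \geq 2k/\lambda + 10$, I would iteratively find such short paths, adding their internal vertices to $W$; this process continues for at least $M$ iterations before $|W|$ would exceed $\lambda n /2$ (this uses $\delta\log(1/\delta) \ll \lambda/\ell$, valid under our hierarchy). Finally, to standardise each path to have length exactly $\ell$, I would pad any shorter path of length $j<\ell$ by replacing a chosen edge with a detour of length $\ell - j + 1$ through hitherto-unused vertices, found once more by a BFS argument with the same expansion inherited from \ref{prop:wellconn1}.

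The main obstacle is the final exact-length step: producing $\Omega(\lambda n/\ell)$ internally vertex-disjoint $U$-$U'$ paths of length at most $\ell$ is immediate from the BFS expansion, but forcing the length to be exactly $\ell$ requires the padding step, which needs careful book-keeping of the vertex budget to guarantee that each detour uses only fresh vertices. Note that the hypothesis ``$G^c$ contains no $K_{\delta n}^k$'' does not appear in my argument; it is presumably inherited from the ambient setting of Lemma~\ref{lemma:wellconnected}, where this proposition will be applied.
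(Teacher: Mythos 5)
Your structure tracks the paper's proof closely in two of its three stages: the robustness of \itref{prop:wellconn1} under deleting a set $W$ of size $<\lambda n/2$, combined with an iterated BFS/layer-counting argument to find many internally-vertex-disjoint short paths, is essentially the paper's argument (the paper takes a single maximal blocking set $Z_0$ where you iterate and augment $W$, but this is cosmetic); and the concluding Chernoff-plus-union-bound over all pairs $(U,U')$ is identical.

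The genuine gap is exactly at the point you flag as ``the main obstacle,'' and the difficulty is not book-keeping of the vertex budget but something more fundamental: the expansion coming from \itref{prop:wellconn1} guarantees \emph{short} paths, not paths of a \emph{prescribed} length, and ``pad by replacing an edge with a detour of length $\ell-j+1$'' is not something a BFS argument can deliver. Indeed, \itref{prop:wellconn1} alone cannot imply the deterministic claim you want. For example, $G=K_{N,N}$ with $N$ large satisfies \itref{prop:wellconn1} (any two sets with no edge between them both lie in one side, forcing $|V_0|$ to be large), yet if $U,U'$ both lie in the same side every $U$--$U'$ path has even length, so for odd $\ell$ there are \emph{no} paths of length exactly $\ell$. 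What rules out such graphs is precisely the hypothesis ``$G^c$ contains no $K^k_{\delta n}$'' (here $G^c$ is two disjoint $K_N$'s), and your observation that this hypothesis ``does not appear in my argument'' is therefore a symptom of the gap rather than evidence of generality. The paper closes the gap with a different device: among the many short paths, pick the maximal length $j\le\ell$ carrying at least $\lambda n/3^j\ell$ internally-vertex-disjoint paths; if $j<\ell$, consider the second vertices $x_i$ of these paths (the neighbours of the $U$-endpoints). If $G[\{x_i\}]$ has a matching of size $\lambda n/3^{j+1}\ell$ you can reroute to get that many paths of length $j+1$, contradicting maximality of $j$; if not, removing a maximal matching leaves an independent set in $G$ of size $\lambda n/3^{j+1}\ell\gg k\delta n$, i.e.\ a clique in $G^c$ containing $K^k_{\delta n}$, contradiction. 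Hence $j=\ell$. This clique/matching dichotomy, driven by the $K^k_{\delta n}$-freeness of $G^c$, is the missing idea.
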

\begin{proof}
First, let $U,U'\subset V(G)$ be disjoint sets and let $Z_0\subset V(G)\setminus (U\cup U')$ satisfy $|U|=|U'|=m$ and $|Z_0|<\lambda n/2$. We will show that there is a path from $U$ to $U'$ with interior vertices not in $Z_0\cup U\cup U'$, and with length at least $2$ and at most $\ell$.

Let $r=\lfloor \ell/2\rfloor$. Let $U_0=U$ and, iteratively, for each $1\leq i\leq r$, let $U_i=U_{i-1}\cup N_{G-Z_0-U'}(U_{i-1})$.
Now, the sets $U_i\setminus U_{i-1}$, $i\in [r]$, are disjoint, so there must be some $j\in [r]$ with $|U_j\setminus U_{j-1}|\leq kn/r<\lambda n/2-m$, as $1/\ell\ll \lambda,1/k$ and $m\leq \delta n\leq \lambda n/4$. Then, $|Z_0\cup U'\cup (U_j\setminus U_{j-1})|<\lambda n$, $|U_{j-1}|\geq |U_0|=m$, and there are no edges in $G$ between $U_{j-1}$ and $V(G)\setminus (Z_0\cup U'\cup U_j)$, so
by \itref{prop:wellconn1} we must have $|V(G)\setminus (Z_0\cup U'\cup U_j)|<m$. Thus $|U_j|\geq |G|-|Z_0|-2m\geq |G|-\lambda n/2-2m>|G|/2$, and therefore $|U_r|\geq |U_j|>|G|/2$.

Similarly, letting $U_0'=U'$ and iteratively taking $U'_i=U'_{i-1}\cup N_{G-Z_0-U}(U'_{i-1})$ for each $1\leq i\leq r$, we have that $|U'_r|>|G|/2$. Thus, $U_r\cap U'_r\neq\emptyset$, and therefore there
is a path from $U$ to $U'$ with length at least 2 and at most $\ell$ in $G-Z_0$.

Thus, by taking $Z_0$ to be the internal vertices of a set of maximal internally-vertex-disjoint paths from $U$ to $U'$ in $G$ with length at least $2$ and at most $\ell$, we see that $G$ contains at least $\lambda n/3\ell$ internally-vertex-disjoint paths from $U$ to $U'$ with length at most $\ell$ and at least 2. Let $2\leq j\leq \ell$ be maximal subject to the condition that there are at least $r_j:=\lambda n/3^{j}\ell$ internally-vertex-disjoint paths from $U$ to $U'$ with length $j$. Such a $j$ exists as $\sum_{j=2}^\ell r_j<\lambda n/3\ell$.
If $j<\ell$, then let $P_1,\ldots,P_{r_j}$ be such a set of paths, and, for each $i\in [j]$, let $x_i$ be the vertex of the path neighbouring the end-vertex in $U$ (which is not in $U'$ as $j\geq 2$).
Note that if $G[\{x_i:i\in [r_j]\}]$ contains a matching with $r_{j+1}$ edges, then we can use this matching along with $P_1,\ldots,P_{r_j}$ to get $r_{j+1}$ internally-vertex-disjoint paths from $U$ to $U'$ in $G$ with length $j+1$, a contradiction.
On the other hand, if $G[\{x_i:i\in [r_j]\}]$ contains no matching with $r_{j+1}$ edges, then, removing a maximal matching shows that $G^c[\{x_i:i\in [r_j]\}]$, and hence $G^c$, contains a copy of $K_{r_j-2r_{j+1}}$, and hence a copy of $K^k_{\delta n}$ as $\delta\ll 1/\ell,\lambda$, contradiction.
Thus, we must have $j=\ell$.

Therefore, altogether, we have shown that, for any disjoint sets $U,U'\subset V(G)$ of size $m$, there are at least $\lambda n/3^\ell\ell$ internally-vertex-disjoint paths from $U$ to $U'$ with length $\ell$. For any such path, the probability all its internal vertices lie in $Z$ is $(1/5)^{\ell-1}$. Thus, as $\delta\ll \lambda,\ell$, by Lemma~\ref{lemma:chernoff}, the probability that there are fewer than $\delta n$ internally vertex-disjoint paths with length $\ell$ through $Z$ connecting $U$ and $U'$ in $G$ is at most $2\exp(-\lambda n/15^{\ell+1}\ell)$. Therefore, the probability there are disjoint sets $U,U'\subset V(G)$ of size $m$, with fewer than $\delta n$ internally-vertex-disjoint paths from $U$ to $U'$ with length $\ell$ and internal vertices in $Z$ is at most
\[
\binom{kn}{m}^2\cdot 2\exp\left(-\frac{\lambda n}{15^{\ell+1}\ell}\right)\leq \left(\frac{ekn}{m}\right)^{2m} \cdot 2\exp\left(-\frac{\lambda n}{15^{\ell+1}\ell}\right)\leq
\left(\frac{ek}{\delta}\right)^{2\delta n} \cdot 2\exp\left(-\frac{\lambda n}{15^{\ell+1}\ell}\right)<1/3,
\]
as required, where the last inequality follows as $1/n\ll \delta \ll \ell,1/k$.
\end{proof}

Next, we give a simple result to show that, in the sketch above, the set $V_0$ will have the expansion property in $G$ that we want.

\begin{proposition}\label{prop:findV0} Let $k\geq 3$ and $1/n\ll \lambda \ll 1/k$. Let $m\leq \lambda n$ and let $G$ be a graph with $(k-1)(n-1)+m$ vertices in which \ref{prop:wellconn2} holds. Let $V_0\subset V(G)$ be a set formed by including each element independently at random with probability $1/5$.
Then, with probability more than $2/3$, for any set $U\subset V(G)$ with $|U|=m$, $|N_G(U,V_0)|\geq n/10$.
\end{proposition}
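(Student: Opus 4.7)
The plan is a standard Chernoff-plus-union-bound argument, mirroring the concentration arguments already used in the proofs of Claims~\ref{claim:three}--\ref{claim:two}. First fix a single set $U\subset V(G)$ with $|U|=m$. By the hypothesis \ref{prop:wellconn2}, $|N_G(U)|\geq(1-\lambda)n$, so each of these at least $(1-\lambda)n$ neighbours of $U$ lies in $V_0$ independently with probability $1/5$, giving
\[
\mathbb E|N_G(U,V_0)|\geq\frac{(1-\lambda)n}{5}\geq\frac{n}{6},
\]
where we used $\lambda\ll 1$. Apply Chernoff's bound (Lemma~\ref{lemma:chernoff}) with $\eps=2/5$ to the binomial random variable $|N_G(U,V_0)|$ to conclude
\[
\mathbb P\left(|N_G(U,V_0)|<\frac{n}{10}\right)\leq\mathbb P\left(\bigl||N_G(U,V_0)|-\mathbb E|N_G(U,V_0)|\bigr|>\frac{2}{5}\mathbb E|N_G(U,V_0)|\right)\leq 2\exp\!\left(-\frac{n}{200}\right).
\]

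Next take a union bound over all $m$-subsets of $V(G)$. Since $|V(G)|=(k-1)(n-1)+m\leq kn$, the number of such subsets is at most
\[
\binom{kn}{m}\leq\left(\frac{ekn}{m}\right)^{m}.
\]
The failure probability is then at most
\[
\left(\frac{ekn}{m}\right)^{m}\cdot 2\exp\!\left(-\frac{n}{200}\right)\leq 2\exp\!\left(m\log\frac{ek}{\lambda'}-\frac{n}{200}\right),
\]
for any $\lambda'$ with $m/n\le\lambda'$; in particular taking $\lambda'=\lambda$ (which is legitimate since $m\leq\lambda n$) and using $\lambda\ll 1/k$, we have $m\log(ek/\lambda)\leq\lambda n\log(ek/\lambda)\ll n/200$ because $\lambda\log(1/\lambda)\to 0$ as $\lambda\to 0$ and $\lambda\ll 1/k$. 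Hence the failure probability is at most $2\exp(-n/400)<1/3$ for $n$ sufficiently large (using $1/n\ll\lambda$), and so with probability more than $2/3$ every $m$-set $U$ satisfies $|N_G(U,V_0)|\geq n/10$, as required.

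There is no real obstacle here; the only bookkeeping point is checking that the hierarchy $1/n\ll\lambda\ll 1/k$ is strong enough to absorb the $\binom{kn}{m}$ factor into the exponential decay from Chernoff, which it is.
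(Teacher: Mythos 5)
Your approach matches the paper's exactly: Chernoff for a fixed $m$-set $U$ followed by a union bound over all $\binom{kn}{m}$ such sets, with the same intermediate quantities ($\mathbb{E}|N_G(U,V_0)|\geq n/6$, exponential of order $n$). There is, however, a sign error in the union-bound step. You claim that $\left(\frac{ekn}{m}\right)^{m}\leq\exp\bigl(m\log\frac{ek}{\lambda'}\bigr)$ for any $\lambda'$ with $m/n\leq\lambda'$, but this runs the wrong way: $m/n\leq\lambda'$ gives $n/m\geq 1/\lambda'$, hence $\frac{ekn}{m}\geq\frac{ek}{\lambda'}$ and so $\left(\frac{ekn}{m}\right)^m\geq\left(\frac{ek}{\lambda'}\right)^m$, not $\leq$. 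The correct route (and the one the paper takes) is to observe that $m\mapsto m\log\frac{ekn}{m}$ is increasing on $(0,kn)$, so $m\leq\lambda n$ yields $\left(\frac{ekn}{m}\right)^m\leq\left(\frac{ek}{\lambda}\right)^{\lambda n}$ in one step. Your final estimate $\lambda n\log(ek/\lambda)\ll n/200$, using $\lambda\ll 1/k$ and $\lambda\log(1/\lambda)\to 0$, is exactly what is needed afterwards, so the argument goes through once that one inequality is repaired; no new ideas are required.
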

\begin{proof} For each set $U\subset V(G)$ with $|U|=m$, we have, by \itref{prop:wellconn2}, that $\mathbb{E}|N_G(U,V_0)|\geq n/6$. Thus, by Lemma~\ref{lemma:chernoff}, the probability that there is some set $U\subset V(G)$ with $|U|=m$ and $|N_G(U,V_0)|< n/10$ is at most
\[
\binom{kn}{m}\cdot 2\exp\left(-\frac{n}{10^3}\right)\leq \left(\frac{ekn}{m}\right)^m\cdot 2\exp\left(-\frac{n}{10^3}\right)\leq \left(\frac{ek}{\lambda}\right)^{\lambda n}\cdot 2\exp\left(-\frac{n}{10^3}\right)<\frac{1}{3},
\]
as required, where we have used that $1/n\ll \lambda \ll 1/k$.
\end{proof}

Using Propositions~\ref{prop:findW} and~\ref{prop:findV0}, we can now prove Lemma~\ref{lemma:wellconnected} using the sketch above, as depicted in Figure~\ref{fig:wellcon}.

\begin{proof}[Proof of Lemma~\ref{lemma:wellconnected}] Let $L$, $\delta$ and $\ell$ satisfy $\eps\ll 1/L\ll \delta\ll 1/\ell\ll \lambda$, and let $d=4\Delta$.
Let $V(G)=Z\cup V_0\cup V_1$ be a partition chosen by selecting the set for each $v\in V(G)$ independently at random so that $\P(v\in Z)=\P(v\in V_0)=1/5$ and $\P(v\in V_1)=3/5$.

By a simple application of Lemma~\ref{lemma:chernoff}, as $|G|\geq 2n-1$ we have that, with probability greater than $2/3$, $|V_1|\geq 11n/10$. Therefore, by \itref{prop:wellconn1} and \itref{prop:wellconn2}, using Propositions~\ref{prop:findW} and~\ref{prop:findV0}, we can take a choice of partition $V(G)=W\cup V_0\cup V_1$ for which the following hold.

\stepcounter{propcounter}
\begin{enumerate}[label = \textbf{\Alph{propcounter}\arabic{enumi}}]
\item $|V_1|\geq 11n/10$.\label{prop:wellproof3}
\item For every pair of vertex disjoint sets $U_1,U_2\subset V(G)$ with size $m$, there are at least $\delta n$ internally vertex disjoint paths from $U_1$ to $U_2$ with length $\ell$ through $Z$.\label{prop:wellproof1}
\item For each $U\subset V(G)$ with $|U|=m$, $|N(U,V_0)|\geq n/10$.\label{prop:wellproof2}
\end{enumerate}
Now, let $X_0\subset V_0$ have size $n/20$ (possible by \ref{prop:wellproof2}), and note that, by \ref{prop:wellproof2}, for each $U\subset V(G)$ with $|U|=m$, $|N(U,V_0\setminus X_0)|\geq n/20$.
Then, similarly to the proof of Proposition~\ref{prop:expander:subgraphnewnew}, there is a set $W\subset V(G)$ with $|W|<m$ such that, for each $U\subset V(G)\setminus W$ with $|U|\leq m$, $|N_G(U,V_0\setminus (X_0\cup W)|\geq 3d|U|$. Let $V_0'=V_0\setminus W$ and $X=X_0\setminus W$. Then, by Proposition~\ref{prop:weakerextendability}, we have that $I(X)$ is $(d,m)$-extendable in $G[V_0']$. Let $n_0=|V_0'|-n/10$, and note that $G[V_0']$ is $(m,n_0)$-joined by \ref{prop:wellproof2} and
\begin{equation}\label{eqn:V0prime}
|V_0'|=n_0+n/10\geq n_0+|X|+(2d+2)m+n/40.
\end{equation}
Now, take $T$ and arbitrarily select a leaf $t_1$ of $T$. For each $i=2,\ldots,n$ in turn, if possible let $t_i$ be a neighbour of $t_{i-1}$ in $T$ which is not in $\{t_1,\ldots,t_{i-2}\}$, and otherwise, let $t_i$ be any vertex of $T$ not in $\{t_1,\ldots,t_{i-2}\}$ with a neighbour in $\{t_1,\ldots,t_{i-2}\}$.
For each $2\leq i\leq n$, let $s_i$ be the neighbour of $t_i$ in $T[\{t_1,\ldots,t_{i-1}\}]$.
We will build the embedding $\phi$ of $T$ by embedding $t_1,t_2,\ldots, t_n$ in this order. To initialise, set $I_a=\{1\}, I_b=\emptyset$ and $\phi(t_1)=v_1$ for an arbitrary vertex $v_1\in X$. Then, beginning with $j=2$, carry out the process whose step $j$ is as follows.

\begin{enumerate}[label = \textbf{\alph{enumi})}]
\item If $j\leq n-L$ and $t_j,\ldots,t_{j+L}$ all have degree 2 in $T$, and there is some $v_{j+L}\in X\setminus \phi(\{t_1,\ldots,t_{j-1}\})$ for which there is a $\phi(s_j),v_{j+L}$-path $\phi(s_j)v_j\ldots v_{j+L}$ in $G$ with internal vertices not in $V_0\cup \phi(\{t_1,\ldots,t_{j-1}\})$ and at most $2\ell$ vertices in $Z$, then let $\phi(t_i)=v_i$ for each $j\leq i\leq j+L$, and add $j+L$ to $I_a$.

If $j+L=n$ then stop, otherwise proceed to step $j+L+1$.
\item Otherwise, if $s_j$ satisfies $\phi(s_j)\in V_0'$, and there is some $v_j\in V_0'\setminus (X\cup\{t_1,\ldots,t_{j-1}\})$ such that $I(X)\cup \phi(T[\{t_i:i\in I_a\cup I_b\}])+\phi(s_j)v_j$ is $(d,m)$-extendable in $G[V'_0]$, then let $\phi(t_j)=v_j$ and add $j$ to $I_b$.

If $j=n$, then stop, otherwise proceed to step $j+1$.
\item If neither of these cases occur, then stop.
\end{enumerate}
Assume we are at the start of step $j$ of this process, note the following. If $j\leq n-L$ and $t_j,\ldots,t_{j+L}$ all have degree 2 in $T$, then by the ordering of the vertices in $T$, we have that $T[\{t_j,\ldots,t_{j+L}\}]$ is a path with length $L$. Also, $I_a$ is the set of indices $i\in[j-1]$ such that $\phi(t_i)\in X$, $I_b$ is the set of indices $i\in [j-1]$ such that $\phi(t_i)\in V'_0\setminus X$, and, if $i\in[j-1]$ and $t_i$ is adjacent to some $t_{j'}\in V(T)$ with $j'\geq j$, then $\phi(t_i)\in V'_0$ and $i\in I_a\cup I_b$.
We now further analyse this process via the following three claims and show that it will only stop at a step $j$ if $j+L=n$ or $j=n$, and that, in either case, this implies that we have embedded a copy of $T$ in $G$.

\begin{claim}\label{clm:prettyeasy}
If the process stops at step $j$ with $j\neq n$ and $j+L\neq n$, then $|I_b|\geq n/50$.
\end{claim}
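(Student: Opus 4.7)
The plan is to show the bound $|I_b| \geq n/50$ by identifying the precise obstruction that causes the process to stop. The process stops at step $j$ only when both case a) and case b) fail; I will argue that, under the assumptions $j \neq n$ and $j + L \neq n$, the failure of case b) must come from the size condition of Lemma~\ref{lemma:adding:leaf} being violated, and then extract the bound on $|I_b|$ from that.

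First, I would establish two invariants maintained throughout the process. The first is that $\phi(s_j) \in V_0'$ at every step $j$ the process reaches: this holds because the only vertices embedded outside $V_0'$ are the intermediate vertices $t_{j'}, \ldots, t_{j'+L-1}$ of earlier case a) applications, and each such vertex has degree $2$ in $T$ with both of its $T$-neighbours already in the embedding by the end of that earlier step, so none of them can serve as $s_j$ for a later step. The second invariant is that the subgraph $S := I(X) \cup \phi(T[\{t_i : i \in I_a \cup I_b\}])$ remains $(d,m)$-extendable in $G[V_0']$. This follows because $S$ is unchanged by case a) (the endpoint $t_{j+L}$ is mapped into $X \subset V(S)$, and its only $T$-edges, to $t_{j+L-1}$ and $t_{j+L+1}$, go to indices that are not in $I_a \cup I_b$) and case b) preserves extendability by design.

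With these invariants established, I note that $|V(S)| = |X| + |I_b|$ (since images of $I_a$-indices lie in $X$) and $d_S(\phi(s_j)) \leq \Delta \leq d/4 \leq d-1$. Now suppose the process stops at step $j$ with $j \ne n$ and $j+L \ne n$; in particular, case b) fails. Since $G[V_0']$ is $(m,n_0)$-joined and both the extendability and degree hypotheses of Lemma~\ref{lemma:adding:leaf} hold, the only way case b) can fail is if the size condition $|V_0'| \geq |V(S)| + (2d+2)m + n_0 + 1$ does not hold. Substituting $|V_0'| = n_0 + n/10$ then yields
\[
|I_b| \;>\; \tfrac{n}{10} - |X| - (2d+2)m - 1.
\]
Finally, plugging in $|X| \leq |X_0| = n/20$ together with $(2d+2)m \leq (8\Delta+2)\eps n \ll n$ (which is justified by $\eps \ll 1/\Delta$ and $m \le \eps n$) gives $|I_b| \geq n/40 - 1 \geq n/50$ for $n$ large, as required.

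The main obstacle is verifying the first invariant carefully; one needs to check how the degree-$2$ condition inside case a) interacts with the particular ordering $t_1, t_2, \ldots, t_n$, to make sure no intermediate vertex of a prior case a) block still has an unembedded $T$-neighbour that could later be selected as $t_j$. Once that is settled, the rest of the proof is a direct application of Lemma~\ref{lemma:adding:leaf} and a routine arithmetic estimate.
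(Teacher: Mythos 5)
Your proposal is correct and follows essentially the same route as the paper: both identify the two invariants ($\phi(s_j)\in V_0'$ and $(d,m)$-extendability of $S=I(X)\cup\phi(T[\{t_i:i\in I_a\cup I_b\}])$), both apply Lemma~\ref{lemma:adding:leaf} to the failure of step \textbf{b)}, and both extract the bound on $|I_b|$ from \eqref{eqn:V0prime} together with $|V(S)|=|X|+|I_b|$; the only cosmetic difference is that you run the argument in contrapositive form while the paper phrases it as a proof by contradiction.
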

\begin{proof}[Proof of Claim~\ref{clm:prettyeasy}] Suppose for a contradiction, that $|I_b|<n/50$. From the observations above, we have $\phi(s_j)\in V'_0$. Furthermore, using $|I(X)\cup\phi(T[\{t_i:i\in I_a\cup I_b\}])|=|X|+|I_b|$ and \eqref{eqn:V0prime}, we have
\[
|V'_0|\geq |I(X)\cup\phi(T[\{t_i:i\in I_a\cup I_b\}])|+(2d+2)m+n_0+1.
\]
As $I(X)\cup\phi(T[\{t_i:i\in I_a\cup I_b\}])$ is $(d,m)$-extendable in $G[V'_0]$, which is $(m,n_0)$-joined, and the degree of $\phi(s_i)$ is at most $\Delta-1$ in $I(X)\cup\phi(T[\{t_i:i\in I_a\cup I_b\}])$, by Lemma~\ref{lemma:adding:leaf}, there is $v_j\in V_0'\setminus (X\cup\{t_1,\ldots,t_{j-1}\})$ adjacent to $\phi(s_i)$, such that $I(X)\cup\phi(T[\{t_i:i\in I_a\cup I_b\}])+\phi(s_j)v_j$ is $(d,m)$-extendable in $G[V_0']$. This contradicts that the process stopped at step $j$ as we could have carried out \textbf{b)}.
\renewcommand{\qedsymbol}{$\boxdot$}
\end{proof}
\renewcommand{\qedsymbol}{$\square$}

\begin{claim} $|X\cap \phi(\{t_1,\ldots,t_{j-1}\})|\leq n/100$ and $|Z\cap \phi(\{t_1,\ldots,t_{j-1}\})|<\delta n/4$.\label{clm:easy}
\end{claim}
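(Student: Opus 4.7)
The plan is to count how many times step \textbf{a)} and step \textbf{b)} of the embedding process have been executed by the start of step $j$, and to track exactly how each of them contributes to $X \cap \phi(\{t_1,\ldots,t_{j-1}\})$ and to $Z \cap \phi(\{t_1,\ldots,t_{j-1}\})$. Let $N_a$ and $N_b$ denote the number of completed executions of \textbf{a)} and \textbf{b)} respectively before the start of step $j$.

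Each execution of \textbf{a)} embeds $L+1$ vertices $t_j,\ldots,t_{j+L}$ and advances the process to step $j+L+1$, while each execution of \textbf{b)} embeds exactly one vertex and advances by one. Since in total we have embedded $t_1$ and then $j-2$ further vertices at the start of step $j$, we have $(L+1)N_a + N_b = j-2 \le n-2$, and in particular $N_a \le n/(L+1) \le n/L$.

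Now, the only vertices embedded into $X$ are $v_1$ (from the initialisation) and the vertices $v_{j'+L}$ chosen during each execution of \textbf{a)} (since in \textbf{b)} we choose $v_j \in V_0' \setminus X$). Hence
\[
|X \cap \phi(\{t_1,\ldots,t_{j-1}\})| \le 1 + N_a \le 1 + \frac{n}{L} \le \frac{n}{100},
\]
using $1/L \ll 1$. Similarly, no vertex embedded at initialisation or in step \textbf{b)} lies in $Z$, and each execution of \textbf{a)} contributes at most $2\ell$ vertices of $Z$ (the bound on internal vertices of the connecting path in $Z$). Therefore
\[
|Z \cap \phi(\{t_1,\ldots,t_{j-1}\})| \le 2\ell \cdot N_a \le \frac{2\ell n}{L} < \frac{\delta n}{4},
\]
where the last inequality uses the hierarchy $1/L \ll \delta \ll 1/\ell$, which allows us to assume $L \ge 8\ell/\delta$.

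The only subtlety is to confirm the bookkeeping on $X$: the candidate vertex $v_{j+L}$ in step \textbf{a)} is explicitly required to lie in $X \setminus \phi(\{t_1,\ldots,t_{j-1}\})$, so it does contribute exactly one new vertex to $X \cap \phi(\ldots)$, while the intermediate vertices $v_j,\ldots,v_{j+L-1}$ are required to lie outside $V_0 \supseteq X$, so they contribute nothing. This is routine and the bounds above follow immediately.
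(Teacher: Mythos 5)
Your proof is correct and follows the same approach as the paper: both count executions of step \textbf{a)} (at most $\approx n/L$ of them since each consumes $L+1$ tree vertices), then observe that only step \textbf{a)} puts vertices into $X$ (exactly one per execution, plus the initial $v_1$) and into $Z$ (at most $2\ell$ per execution). The hierarchy $\eps\ll 1/L\ll\delta\ll 1/\ell$ makes $L$ large enough for both final numerical bounds, exactly as you note.
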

\begin{proof}[Proof of Claim~\ref{clm:easy}] Note that, for each $j\geq 2$, $\phi(t_j)\in X$ only if step $j-L$ is carried out via part \textbf{a)}, and exactly 1 vertex is embedded into $X$ in this step. Thus, $|X\cap \phi(\{t_1,\ldots,t_{j-1}\})|\leq 1+(n-1)/L\leq n/100$.
Similarly, the only vertices embedded into $Z$ are embedded via part \textbf{a)}, where at most $2\ell$ vertices are used each step. Thus, $|Z\cap \phi(\{t_1,\ldots,t_{j-1}\})|\leq 2\ell (n-1)/L<\delta n/4$.
\renewcommand{\qedsymbol}{$\boxdot$}
\end{proof}
\renewcommand{\qedsymbol}{$\square$}

\begin{claim} If $|I_b|\geq n/50$, then there are vertex disjoint sets $U_1,U_2\subset V'_0$ with size $m$ such that there is no path in $G$ from $U_1$ to $U_2$ of length $L$ with internal vertices not in $V'_0\cup \phi(\{t_1,\ldots,t_{j-1}\})$ and at most $2\ell$ vertices in $Z$.\label{clm:alsoprettyeasy}
\end{claim}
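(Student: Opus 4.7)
The plan is to find a large subset $I_b^{\ast} \subseteq I_b$ of indices at which step \textbf{a)} was genuinely considered but failed for lack of a valid path, form $U_1$ from the images $\phi(s_i)$ for $i \in I_b^{\ast}$, take $U_2$ from unused vertices of $X$, and then derive the claim from the failure of step \textbf{a)} at each such $i$.

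First, I would let $I_b^{\ast} = \{i \in I_b : i \le n-L \text{ and } t_i, \ldots, t_{i+L} \text{ all have degree } 2 \text{ in } T\}$. Since $T$ has at most $10\Delta^2\eps n$ leaves and a tree has at most as many vertices of degree $\ge 3$ as it has leaves, at most $20\Delta^2\eps n$ vertices of $T$ have degree $\ne 2$. Each such vertex excludes at most $L+1$ indices from $I_b^{\ast}$, so $\eps\ll1/L$ gives $|I_b \setminus I_b^{\ast}| \le n/100$, whence $|I_b^{\ast}| \ge n/100$. For every $i \in I_b^{\ast}$ the precondition of step \textbf{b)} forces $\phi(s_i) \in V_0'$, and since $\Delta(T)\le\Delta$ each vertex of $T$ is $s_i$ for at most $\Delta-1$ indices; hence $\{\phi(s_i) : i \in I_b^{\ast}\}$ contains at least $n/(100\Delta) \gg m$ distinct elements, and I take $U_1$ to be any $m$ of them. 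By Claim~\ref{clm:easy} and $|X| \ge n/20 - m$, I may select $U_2 \subseteq X \setminus \phi(\{t_1, \ldots, t_{j-1}\})$ of size $m$; then $U_1$ and $U_2$ are disjoint subsets of $V_0'$, each of size $m$.

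Finally, I would assume for a contradiction that a path $P$ of length $L$ exists from some $\phi(s_i) \in U_1$ (with $i \in I_b^{\ast}$) to some $u \in U_2$, with internal vertices not in $V_0' \cup \phi(\{t_1, \ldots, t_{j-1}\})$ and at most $2\ell$ of them in $Z$. Then $u \in X \setminus \phi(\{t_1,\ldots,t_{i-1}\})$ is a valid candidate for $v_{i+L}$ at step $i$, and the internal vertices of $P$ avoid $\phi(\{t_1,\ldots,t_{i-1}\}) \subseteq \phi(\{t_1,\ldots,t_{j-1}\})$. The main obstacle will be the discrepancy that the internal vertices of $P$ are allowed to lie in $W = V_0 \setminus V_0'$, whereas step \textbf{a)} forbids all of $V_0$; I would resolve this either by refining the choice of $U_1$ beforehand to exclude $\phi(s_i)$ whose $L$-step reach in $G$ is compromised by $W$ (feasible since $|W| < m$), or by rerouting each internal $W$-vertex of $P$ through fresh unused vertices of $V_1$. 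Either resolution promotes $P$ to a valid step-\textbf{a)} path at step $i$, contradicting $i \in I_b$.
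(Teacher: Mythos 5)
Your proof follows the same route as the paper's: you restrict to the subset of $I_b$ (the paper calls it $J$, you call it $I_b^{\ast}$) where $t_i,\ldots,t_{i+L}$ all have degree $2$, show it has size $\geq \Delta m$ by the standard leaf vs.\ branching-vertex count in a tree, take $U_1$ from $\phi(s_i)$ over this subset and $U_2\subset X\setminus\phi(\{t_1,\ldots,t_{j-1}\})$, and conclude that the existence of the forbidden path would have triggered step \textbf{a)} at some $i$, contradicting $i\in I_b$.

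The discrepancy you flag at the end --- step \textbf{a)} forbids internal vertices in $V_0$, whereas the claim only forbids $V_0'=V_0\setminus W$ --- is a genuine mismatch, and it affects the paper's own proof equally: a path whose internal vertices lie in $W\cap V_0$ avoids $V_0'\cup\phi(\cdot)$ yet would not have been eligible for step \textbf{a)}. However, neither of your proposed workarounds is needed, and both would be awkward to make rigorous (excluding ``compromised'' $\phi(s_i)$ is hard to formulate since $W$ can interact with many of them, and rerouting through $V_1$ changes lengths and needs availability guarantees). The clean fix is simply to state the claim with $V_0$ in place of $V_0'$: that is exactly what the step-\textbf{a)} argument proves, and it still suffices for the claim's later application, because the path $P_1 Q_i P_2$ constructed there has all internal vertices in $Z\cup V_1''$, which is disjoint from $V_0$ (and hence from $V_0'$) and has at most $2\ell$ vertices in $Z$, so the contradiction goes through unchanged.
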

\begin{proof}[Proof of Claim~\ref{clm:alsoprettyeasy}]
Suppose $|I_b|\geq n/50$. Let $J\subset I_b$ be the set of $j\in I_b$ such that $t_j,\ldots,t_{j+L}$ all have degree 2 in $T$. Suppose for a contradiction that $|J|<|I_b|/2$. For each $j\in J\setminus I_b$, there is some $j\leq i\leq {j+L}$ such that $t_i$ either has degree 1 or degree at least 3 in $T$, so in total there are at least $|I_b|/2(L+1)\geq n/100(L+1)$ such indices $i$. As $T$ has at most $10\Delta^2\eps n$ leaves and $\eps\ll 1/L$, $T$ must then have fewer leaves than vertices with degree at least 3. However,
\[
2(n-1)=2e(T)=\sum_{i\in [n]}d_T(t_i)\geq |\{i:d_T(t_i)=1\}|+2(n-|\{i:d_T(t_i)=1\}|)+|\{i:d_T(t_i)\geq 3\}|,
\]
so it follows that $T$ has at least as many leaves as vertices with degree at least 3, contradiction. Thus, $|J|\geq|I_b|/2\geq n/100\geq\Delta m$.

Let $U_1\subset \phi(\{s_i:i\in J\})$ have size $m$, which is possible as $|J|\geq\Delta m$ and let $U_2\subset X\setminus \phi(\{t_1,\ldots,t_{j-1}\})$ have size $m$ (possible by Claim~\ref{clm:easy}). Suppose there exists a path in $G$ with length $L$ connecting some $\phi(s_i)\in U_1$ with some vertex in $U_2$, such that all its internal vertices are not in $V_0'\cup\phi(\{t_1,\ldots,t_{j-1}\})$ and at most $2\ell$ of them are in $Z$, then we could have carried out step $i$ with \textbf{a)} to embed $t_i$. But as $i\in J\subset I_b$, we actually carried out step $i$ with \textbf{b)} instead, a contradiction. Hence, such a path does not exist, proving the claim.
\renewcommand{\qedsymbol}{$\boxdot$}
\end{proof}
\renewcommand{\qedsymbol}{$\square$}

Now, suppose for a contradiction that $|I_b|\geq n/50$. Using Claim~\ref{clm:alsoprettyeasy}, take vertex-disjoint sets $U_1,U_2\subset V'_0$ with size $m$ such that there is no path in $G$ from $U_1$ to $U_2$ of length $L$ with internal vertices not in $V'_0\cup \phi(\{t_1,\ldots,t_{j-1}\})$ and at most $2\ell$ of them in $Z$. Let $V_1''=V_1\setminus \phi(\{t_1,\ldots,t_{j-1}\})$, so that $|V_1''|\geq n/10$ by \ref{prop:wellproof3}.
By Theorem~\ref{theorem:ramseygoodnessforpaths}, as $G^c$, and hence $G^c[V_1'']$, contains no copy of $K_{\eps n}^{k-1}\times K_m^c$, there must be a path with length $2Lm$ in $G[V_1'']$. On this path, we can find vertex-disjoint paths $Q_1,\dots, Q_{2m}$ in $G[V_1'']$, each with length $L-2\ell$.
For each $i\in [2m]$, let $x_i$ and $y_i$ be the end vertices of $Q_i$.

Let $I_1\subset [2m]$ be the set of $i\in [2m]$ for which there are at least $\ell$ vertex-disjoint paths of length $\ell$ from $x_i$ to $U_1$ with internal vertices in $Z\setminus \phi(\{t_1,\ldots,t_{j-1}\})$. Then, there are at most $\ell\cdot |[2m]\setminus I_1|<\delta n/2$ vertex-disjoint paths of length $\ell$ from $\{x_i:i\in [2m]\setminus I_1\}$ to $U_1$ with internal vertices in $Z\setminus \phi(\{t_1,\ldots,t_{j-1}\})$. By Claim~\ref{clm:easy} and \ref{prop:wellproof1}, we must then have that $|[2m]\setminus I_1|<m$, so that $|I_1|\geq m+1$. Similarly, if $I_2\subset [2m]$ is the set of $i\in [2m]$ for which there are at least $\ell$ vertex-disjoint paths of length $\ell$ from $y_i$ to $U_2$ with internal vertices in $Z\setminus \phi(\{t_1,\ldots,t_{j-1}\})$, then $|I_2|\geq m+1$. Hence, there exists some $i\in I_1\cap I_2$. Let $P_1$ be a path of length $\ell$ from $x_i$ to $U_1$ with internal vertices in $Z\setminus\phi(\{t_1,\ldots,t_{j-1}\})$. Since there are at least $\ell$ vertex disjoint paths of length $\ell$ from $y_i$ to $U_2$ with internal vertices in $Z\setminus\phi(\{t_1,\ldots,t_{j-1}\})$, we can find one, say $P_2$, that is disjoint from $P_1$. Then $P_1, Q_i$ and $P_2$ attached together form a path in $G$ from $U_1$ to $U_2$ of length $L$ with internal vertices not in $V'_0\cup \phi(\{t_1,\ldots,t_{j-1}\})$ and at most $2\ell$ of them in $Z$, a contradiction.

Thus, we must have that $|I_b|<n/50$. Therefore, by Claim~\ref{clm:prettyeasy}, the process stops with $j=n$ or $j=n-L$, and in either case the process has then embedded a copy of $T$. Thus, $G$ contains a copy of $T$, as required.
\end{proof}

\subsection{Proof of Theorem~\ref{theorem:main:1}}\label{sec:finalproof}


Finally, using Lemmas~\ref{lemma:linearlymanyleaves},~\ref{lemma:nonwellconnected} and \ref{lemma:wellconnected}, we can prove Theorem~\ref{theorem:main:1} from Theorem~\ref{corollary:k=2} by induction.

\begin{proof}[Proof of Theorem~\ref{theorem:main:1}]
For each $\Delta$, the proof is by induction on $k\ge 2$. When $k=2$, we have shown that the required $\mu_{\Delta,2}$ exists by Theorem~\ref{corollary:k=2}, so let us assume that $k\ge 3$ and that the required $\mu_{\Delta,k'}$ exist for each $2\leq k'\leq k-1$. 
Let $\mu\leq\min\{\mu_{\Delta,k'}:2\leq k'\leq k-1\}$, $\mu\ll 1/\Delta,1/k$, and let $\eps$ and $\lambda$ satisfy $\eps\ll \lambda\ll \mu$. Note that we can assume that $1/n\ll \eps$, for if we have proved it for all $n\geq n_0$ with $1/n_0\ll \eps$, then we can reduce $\eps$ to $1/n_0$ to get a result for all $n$. Note that this choice of $\mu$ implies the following property.
\stepcounter{propcounter}
\begin{enumerate}[label = \textbf{{\Alph{propcounter}\arabic{enumi}}}]
\item For any $2\leq k'\leq k-1$ and $m'\leq \mu n$, $R(T,K_{\mu n}^{k'-1}\times K_{m'}^c)=(k'-1)(n-1)+m'$.\label{prop:finalproof1}
\end{enumerate}
We will show that we can take $\mu_{\Delta,k}=\eps$.

For this, let $m\leq \eps n$ and let $T$ be any $n$-vertex tree with $\Delta(T)\le \Delta$. Let $G$ be a graph on $(k-1)(n-1)+m$ vertices such that $G$ contains no copy of $T$ and $G^c$ contains no copy of $K_{\eps n}^{k-1}\times K_m^c$.
Similarly to the start of the proof of Lemma~\ref{lemma:nonwellconnected} (with \ref{prop:nonwell1}), we can assume we have the following property.
\begin{enumerate}[label = \textbf{{\Alph{propcounter}\arabic{enumi}}}]\stepcounter{enumi}
\item For every $U\subset V(G)$ with $|U|=m$, we have $|N_G(U)|\geq (1-\lambda)n$.\label{prop:finalproof2}
\end{enumerate}
Furthermore, if $U\subset V(G)$, $|U|=\eps n$ and $|U\cup N_G(U)|< n$, then we have $|G-U-N_G(U)|\geq (k-2)(n-1)+m$, and, thus, as $\eps \ll \mu$, the complement of $G-U-N_G(U)$ contains a copy of $K^{k-2}_{\eps n}\times K_m^c$, which, with all the edges from this to $U$, forms a copy of $K^{k-1}_{\eps n}\times K_m^c$ in $G^c$, a contradiction. Thus, we can assume that,

\begin{enumerate}[label = \textbf{{\Alph{propcounter}\arabic{enumi}}}]\addtocounter{enumi}{2}
\item For every $U\subset V(G)$ with $|U|=\eps n$, we have $|U\cup N_G(U)|\geq n$.\label{prop:finalproof3}
\end{enumerate}
Now, if $T$ has more than $10\Delta^2\eps n$ leaves, then, by \ref{prop:finalproof3} and Lemma~\ref{lemma:linearlymanyleaves}, $G$ contains a copy of $T$, a contradiction. Thus, we can assume that $T$ has fewer than  $10\Delta^2\eps n$ leaves. Therefore, by  \ref{prop:finalproof1} and Lemma~\ref{lemma:nonwellconnected}, there must be no
partition $V(G)=V_0\cup V_1\cup V_2$ such that $e_G(V_1,V_2)=0$, $|V_1|,|V_2|\geq m$ and $|V_0|\leq \lambda n$. Finally, then, by
\ref{prop:finalproof2} and Lemma~\ref{lemma:wellconnected}, $G$ contains a copy of $T$ or $G^c$ contains a copy of $K_{\eps n}^{k-1}\times K_m^c$, a contradiction.

Therefore, for every $m\leq \eps n$, every $n$-vertex tree $T$ with $\Delta(T)\le \Delta$, and every graph $G$ with $(k-1)(n-1)+m$ vertices,  $G$ contains a copy of $T$ or $G^c$ contains a copy of $K_{\eps n}^{k-1}\times K_m^c$, so that setting $\mu_{\Delta,k}=\eps$ completes the proof.
\end{proof}

\bibliographystyle{abbrv}
\bibliography{Ramseygood}

\end{document}